\documentclass[11pt,a4paper]{amsart}

\usepackage[margin=1in]{geometry}
\usepackage[T1]{fontenc}
\usepackage{lmodern}
\usepackage{amsmath,amssymb,mathtools,mathrsfs}
\usepackage{microtype}
\usepackage[hidelinks]{hyperref}
\hypersetup{
  pdftitle={Vopěnka's Principle without Choice: Preservation under Symmetric Extensions},
  pdfauthor={Tom de Groot and Wojciech Aleksander Woloszyn},
  pdfsubject={Set theory},
  pdfkeywords={Vopenka principle, symmetric extensions, axiom of choice}
}

\newcommand{\VP}{\mathsf{VP}}
\newcommand{\AC}{\mathsf{AC}}
\newcommand{\ZF}{\mathsf{ZF}}
\newcommand{\ZFC}{\mathsf{ZFC}}
\newcommand{\GBC}{\mathsf{GBC}}
\newcommand{\HS}{\mathsf{HS}}
\newcommand{\Ord}{\mathrm{Ord}}
\newcommand{\Def}{\operatorname{Def}}
\newcommand{\sym}{\operatorname{sym}}
\newcommand{\tc}{\operatorname{tc}}
\newcommand{\rank}{\operatorname{rank}}
\newcommand{\namerank}{\operatorname{rank}_{\mathrm{name}}}
\newcommand{\Con}{\operatorname{Con}}
\newcommand{\forcesS}{\Vdash_{\mathrm{sym}}}
\newcommand{\restr}{\mathbin{\upharpoonright}}
\newcommand{\HOD}{\operatorname{HOD}}
\newcommand{\Col}{\operatorname{Col}}
\newcommand{\Coll}{\operatorname{Coll}}
\newcommand{\Aut}{\operatorname{Aut}}
\newcommand{\DC}{\mathsf{DC}}
\newcommand{\LM}{\mathsf{LM}}
\newcommand{\BP}{\mathsf{BP}}
\newcommand{\PSP}{\mathsf{PSP}}
\newcommand{\RR}{\mathsf{RR}}
\newcommand{\crit}{\operatorname{crit}}
\newcommand{\cf}{\operatorname{cf}}
\newcommand{\UE}{\mathsf{UE}}

\theoremstyle{plain}
\newtheorem{theorem}{Theorem}[section]
\newtheorem{proposition}[theorem]{Proposition}
\newtheorem{lemma}[theorem]{Lemma}
\newtheorem{corollary}[theorem]{Corollary}
\newtheorem*{thmA}{Theorem A}
\newtheorem*{thmB}{Theorem B}
\theoremstyle{definition}
\newtheorem{definition}[theorem]{Definition}
\theoremstyle{remark}
\newtheorem{remark}[theorem]{Remark}
\newtheorem{question}[theorem]{Question}

\title[Vop\v{e}nka's Principle without Choice]{Vop\v{e}nka's Principle without Choice:\\ Preservation under Symmetric Extensions}
\author{Tom de Groot}
\author{Wojciech Aleksander Wo{\l}oszyn}
\date{}
\subjclass[2020]{Primary 03E35; Secondary 03E25, 03E55}
\keywords{Vop\v{e}nka's principle, symmetric extension, axiom of choice}

\begin{document}

\begin{abstract}

We prove that every set-sized symmetric extension of a model of \(\ZF+\VP\) again satisfies \(\ZF+\VP\), where \(\VP\) is formulated for arbitrary set-sized languages.  For each standard \(n\geq1\), the same construction preserves \(\VP(\Pi_n)\), allowing set parameters and arbitrary set-sized languages.  The theories \(\ZF+\VP\) and \(\ZFC+\VP\) are equiconsistent, and \(\ZF+\VP\) proves a proper class of L\"owenheim--Skolem cardinals.  Relative to \(\Con(\ZF+\VP)\), the axiom \(\DC\) is independent of \(\ZF+\VP+\neg\AC\), and there are Feferman--L\'evy and full Solovay models satisfying \(\VP\).  It remains open whether every countable model of \(\ZF+\VP\) has a class-generic extension satisfying \(\ZFC+\VP\).

\end{abstract}

\maketitle

\section{Introduction}

Vop\v{e}nka's principle asserts that every proper class of structures in a common set-sized language contains distinct members $M$ and $N$ with an elementary embedding $j:M\to N$.  Throughout, $\VP$ denotes the corresponding first-order scheme for classes definable with set parameters.  For each standard $n\geq1$, $\VP(\Pi_n)$ restricts the defining formula to $\Pi_n$.

Over $\ZFC$, Bagaria showed that $\VP$ is equivalent to the existence of a $C^{(n)}$-extendible cardinal for every $n$ \cite{BagariaCn}.  Far less has been published about $\VP$ without Choice.  Tzouvaras proved that a general downward-absoluteness principle for $\VP$ implies $\AC$ and asked whether $\AC$ is independent of $\ZF+\VP$ \cite[Question~4.1]{Tzouvaras}.  The following two theorems answer this question.

\begin{thmA}
Every set-sized symmetric extension of a model of $\ZF+\VP$ satisfies $\ZF+\VP$.  For every standard $n\geq1$, every set-sized symmetric extension of a model of $\ZF+\VP(\Pi_n)$ satisfies $\ZF+\VP(\Pi_n)$.
\end{thmA}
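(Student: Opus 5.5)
Let $V\models\ZF+\VP$, let $\langle\mathbb P,\mathcal G,\mathcal F\rangle$ be a symmetric system in $V$, $G$ a $V$-generic filter, and $N=V(G)_{\HS}$ the symmetric extension. That $N\models\ZF$ is standard. So the work is to verify $\VP$ in $N$ by pulling a definable proper class of structures in $N$ back to a definable proper class in $V$ and applying $\VP$ there.

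Fix in $N$ a set-sized language $\mathcal L$, a parameter $p\in N$, and a formula $\varphi$ such that $\mathcal C=\{M:\varphi(M,p)\}^N$ is a proper class of $\mathcal L$-structures. Choose hereditarily symmetric names $\dot{\mathcal L},\dot p$ and a condition $q\in G$ forcing (via $\forcesS$) that $\varphi(\cdot,\dot p)$ defines a proper class of $\dot{\mathcal L}$-structures. Work in $V$ and consider the class $\mathcal D$ of expanded structures
\[
A_{\dot M,r}=\bigl(\tc(\{\dot M,\dot p,\dot{\mathcal L},\mathbb P,\mathcal G,\mathcal F\}),\in,\dot M,\dot p,\dot{\mathcal L},\mathbb P,\mathcal G,\mathcal F,q,r\bigr),
\]
where $\dot M\in\HS$, $r\le q$, and $r\forcesS\varphi(\dot M,\dot p)$. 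This is a class definable in $V$ with set parameters in a fixed finite language, and it is proper: otherwise the names, and hence by ranks the interpreted structures in $\mathcal C$ forced by conditions in $G$, would be bounded in rank, contradicting properness of $\mathcal C$. For the $\VP(\Pi_n)$ clause, the forcing relation for $\Pi_n$ formulas is $\Pi_n$ definable in the ground model, for $n\ge1$, with parameters the system. Hence $\mathcal D$ is $\Pi_n$ there too.

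Next I want an embedding in $V$ that respects the forcing. Apply $\VP$ in $V$, but not directly to $\mathcal D$. A bare $\in$-embedding of transitive closures need not preserve the forcing facts about the interpreted structures. So refine $\mathcal D$ to structures $(V_\alpha,\in,\dots)$ with $\alpha$ chosen such that $V_\alpha$ correctly computes $\forcesS$ for the relevant formulas. This uses reflection for the $\Sigma_n$/$\Pi_n$-correct ordinals, and for full $\VP$ it uses a proper class of such ordinals. Also mark $\dot M$ as a constant. $\VP$ then gives distinct $A,B$ and an elementary $j:A\to B$. Because $\mathbb P,\mathcal G,\mathcal F,\dot p$ have rank below the parameters, a further refinement makes $j$ fix them pointwise. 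Concretely, include $V_\beta$ as a distinguished constant with all its elements named, where $\beta$ bounds these parameters; $j$ is the identity on $V_\beta$. Then elementarity gives that $j$ commutes with the forcing relation and with automorphisms in $\mathcal G$, and it maps symmetric names to symmetric names.

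Finally I descend to $N$. The map $j$ sends $\dot M$ to $\dot M'$, and $j$ induces a name-level map with $r\forcesS$ ``$\dot M\to\dot M'$ elementary via $\dot\jmath$'', where $\dot\jmath=\{(\mathrm{op}(\dot x,j(\dot x)),1)\}$. The name $\dot\jmath$ is symmetric because $j$ commutes with $\mathcal G$ and fixes its groups. The remaining issue is genericity: we need some pair with $r\in G$. I handle this by density. For any $s\le q$ and ordinal bound, the subclass of $\mathcal D$ with $r\le s$ and names of large rank is still proper. So $\VP$ yields a pair with conditions $r,r'\le s$; elementarity with $q,s$ named gives $j(r)=r$, and $r'=r$. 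Thus the set of conditions forcing ``there are distinct $M,N\in\mathcal C$ with an elementary embedding'' is dense below $q$, so it meets $G$. One also needs $\dot M^G\ne\dot M'^G$. I get this by coding ranks, so that distinct rank strata of the names force distinct interpretations.

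The main obstacle is this middle step: making the $V$-embedding preserve the symmetric forcing relation, not just the $\in$-structure. I would first try the correctness-ordinal refinement above, which is where the level $\Pi_n$ has to be tracked carefully so the $\VP(\Pi_n)$ clause goes through.
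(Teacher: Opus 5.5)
\textbf{Gap: distinctness of the two structures.} Most of your architecture agrees with the paper, namely:
\begin{itemize}
\item naming all of $V_\beta$ so that $j$ fixes the symmetric system pointwise;
\item the equivariance $j(\pi\cdot\sigma)=\pi\cdot j(\sigma)$;
\item a graph name with condition $1_{\mathbb P}$ over the $\Gamma$-invariant set $\HS\cap V_\alpha$.
\end{itemize}
These are exactly Proposition~\ref{prop:symmetric-lift}. Your density argument below $q$ is also an acceptable substitute for the paper's choice of $q\in G$ with $R_q$ unbounded.

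The genuine gap is the step you treat last, $\dot M^G\neq j(\dot M)^G$. $\VP$ only returns two distinct \emph{structures}, and your class $\mathcal D$ is indexed by individual names. For nontrivial forcings, many members of $\mathcal D$ lie in the same value-rank stratum and can even have the same interpretation. For instance, two different hereditarily symmetric names forced below $r$ to denote the same $\mathcal L$-structure give two distinct members of $\mathcal D$. Nothing stops $\VP$ from returning such a pair, and then you only get an elementary map from $M$ into itself, possibly the identity. Adding the forced rank $\gamma$ as a constant does not help, because $j(\gamma)=\gamma$ is entirely possible between two structures in the same stratum. For ``distinct structures'' to yield ``distinct ranks'', the class must contain exactly one structure per rank. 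That requires a definable assignment from ranks to structures, and in $\ZF$ you cannot select one name per rank.

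\textbf{The missing idea.} Fix one condition $q$ with $R_q$ unbounded; in your density framework, take such a condition below the given $s$. For each $\alpha\in R_q$, let $X_\alpha$ be the \emph{set} of all $\tau\in\HS$ of minimal $\namerank$ such that $q\forcesS$ ``$\tau$ is an $\dot L$-structure, $\varphi(\tau,\dot a)$, and $\rank(\tau)=\check\alpha$''. Put $X_\alpha$ in as a single constant in $\mathcal M_\alpha=\langle V_{\theta_\alpha},\in,\alpha,X_\alpha,(z)_{z\in V_\rho}\rangle$, with $\theta_\alpha$ canonically determined by $\alpha$. This makes $\alpha\mapsto\mathcal M_\alpha$ injective, so $\VP$ gives $\alpha\neq\beta$. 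For any $\tau\in X_\alpha$ we have $j(\tau)\in X_\beta$, and the two values have ranks $\alpha\neq\beta$.

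\textbf{The $\Pi_n$ clause.} The minimality of name rank, together with the leastness of $\theta_\alpha$, introduces $\Sigma_n$ and $\Sigma_1$ conjuncts into the defining formula. The paper restores a $\Pi_n$ definition by adding the coded witnesses $c_\alpha$ and $d_\alpha$ as further constants. Also, for $n=1$ your correctness ordinals must be $\Pi_1$-definable. The paper therefore uses the $\Pi_1$ criterion of Lemma~\ref{lem:ZF-rank-criterion} rather than $C^{(k)}$ for larger $k$. That criterion gives $V_\theta\models\ZF$ and hence an internal forcing theorem.
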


\begin{thmB}
The theories $\ZF+\VP$ and $\ZFC+\VP$ are equiconsistent.
\end{thmB}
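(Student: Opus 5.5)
The direction from $\ZFC+\VP$ to $\ZF+\VP$ is immediate, since a model of $\ZFC+\VP$ is already a model of $\ZF+\VP$. So the work lies in the other direction. Starting from $M\models\ZF+\VP$, I would build an inner model of $\ZFC+\VP$ inside $M$, and the natural candidate is $\HOD^M$. Theorem~A does not help here, because symmetric extensions only destroy choice. The abstract also says that recovering $\AC$ by a class-generic extension is open, so the route has to go through an inner model rather than forcing.

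The first step is to transfer large-cardinal content from $V$ to $\HOD$. Working in $V\models\ZF+\VP$, I would run Bagaria's argument without choice. For each $n$ and each set parameter, apply $\VP$ to the definable proper class of structures
\[
\langle V_\alpha,\in,\alpha,\HOD\cap V_\alpha,\{p\}\rangle, \qquad \alpha\in C^{(n)},\ \cf(\alpha)>\rank(p).
\]
The resulting elementary embeddings $j:V_\alpha\to V_\beta$ have a critical point, since $\alpha\neq\beta$. Because $\HOD\cap V_\alpha$ is part of the signature, each such $j$ maps $\HOD\cap V_\alpha$ elementarily into $\HOD\cap V_\beta$. This should yield a proper class of cardinals $\kappa$ that are $C^{(n)}$-extendible in the $\ZF$ sense. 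The witnesses are embeddings whose restrictions to $\HOD$ are elementary for the $\HOD$-structure, but the embeddings themselves are not yet known to lie in $\HOD$.

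The second step, and in my view the main obstacle, is to get the embeddings themselves into $\HOD$. In $\HOD$ a class $\mathcal C$ of structures is definable over $\HOD$ with parameters, hence OD-definable in $V$. I would apply $\VP$ in $V$ to an expanded class whose members carry, besides $A\in\mathcal C$, enough of $\HOD\cap V_\alpha$ and its canonical OD well-order. This makes the set of elementary embeddings between two members OD, so one can pick an OD, and hence $\HOD$-member, embedding. I see two ways to achieve this, and I would try them in this order:
\begin{enumerate}
\item Use elementarity of $j$ on a $C^{(n)}$ level. Suppose some embedding between members of $\mathcal C$ exists in $V$. Then the statement ``there exist $A\neq B$ in $\mathcal C\cap V_\alpha$ with an elementary embedding that is OD'' should reflect, because the OD well-order is in the signature.
\item Failing that, use absoluteness. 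Collapse $\tc(A)\cup\tc(B)$ to $\omega$ by $\Col(\omega,\cdot)$. Existence of an embedding between countable structures is absolute between $V[h]$ and $\HOD[h]$, by well-foundedness of the tree of finite partial elementary maps. Weak homogeneity of the collapse then brings a statement back down to $\HOD$, but only for a definable property of the pair $(A,B)$, not for the embedding itself.
\end{enumerate}
Route (2) therefore has to be combined with choosing the pair $A,B$ via the OD well-order, and then an embedding in $\HOD$ itself must be extracted. This extraction is exactly where I expect the difficulty.

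If the second step succeeds, the final step is bookkeeping. $\HOD\models\ZFC$ as usual, and each instance of $\VP$ in $\HOD$ follows from the corresponding instance in $V$, so $\HOD\models\ZFC+\VP$. Relativizing to a countable model then gives $\Con(\ZF+\VP)\Rightarrow\Con(\ZFC+\VP)$. Should the $\HOD$ route fail, the fallback is to work with $\VP(\Pi_n)$ one $n$ at a time and use compactness: it suffices to prove the consistency of $\ZFC+\VP(\Pi_n)$ for each standard $n$. For that, a single $C^{(n+1)}$-extendible-type cardinal from the first step, together with a direct verification inside $\HOD$ on a $C^{(n)}$ level, might be enough.
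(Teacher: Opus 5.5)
Your proposal has a genuine gap, and it is the step you flag yourself: nothing in your second step places an elementary embedding in $\HOD$.

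Adding $\HOD\cap V_\alpha$ or the canonical $\mathrm{OD}$ well-order to the signature only makes $j$ respect that structure. It does not make $j$, or $j\restr(\HOD\cap V_\alpha)$, ordinal definable. The set of embeddings between two $\mathrm{OD}$ structures is $\mathrm{OD}$, but a nonempty $\mathrm{OD}$ set need not contain an $\mathrm{OD}$ member, already under $\ZFC$. So there is nothing to ``pick.''

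Route (1) is circular: reflection only moves existing witnesses downward and cannot create an $\mathrm{OD}$ one. Route (2) yields at most that $\HOD$ forces, after a collapse, that an embedding exists. That does not give an embedding in $\HOD$: a dense linear order without endpoints of size $\aleph_1$ embeds elementarily into $(\mathbb Q,<)$ only after $\aleph_1$ is collapsed.

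More globally, large cardinals need not keep their large-cardinal properties in $\HOD$ (Cheng--Friedman--Hamkins), so there is no reason to expect $\HOD\models\VP$. Your fallback still rests on a verification ``inside $\HOD$'' and has the same defect. Your first step is also unjustified over $\ZF$. Without Choice, the Bagaria-style argument only gives Mohammd's dichotomy: unboundedly many $C^{(n)}$-extendible cardinals or unboundedly many rank-Berkeley cardinals.

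Your premise that the proof must go through an inner model is also mistaken. The open question concerns a single class-generic extension satisfying all of $\ZFC+\VP$. For equiconsistency it is enough to use a local set forcing for each finite fragment and then apply compactness. The paper proceeds as follows.
\begin{enumerate}
\item It uses G\"odel's second incompleteness theorem to get a model of $\ZF+\VP$ with no rank-Berkeley cardinals. This works because a rank-Berkeley cardinal yields a set model of $\ZF+\UE+\VP$. In that model, Mohammd's results give $\UE$.
\item If $\AC$ fails there, then for each finite $F\subseteq\ZFC+\VP$ it forces with Woodin's $\mathbb Q_\Lambda$ at a sufficiently correct $E_{t_N}$-cardinal $\Lambda$. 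It lifts reflected embeddings $J$ with $J(\mathbb Q_{\bar\theta})=\mathbb Q_\theta$ through the generic. This shows that $(V[G])_\Lambda\models\ZFC$ and has proper classes of $C^{(n)}$-extendible cardinals for $n\le N$. By Bagaria's characterization, $(V[G])_\Lambda$ therefore satisfies $F$.
\item Compactness finishes the argument.
\end{enumerate}
No inner model of $\ZFC+\VP$ is ever built.
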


Applied to the basic Cohen symmetric system, Theorem~A shows that $\Con(\ZF+\VP)$ implies $\Con(\ZF+\VP+\neg\AC)$, while Theorem~B shows that $\Con(\ZF+\VP)$ implies $\Con(\ZFC+\VP)$.  Hence $\AC$ is independent of $\ZF+\VP$, relative to $\Con(\ZF+\VP)$.

Brooke-Taylor proved over $\ZFC$ that set forcing preserves $\VP$ for structures in a fixed finite language \cite[Lemma~23 and Theorem~24]{BrookeTaylor}.  Hayut and Karagila developed criteria for lifting elementary embeddings through symmetric extensions \cite[Theorems~4.7 and~4.9]{HayutKaragila}.  Such a lift may exist in the generic extension while neither it nor its restriction to the two structures belongs to the symmetric extension.  Our proof constructs in the symmetric extension the graph of that restriction.

Instead of choosing one name at each rank, the proof uses the set of all names of least possible rank.  Applying $\VP$ to rank structures with these sets as distinguished constants gives an embedding that fixes the symmetric system.  Proposition~\ref{prop:symmetric-lift} then places the required restriction of its lift in the symmetric extension.

For Theorem~B, we first obtain a model with unbounded $C^{(n)}$-extendible cardinals for every standard $n$.  Woodin's forcing at a sufficiently correct choiceless supercompact cardinal gives a $\ZFC$ rank model satisfying any prescribed finite subset of $\ZFC+\VP$.  Compactness finishes the proof.

Sections~2 and~3 prove the lifting and preservation theorems.  Section~4 derives a proper class of L\"owenheim--Skolem cardinals, and Section~5 proves Theorem~B.  Sections~6--8 apply Theorem~A to the Cohen, Feferman--L\'evy, and full Solovay constructions.  Section~9 asks whether Choice can instead be restored by class forcing over a given countable model while preserving $\VP$.

All forcing notions have a largest condition $1_{\mathbb P}$.  We write $\namerank(\tau)$ for the ground-model rank of a forcing name $\tau$ and $f``X$ for the direct image of $X$ under $f$.  We code structures so that the universe of a structure is contained in its transitive closure.

\section{Symmetric models and the lifting lemma}

Let $\mathscr S=\langle\mathbb P,\Gamma,\mathcal F\rangle$ be a symmetric system, where $\Gamma$ is a group of automorphisms of $\mathbb P$ and $\mathcal F$ is a normal filter of subgroups of $\Gamma$.  We write $\pi p$ for the action of $\pi$ on a condition $p$ and $\pi\cdot\sigma$ for the induced recursive action on a name $\sigma$.  The latter is defined by
$\pi\cdot\sigma=\{\langle\pi\cdot\tau,\pi p\rangle:\langle\tau,p\rangle\in\sigma\}$.  Put $\sym(\sigma)=\{\pi\in\Gamma:\pi\cdot\sigma=\sigma\}$.  A name is symmetric if its stabilizer belongs to $\mathcal F$.  For a $\mathbb P$-name $\sigma$, let $\operatorname{cl}(\sigma)$ be the least set of names $X$ with $\sigma\in X$ such that $\upsilon\in X$ whenever $\langle\upsilon,p\rangle\in\tau$ for some $\tau\in X$.  A name $\sigma$ is hereditarily symmetric if
\begin{equation}\label{eq:HS-characterization}
\sym(\tau)\in\mathcal F\quad\text{for every }\tau\in\operatorname{cl}(\sigma).
\end{equation}
Let $\HS$ denote the class of hereditarily symmetric names.  Since $\sigma\in\operatorname{cl}(\sigma)$, every hereditarily symmetric name is symmetric.  We write $\forcesS$ for the symmetric forcing relation.  We use its definability, symmetry, and truth lemmas; see \cite[Chapter~5]{Jech}, \cite[Section~2]{Karagila}, and {\cite[Lemma~2.2 and Theorem~2.5]{HayutKaragilaSmall}}.  The action on names and the operation $\sigma\mapsto\operatorname{cl}(\sigma)$ are uniformly first-order definable by well-founded recursion.

We shall repeatedly use the following restriction lemma.  For a transitive set $a$, write $\operatorname{Sat}(a)$ for the canonical satisfaction relation of $(a,\in)$.

\begin{lemma}\label{lem:restriction-elementarity}
Let $j:M\to N$ be elementary between transitive sets, and let $a\in M$ be transitive.  If $\operatorname{Sat}(a)\in M$, then $j\restr a:(a,\in)\longrightarrow(j(a),\in)$ is elementary.
\end{lemma}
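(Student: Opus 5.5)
The idea is to use $\operatorname{Sat}(a)$ as an internal certificate of truth in $(a,\in)$, and to transport it to $N$ via $j$. Then check that its image is the genuine satisfaction relation of $(j(a),\in)$. We code $\operatorname{Sat}(a)$ as the set of pairs $\langle\varphi,s\rangle$ with $\varphi$ a formula (an element of $\omega$) and $s$ a finite assignment of variables into $a$, such that $(a,\in)\models\varphi[s]$. This set is characterized by Tarski's recursive clauses, which form a $\Delta_0$ condition in the parameters $a$, $\omega$ and the set itself. Writing $\Theta(S,a)$ for the formula ``$S$ is a set of pairs $\langle\varphi,s\rangle$ with $\varphi\in\mathrm{Fml}$ and $s$ a finite assignment into $a$, and $S$ obeys the Tarski clauses for atomic formulas, negation, conjunction and the existential quantifier over $a$'', we get $M\models\Theta(\operatorname{Sat}(a),a)$. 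The hypothesis $\operatorname{Sat}(a)\in M$ is exactly what makes this a statement about an element of $M$.

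By elementarity, $N\models\Theta(j(\operatorname{Sat}(a)),j(a))$. Since $\Theta$ is $\Delta_0$ and $N$ is transitive, $\Theta(j(\operatorname{Sat}(a)),j(a))$ holds in $V$. A routine induction on formulas shows that any $S$ satisfying $\Theta(S,b)$ for a transitive set $b$ is the true satisfaction relation of $(b,\in)$. The quantifier clause only quantifies over $b$, and the atomic clauses use the real $\in$, so $j(\operatorname{Sat}(a))=\operatorname{Sat}(j(a))$.

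We need two facts about how $j$ acts on formulas and assignments.
\begin{itemize}
\item $j(n)=n$ for $n\in\omega$, and $j(\omega)=\omega$, by induction using elementarity and transitivity. Hence $j$ fixes every formula.
\item For a finite assignment $s:k\to a$ we have $j(s)=j\circ s$, because $j(s)$ is a function with domain $j(k)=k$ and $j(s)(i)=j(s(i))$.
\end{itemize}

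Now fix a formula $\varphi$ and an assignment $s$ into $a$. Membership is preserved by elementarity, so
\[
\langle\varphi,s\rangle\in\operatorname{Sat}(a)\iff\langle j(\varphi),j(s)\rangle\in j(\operatorname{Sat}(a)).
\]
By the two facts above, the right-hand pair is $\langle\varphi,j\circ s\rangle$, and $j(\operatorname{Sat}(a))=\operatorname{Sat}(j(a))$. Thus $(a,\in)\models\varphi[s]$ if and only if $(j(a),\in)\models\varphi[j\circ s]$. This is precisely the elementarity of $j\restr a$; note that $j\restr a$ maps $a$ into $j(a)$ because $x\in a$ implies $j(x)\in j(a)$.

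The main obstacle is the absoluteness step: confirming that $j(\operatorname{Sat}(a))$ really is the satisfaction relation of $j(a)$ and not just something $N$ believes is one. The plan is to keep the Tarski clauses $\Delta_0$ in the parameters, so that transitivity of $N$ suffices and no well-foundedness or recursion inside $N$ is needed. A secondary point is that the coding of formulas and finite sequences must be absolute and fixed by $j$. This holds because the coding lives in $V_\omega$ (or uses finite functions from $\omega$), and $j$ is the identity there.
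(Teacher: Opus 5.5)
Your proposal is correct and follows the paper's proof. The paper likewise obtains $j(\operatorname{Sat}(a))=\operatorname{Sat}(j(a))$ from the fact that satisfaction is uniquely determined by the Tarski recursion, and then reads off the elementarity of $j\restr a$ from that of $j$; you additionally make explicit why the image is the true satisfaction relation and why $j(\varphi)=\varphi$ and $j(s)=j\circ s$. One claim needs tightening: the Tarski clauses are not $\Delta_0$ in $a$, $\omega$ and $S$ alone, because the universal quantifier over finite assignments is not bounded by those parameters. Bound that quantifier by the assignments occurring in $S$ (or in an added set of assignments), and add a $\Delta_0$ closure clause guaranteeing that every finite assignment into the structure occurs; with that change your transitivity argument goes through.
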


\begin{proof}
The satisfaction relation is uniquely defined by recursion on formulas, so
$j(\operatorname{Sat}(a))=\operatorname{Sat}(j(a))$.  Elementarity of $j$ now gives, for every formula $\varphi$ and every finite tuple $\bar x$ from $a$,
\[
(a,\in)\models\varphi(\bar x)
\quad\text{if and only if}\quad
(j(a),\in)\models\varphi(j(\bar x)).\qedhere
\]
\end{proof}

The usual symmetric-model theorem holds over a ground model of $\ZF$.  The Power Set and Collection arguments are recorded because they require care without Choice.

\begin{proposition}\label{prop:symmetric-ZF}
Suppose that $V\models\ZF$, that $\mathscr S$ is set-sized, and that $G\subseteq\mathbb P$ is $V$-generic.  Then
$W=\HS^G=\{\sigma^G:\sigma\in\HS\}$ is a transitive model of $\ZF$ with $V\subseteq W\subseteq V[G]$.
\end{proposition}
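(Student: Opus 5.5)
The plan follows the standard proof (Jech, Chapter 15), with care where Choice is usually invoked in the ground model. First, $W$ is transitive: if $\sigma\in\HS$ and $x\in\sigma^G$, then $x=\tau^G$ for some $\langle\tau,p\rangle\in\sigma$ with $p\in G$, and $\tau\in\operatorname{cl}(\sigma)$. Since $\operatorname{cl}(\tau)\subseteq\operatorname{cl}(\sigma)$, condition~\eqref{eq:HS-characterization} gives $\tau\in\HS$. For $V\subseteq W$, the canonical names $\check x$ are fixed by every $\pi\in\Gamma$, because $\pi 1_{\mathbb P}=1_{\mathbb P}$. Their closures consist of canonical names, so $\check x\in\HS$. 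Clearly $W\subseteq V[G]$, and $\Ord\cap W=\Ord$.

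The structural axioms come next. Extensionality and Foundation hold because $W$ is transitive. For Pairing and Union I use names built from names. For pairing take $\{\langle\sigma,1\rangle,\langle\tau,1\rangle\}$, which is fixed by $\sym(\sigma)\cap\sym(\tau)\in\mathcal F$. For union take $\{\langle\upsilon,r\rangle:\exists\langle\tau,q\rangle\in\sigma\,\exists\langle\upsilon,p\rangle\in\tau,\ r\le p,q\}$. This is fixed by $\sym(\sigma)$, since $\pi$ permutes the pairs of $\sigma$ and of the names in them. Its closure lies in $\operatorname{cl}(\sigma)$ together with the name itself. Infinity holds since $\omega\in V\subseteq W$.

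For Separation, given $\sigma\in\HS$, a formula $\varphi$ and parameters $\bar\tau\in\HS$, set
\[
\rho=\{\langle\upsilon,p\rangle:\exists q\,(\langle\upsilon,q\rangle\in\sigma),\ p\Vdash_{\mathrm{sym}}\upsilon\in\sigma\wedge\varphi(\upsilon,\bar\tau)\}.
\]
This uses definability of $\forcesS$ in $V$ and the truth lemma. The symmetry lemma $p\forcesS\psi(\bar x)\iff\pi p\forcesS\psi(\pi\cdot\bar x)$ shows that $\sym(\sigma)\cap\bigcap\sym(\tau_i)\subseteq\sym(\rho)$. Filters are upward closed, so $\rho\in\HS$.

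Power Set is the first point needing care without Choice. For $\sigma\in\HS$, let $A$ be the set of $\upsilon\in\HS$ with $\upsilon\subseteq\{\langle\tau,p\rangle:\tau\in\operatorname{dom}\sigma,\ p\in\mathbb P\}$. This is a set by Separation in $V$, with no choice of representatives needed. Given $x\in W$ with $x\subseteq\sigma^G$, say $x=\mu^G$, replace $\mu$ by
\[
\mu'=\{\langle\tau,p\rangle:\tau\in\operatorname{dom}\sigma,\ p\forcesS\tau\in\mu\}.
\]
Then $\mu'\in\HS$, because $\sym(\sigma)\cap\sym(\mu)\subseteq\sym(\mu')$ and $\operatorname{dom}\sigma\subseteq\HS$. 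The truth lemma gives $\mu'^G=x$. Thus $\{\langle\upsilon,1\rangle:\upsilon\in A\}$ is a name fixed by every $\pi$, because $A$ is $\Gamma$-invariant. It lies in $\HS$, and its interpretation contains $\mathcal P(\sigma^G)\cap W$; Separation then finishes. It is important that $A$ is closed under the action and is defined without any choices.

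Collection (hence Replacement) is the main obstacle. Classically one chooses one witness name per element, which is unavailable here. Instead I use Scott-style rank cutting in $V$. Suppose $W\models\forall x\in\sigma^G\,\exists y\,\varphi(x,y,\bar\tau^G)$. For each pair $\langle\upsilon,p\rangle$ with $\upsilon\in\operatorname{dom}\sigma$ and each $q\le p$ such that $q\forcesS\exists y\,\varphi(\upsilon,y,\bar\tau)$, let $\alpha_{\upsilon,q}$ be the least $\alpha$ with some $\theta\in\HS\cap V_\alpha$ and $q\forcesS\varphi(\upsilon,\theta,\bar\tau)$. A density argument shows that this least $\alpha$ exists. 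Because $\mathscr S$ is set-sized, Replacement in $V$ gives a bound $\beta$ on all such $\alpha_{\upsilon,q}$. Let $\rho=\{\langle\theta,1\rangle:\theta\in\HS\cap V_\beta\}$. This name is fixed by every $\pi\in\Gamma$, since $\HS$ and $V_\beta$ are $\Gamma$-invariant (the filter is normal and the action preserves rank). Hence $\rho\in\HS$, and $\rho^G$ is a collecting set. The remaining verification uses density below any $p\in G$ forcing $\upsilon\in\sigma$, and it is routine.
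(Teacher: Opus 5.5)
Your Power Set, Separation and structural steps are essentially the paper's, and your Collection argument takes a different route. One step in Collection fails as written, and two invariance claims need correcting.

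\textbf{The step that fails.} In Collection you write ``a density argument shows that this least $\alpha$ exists.'' From $q\forcesS\exists y\,\varphi(\upsilon,y,\bar\tau)$, the forcing clause for $\exists$ only gives that the conditions $r\le q$ admitting some $\theta\in\HS$ with $r\forcesS\varphi(\upsilon,\theta,\bar\tau)$ are dense below $q$. A single witness at $q$ itself is the maximum principle. Its proof mixes chosen witnesses along an antichain, and in $\ZF$ it can fail. For example, take trivial symmetries, a family $\langle X_n:n<\omega\rangle$ of nonempty sets with no choice function, and the poset of pairwise incompatible atoms $a_n$ below $1_{\mathbb P}$. If $1_{\mathbb P}$ forced $\theta$ into $X_n$ for the generically selected $n$, then $n\mapsto\theta^{G_n}$, with $G_n=\{1_{\mathbb P},a_n\}$, would be a choice function in $V$.

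The repair is to define $\alpha_{\upsilon,q}$ only for those $q$ that admit a witness; Replacement still bounds these ordinals. Your final density verification then finds such a $q\in G$ below any $r\in G$ that extends both the condition forcing the $\forall\exists$ hypothesis and some $s$ with $\langle\upsilon,s\rangle\in\sigma$. The paper builds this in: for each triple $\langle\rho,s,r\rangle\in I$ it collects a witness at some $t\le r$, not at $r$.

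\textbf{The invariance claims.} First, the action does not preserve von Neumann rank, because an automorphism may send a condition of small rank to one of large rank. So $\HS\cap V_\beta$ need not be $\Gamma$-invariant. Either bound instead the recursive rank $\varrho(\theta)=\sup\{\varrho(\tau)+1:\tau\in\operatorname{dom}\theta\}$, which the action preserves and whose initial segments are sets. Or replace $\HS\cap V_\beta$ by its $\Gamma$-orbit, which is a set since $\Gamma$ is, and lies in $\HS$ by normality.

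Second, in Power Set your $A$ is not $\Gamma$-invariant. We have $\pi\cdot\upsilon\subseteq\operatorname{dom}(\pi\cdot\sigma)\times\mathbb P$, and $\operatorname{dom}(\pi\cdot\sigma)$ can differ from $\operatorname{dom}\sigma$; take $\sigma=\{\langle\dot x_0,1_{\mathbb P}\rangle\}$ in the basic Cohen model. What holds, and suffices, is that $\sym(\sigma)\in\mathcal F$ permutes $A$. The paper states exactly this for $B_\sigma$.

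\textbf{Comparison with the paper.} With these repairs your proof is correct. In Power Set, your $A$ and $\mu'$ play the roles of $B_\sigma$ and $\tau\cap^\bullet\sigma$. The only difference is that the paper attaches the conditions $p\forcesS u\subseteq\sigma$, so that $\dot Q_\sigma^G$ is exactly $\mathcal P(\sigma^G)\cap W$; you attach $1_{\mathbb P}$ and finish with Separation in $W$.

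Collection is where you genuinely differ. You take every hereditarily symmetric name below a Scott-type rank bound, which yields a collecting name with trivial conditions that is invariant under all of $\Gamma$. The paper uses Collection in $V$ to gather actual witness pairs $\langle\tau,t\rangle$ into $b_0$, then symmetrizes explicitly with $b=\bigcup_{\pi\in H}\pi\cdot b_0$, which is invariant only under $H$. With the recursive rank, your version needs neither condition bookkeeping nor an orbit construction, at the cost of a much larger collecting set. The paper's version keeps the collecting name tight.
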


\begin{proof}
For $\sigma,\tau\in\HS$, define the intersection name
\[
\tau\cap^\bullet\sigma=\bigl\{\langle\rho,r\rangle:\exists s\,\bigl(\langle\rho,s\rangle\in\sigma, r\leq s,
\text{ and }r\forcesS\rho\in\tau\bigr)\bigr\}.
\]
Every $\pi\in\sym(\sigma)\cap\sym(\tau)$ fixes $\tau\cap^\bullet\sigma$, and every name appearing as the first coordinate of a pair in this name also appears as the first coordinate of a pair in $\sigma$.  Hence $\tau\cap^\bullet\sigma\in\HS$.  Moreover, $\tau\cap^\bullet\sigma\subseteq \operatorname{dom}(\sigma)\times\mathbb P$ and $(\tau\cap^\bullet\sigma)^G=\tau^G\cap\sigma^G$.  Let $B_\sigma=\HS\cap\mathcal P\bigl(\operatorname{dom}(\sigma)\times\mathbb P\bigr)$, which is a set by Separation, and define $\dot Q_\sigma=\bigl\{\langle u,p\rangle:u\in B_\sigma\text{ and }p\forcesS u\subseteq\sigma\bigr\}$.
If $\pi\in\sym(\sigma)$, then $\pi$ permutes $B_\sigma$, because it preserves $\operatorname{dom}(\sigma)\times\mathbb P$ and hereditary symmetry.  For $u\in B_\sigma$ and $p\in\mathbb P$,
\[
p\forcesS u\subseteq\sigma
\quad\text{if and only if}\quad
\pi p\forcesS\pi\cdot u\subseteq\sigma.
\]
Thus $\sym(\sigma)\leq\sym(\dot Q_\sigma)$.  Since $\sym(\sigma)\in\mathcal F$ and every name occurring directly in $\dot Q_\sigma$ belongs to $B_\sigma\subseteq\HS$, we have $\dot Q_\sigma\in\HS$.

If $x\in\dot Q_\sigma^G$, then $x\in W$ and $x\subseteq\sigma^G$.  Conversely, let $x\in W$ with $x\subseteq\sigma^G$, and choose $\tau\in\HS$ with $\tau^G=x$.  Then $\tau\cap^\bullet\sigma\in B_\sigma$ and $(\tau\cap^\bullet\sigma)^G=x$.  Since $(\tau\cap^\bullet\sigma)^G\subseteq\sigma^G$, there is $p\in G$ such that $p\forcesS\tau\cap^\bullet\sigma\subseteq\sigma$.  Thus $x\in\dot Q_\sigma^G$, and consequently $\dot Q_\sigma^G=\mathcal P(\sigma^G)\cap W$.
This proves Power Set in $W$.

To verify Collection, fix $\sigma,\vec u\in\HS$ and suppose $W\models\forall x\in\sigma^G\,\exists y\,\varphi(x,y,\vec u^G)$.  Choose $p\in G$ such that $p\forcesS\forall x\in\sigma\,\exists y\,\varphi(x,y,\vec u)$.  Let $I=\bigl\{\langle\rho,s,r\rangle:\langle\rho,s\rangle\in\sigma,\ r\leq p,\text{ and }r\leq s\bigr\}$.  If $\langle\rho,s,r\rangle\in I$, then $r\forcesS\rho\in\sigma$, so $r\forcesS\exists y\,\varphi(\rho,y,\vec u)$.  There are therefore $t\leq r$ and $\tau\in\HS$ such that $t\forcesS\varphi(\rho,\tau,\vec u)$.  Collection and Separation in $V$ give a set $C$ of quintuples $\langle\rho,s,r,t,\tau\rangle$ such that every $\langle\rho,s,r\rangle\in I$ occurs as the first three coordinates of a member of $C$, with $t\leq r$ and $t\forcesS\varphi(\rho,\tau,\vec u)$.

Let $H=\sym(\sigma)\cap\bigcap_i\sym(u_i)\in\mathcal F$, put $b_0=\bigl\{\langle\tau,t\rangle:\exists\rho,s,r\,\langle\rho,s,r,t,\tau\rangle\in C\bigr\}$, and set $b=\bigcup_{\pi\in H}\pi\cdot b_0$.
Replacement shows that $b$ is a set.  Every $\xi\in H$ fixes $b$, since left multiplication by $\xi$ permutes $H$.  Thus $H\leq\sym(b)$.  Each name occurring directly in $b$ has the form $\pi\cdot\tau$ with $\tau\in\HS$, and normality of $\mathcal F$ gives $\pi\cdot\tau\in\HS$.  Since $H\in\mathcal F$, we have $b\in\HS$.  The identity belongs to $H$, so $b_0\subseteq b$.

Let $x\in\sigma^G$.  Choose $\langle\rho,s\rangle\in\sigma$ with $s\in G$ and $\rho^G=x$, and choose $r_0\in G$ with $r_0\leq p,s$.  Define $D_{\rho,s}=\bigl\{t:\exists r,\tau\,\langle\rho,s,r,t,\tau\rangle\in C\bigr\}$.  If $q\leq r_0$, then $\langle\rho,s,q\rangle\in I$, so $C$ contains some $\langle\rho,s,q,t,\tau\rangle$ with $t\leq q$.  {Hence $D_{\rho,s}$ is dense below $r_0\in G$, and genericity gives $t\in G\cap D_{\rho,s}$.}  Choose $\langle\rho,s,r,t,\tau\rangle\in C$.  Then $\langle\tau,t\rangle\in b_0\subseteq b$, so $\tau^G\in b^G$ and $W\models\varphi(x,\tau^G,\vec u^G)$.

Power Set and Collection are now established.  For the remaining axioms, transitivity gives Extensionality and Foundation, canonical names give Empty Set, Pairing, Union, and Infinity, and symmetric subnames give Separation.  Collection and Separation imply Replacement.

Thus $W\models\ZF$.
\end{proof}
\begin{lemma}\label{lem:HS-absoluteness}
Suppose that $V_\theta\models\ZF$ and $\tc(\{\mathscr S\})\subseteq V_\theta$.  If $\sigma\in V_\theta$ is a $\mathbb P$-name, then hereditary symmetry computed in $V_\theta$ agrees with hereditary symmetry computed in $V$.  Consequently, if $V_\eta\models\ZF$ and $j:V_\theta\to V_\eta$ is elementary with $j(\mathscr S)=\mathscr S$, then $j(\sigma)\in\HS$ whenever $\sigma\in\HS\cap V_\theta$.
\end{lemma}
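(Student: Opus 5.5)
Two claims must be checked: that hereditary symmetry is absolute between $V_\theta$ and $V$, and the preservation statement under $j$. The approach is to unwind the characterization \eqref{eq:HS-characterization} and show that each ingredient (the set $\operatorname{cl}(\sigma)$, the action $\pi\cdot\tau$, the stabilizer $\sym(\tau)$, and membership in $\mathcal F$) is computed identically in $V_\theta$ and in $V$.

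First, the transitive closure $\tc(\{\mathscr S\})\subseteq V_\theta$ puts $\mathbb P$, $\Gamma$, every $\pi\in\Gamma$, $\mathcal F$, and every member of $\mathcal F$ into $V_\theta$. Since $V_\theta$ is transitive and contains $\sigma$, it contains $\tc(\sigma)$, and hence every name in $\operatorname{cl}(\sigma)$. The closure $\operatorname{cl}(\sigma)$ is defined by a recursion on $\omega$, and $V_\theta\models\ZF$. So $V_\theta$ computes some set as $\operatorname{cl}(\sigma)$, and by absoluteness of $\omega$-recursions between transitive models of $\ZF$ this set is the true $\operatorname{cl}(\sigma)$.

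Next, for $\tau\in V_\theta$ and $\pi\in\Gamma$, the name $\pi\cdot\tau$ is defined by well-founded recursion on names. The recursion is carried out inside $V_\theta$ using $\pi\in V_\theta$, so it agrees with the value in $V$, and $\pi\cdot\tau\in V_\theta$. Consequently $\sym(\tau)=\{\pi\in\Gamma:\pi\cdot\tau=\tau\}$ is a $\Delta_0$ separation from $\Gamma$ with absolute parameters, so it is the same set in both models. The statement ``$\sym(\tau)\in\mathcal F$'' is then absolute, because $\mathcal F\in V_\theta$ and membership is absolute. Quantifying over $\tau\in\operatorname{cl}(\sigma)$, a set available in both models, gives $\sigma\in\HS^{V_\theta}$ if and only if $\sigma\in\HS^V$. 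The main care point is that the action recursion is genuinely absolute: I will argue that the recursion is defined by a $\Delta_1^{\ZF}$ formula and that transitive models of $\ZF$ agree on such recursively defined functions.

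For the consequence, let $\sigma\in\HS\cap V_\theta$. By the first part, $V_\theta\models$ ``$\sigma$ is a hereditarily symmetric $\mathscr S$-name''. Elementarity of $j$ together with $j(\mathscr S)=\mathscr S$ gives $V_\eta\models$ ``$j(\sigma)$ is a hereditarily symmetric $\mathscr S$-name''. The hypotheses of the first part hold for $\eta$ as well: $V_\eta\models\ZF$, and $\mathscr S=j(\mathscr S)\in V_\eta$, so $\tc(\{\mathscr S\})\subseteq V_\eta$ by transitivity. Also $j(\sigma)\in V_\eta$ is a $\mathbb P$-name, since being a $\mathbb P$-name is absolute and elementarity transfers it. Applying the first part with $\eta$ in place of $\theta$ yields $j(\sigma)\in\HS$.
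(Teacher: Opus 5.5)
Your proposal is correct and follows essentially the same route as the paper. Both arguments use absoluteness of the recursions defining the action on names and $\operatorname{cl}(\sigma)$ inside the transitive model $V_\theta$, and agreement on whether $\sym(\nu)\in\mathcal F$ for each $\nu\in\operatorname{cl}(\sigma)$. For the consequence, both use elementarity with $j(\mathscr S)=\mathscr S$ and then apply the first part to $V_\eta$, using $\tc(\{\mathscr S\})\subseteq V_\eta$ by transitivity.

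There are two minor differences. You compute $\operatorname{cl}(\sigma)$ by an $\omega$-recursion, where the paper uses a general well-founded recursion. Also, the separation defining $\sym(\tau)$ is not literally $\Delta_0$; it is absolute because the action $\pi\cdot\tau$ is absolute.
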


\begin{proof}
Well-founded recursion is absolute to the transitive model $V_\theta$, so the action on names and $\operatorname{cl}(\tau)$ are computed correctly there.  Thus $\operatorname{cl}(\tau)\in V_\theta$.  For every $\nu\in\operatorname{cl}(\tau)$, $V_\theta$ and $V$ compute $\sym(\nu)$ from the same group $\Gamma$ and agree whether it belongs to $\mathcal F$.  Hence they agree on~\eqref{eq:HS-characterization}.

Now suppose that $j:V_\theta\to V_\eta$ is elementary, $j(\mathscr S)=\mathscr S$, and $\sigma\in\HS\cap V_\theta$.  By the first part, $V_\theta$ regards $\sigma$ as hereditarily symmetric, so elementarity gives that $V_\eta$ regards $j(\sigma)$ as hereditarily symmetric with respect to $\mathscr S$.

Since $j(\mathscr S)=\mathscr S\in V_\eta$ and $V_\eta$ is transitive, $\tc(\{\mathscr S\})\subseteq V_\eta$.  Applying the first part to $V_\eta$ yields $j(\sigma)\in\HS$ in $V$.
\end{proof}

For names $\sigma$ and $\tau$, let $\langle\sigma,\tau\rangle^\bullet$ be the canonical name for their Kuratowski ordered pair.  The construction is equivariant:
\[
\pi\cdot\langle\sigma,\tau\rangle^\bullet
=\langle\pi\cdot\sigma,\pi\cdot\tau\rangle^\bullet.
\]
{The canonical ordered-pair construction also preserves hereditary symmetry: if $\sigma,\tau\in\HS$, every new name in its Kuratowski code is fixed by $\sym(\sigma)\cap\sym(\tau)$; hence $\langle\sigma,\tau\rangle^\bullet\in\HS$.}

\begin{proposition}\label{prop:symmetric-lift}
Suppose that $V_\theta,V_\eta\models\ZF$, and let $j:V_\theta\to V_\eta$ be elementary.  Suppose that $j(z)=z$ for every $z\in\tc(\{\mathscr S\})$.  If $G\subseteq\mathbb P$ is $V$-generic and $W=\HS^G$, then there is a function $E_j\in W$ such that
\[
E_j(\sigma^G)=j(\sigma)^G\qquad(\sigma\in\HS\cap V_\theta).
\]
Thus, in $V[G]$, $E_j=j_G\restr\{\sigma^G:\sigma\in\HS\cap V_\theta\}$, where $j_G:V_\theta[G]\to V_\eta[G]$ is the ordinary forcing lift of $j$, defined by $j_G(\sigma^G)=j(\sigma)^G$.
\end{proposition}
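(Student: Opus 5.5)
We will define $E_j$ directly as the interpretation of a single hereditarily symmetric name. Set
\[
\dot E=\bigl\{\langle\langle\sigma,j(\sigma)\rangle^\bullet,1_{\mathbb P}\rangle:\sigma\in\HS\cap V_\theta\bigr\}.
\]
This is a set in $V$ by Replacement, since $j$ is a set. The proof then has three parts: show $\dot E\in\HS$, show that $\dot E^G$ is a function, and read off the displayed equation.

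\textbf{Step 1: well-definedness of the pairs.} First we need each $j(\sigma)$ to be a $\mathbb P$-name in $\HS$. Since $j$ fixes every element of $\tc(\{\mathscr S\})$, in particular $j(\mathscr S)=\mathscr S$. Lemma~\ref{lem:HS-absoluteness} then gives $j(\sigma)\in\HS$ for $\sigma\in\HS\cap V_\theta$. By the remark preceding the proposition, $\langle\sigma,j(\sigma)\rangle^\bullet\in\HS$.

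\textbf{Step 2: the symmetry of $\dot E$ (main obstacle).} We claim $\Gamma\leq\sym(\dot E)$. The key identity is
\[
j(\pi\cdot\sigma)=j(\pi)\cdot j(\sigma)=\pi\cdot j(\sigma)\qquad(\pi\in\Gamma,\ \sigma\in V_\theta).
\]
To prove it, note that the action on names is defined by a uniform recursion, absolute to $V_\theta$ and $V_\eta$, so $j$ commutes with it. Moreover $j(\pi)=\pi$ because $\pi\in\tc(\{\mathscr S\})$, and likewise $j(p)=p$ for conditions $p$.

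For the hypothesis $\pi\in\tc(\{\mathscr S\})$ to hold, $\Gamma$ must be an element of the tuple $\mathscr S$. The action $\pi\cdot\sigma$ also uses $\pi p$, which is computed from the graph of $\pi$; that graph lies in $\tc(\{\mathscr S\})$ and is fixed pointwise by $j$.

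Since $\pi$ also permutes $\HS\cap V_\theta$ (normality of $\mathcal F$, and the action preserves name rank), applying $\pi$ to $\dot E$ sends each pair $\langle\langle\sigma,j(\sigma)\rangle^\bullet,1\rangle$ to $\langle\langle\pi\cdot\sigma,j(\pi\cdot\sigma)\rangle^\bullet,1\rangle$, using equivariance of the pair name and $\pi1=1$. Hence $\pi\cdot\dot E=\dot E$. So $\sym(\dot E)=\Gamma\in\mathcal F$. Every name occurring directly in $\dot E$ is in $\HS$ by Step 1, so $\dot E\in\HS$.

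\textbf{Step 3: $\dot E^G$ is a function.} Put $E_j=\dot E^G\in W$. Then $E_j=\{\langle\sigma^G,j(\sigma)^G\rangle:\sigma\in\HS\cap V_\theta\}$, and it remains to show single-valuedness. Work in $V[G]$ with the ordinary lift $j_G$, which is well defined by the standard argument: if $\sigma^G=\tau^G$, some $p\in G$ forces $\sigma=\tau$. This forcing statement is computed in $V_\theta$, which requires a sufficiently large $\theta$, or else a check that $\mathbb P\in V_\theta$ and that the forcing relation for $\Delta_0$ statements is absolute. Applying $j$ with $j(p)=p$ gives $p\Vdash j(\sigma)=j(\tau)$, so $j(\sigma)^G=j(\tau)^G$.

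Hence $E_j$ is a function, it satisfies $E_j(\sigma^G)=j(\sigma)^G$, and in $V[G]$ it agrees with $j_G\restr\{\sigma^G:\sigma\in\HS\cap V_\theta\}$.
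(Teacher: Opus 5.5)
Your proposal is correct and follows the paper's proof essentially step for step. It uses the same graph name $\{\langle\langle\sigma,j(\sigma)\rangle^\bullet,1_{\mathbb P}\rangle:\sigma\in\HS\cap V_\theta\}$, Lemma~\ref{lem:HS-absoluteness} for $j(\sigma)\in\HS$, the equivariance $j(\pi\cdot\sigma)=\pi\cdot j(\sigma)$ to get $\sym(\dot E)=\Gamma$, and well-definedness of $j_G$ for single-valuedness.

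Two small corrections. First, the action need not preserve the von Neumann rank of names, since an automorphism may move a condition to one of different rank. However, $\pi^{\pm1}$ still map $\mathbb P$-names in $V_\theta$ to names in $V_\theta$ because $\pi\in V_\theta\models\ZF$, and that is all you need. Second, Step~3 needs no extra largeness of $\theta$: since $\mathbb P\in V_\theta\models\ZF$, the forcing theorem holds over $V_\theta$ and $V_\eta$ for the $V$-generic $G$, exactly as the paper uses.
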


\begin{proof}
Since $j$ fixes every member of $\tc(\{\mathscr S\})$, it fixes $\mathbb P$ pointwise, so $j``G=G$.  Since $G$ is $V$-generic, it is generic over both $V_\theta$ and $V_\eta$.

{Define $j_G(\sigma^G)=j(\sigma)^G$ for every $\mathbb P$-name $\sigma\in V_\theta$.  The forcing theorem and elementarity show that this is well defined and that}
\[
p\Vdash_{V_\theta}\varphi(\vec\sigma)
\quad\text{if and only if}\quad
p\Vdash_{V_\eta}\varphi(j(\vec\sigma))
\]
{for every formula $\varphi$, condition $p\in\mathbb P$, and finite tuple $\vec\sigma$ of names from $V_\theta$.  Thus $j_G:V_\theta[G]\to V_\eta[G]$ is elementary.}
Put $D_\theta=\HS\cap V_\theta$.  Hereditary symmetry is first-order definable from $\mathscr S$, so $D_\theta$ is a set by Separation.  Lemma~\ref{lem:HS-absoluteness} gives $j(\sigma)\in\HS$ for every $\sigma\in D_\theta$.

The action of $\Gamma$ on $\mathbb P$-names is defined by well-founded recursion.  Hence, for $\pi\in\Gamma$ and every $\mathbb P$-name $\sigma\in V_\theta$, elementarity and $j(\pi)=\pi$ give
\begin{equation}\label{eq:equivariance}
j(\pi\cdot\sigma)
=j(\pi)\cdot j(\sigma)
=\pi\cdot j(\sigma).
\end{equation}
Moreover, $\operatorname{cl}(\pi\cdot\sigma)=\pi``\operatorname{cl}(\sigma)$, and $\sym(\pi\cdot\nu)=\pi\sym(\nu)\pi^{-1}$.  Since $\mathcal F$ is normal, $\sigma$ is hereditarily symmetric if and only if $\pi\cdot\sigma$ is hereditarily symmetric.  As $\pi$ bijects the $\mathbb P$-names in $V_\theta$, it follows that $\pi``D_\theta=D_\theta$.

The required graph name is
\[
\dot E_j=\bigl\{\langle\langle\sigma,j(\sigma)\rangle^\bullet,1_{\mathbb P}\rangle:\sigma\in D_\theta\bigr\}.
\]
Since $D_\theta$ is a set, $\dot E_j$ is a $\mathbb P$-name, and every pair name occurring in it belongs to $\HS$.  For each $\pi\in\Gamma$, equivariance of the ordered-pair name, $\pi 1_{\mathbb P}=1_{\mathbb P}$, equation~\eqref{eq:equivariance}, and $\pi``D_\theta=D_\theta$ give
\[
\pi\cdot\dot E_j
 =\bigl\{\langle\langle\pi\cdot\sigma,j(\pi\cdot\sigma)\rangle^\bullet,1_{\mathbb P}\rangle:\sigma\in D_\theta\bigr\}
 =\dot E_j.
\]
Thus $\sym(\dot E_j)=\Gamma\in\mathcal F$, so $\dot E_j\in\HS$.

Let $E_j=\dot E_j^G$.  Then
\[
E_j=\bigl\{\langle\sigma^G,j(\sigma)^G\rangle:\sigma\in D_\theta\bigr\}.
\]
If $\sigma^G=\tau^G$, the well-definedness of $j_G$ gives $j(\sigma)^G=j(\tau)^G$.  Hence $E_j$ is a function in $W$, with
\[
E_j(\sigma^G)=j(\sigma)^G
\qquad
(\sigma\in D_\theta).\qedhere
\]
\end{proof}

\section{Preservation}

\begin{lemma}\label{lem:ZF-rank-criterion}
Let $\theta>\omega$ be a limit ordinal.  If no function $f:a\to\theta$ with $a\in V_\theta$ has cofinal range, then $V_\theta\models\ZF$.  The hypothesis on $\theta$ is $\Pi_1$.
\end{lemma}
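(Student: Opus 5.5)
The plan is to check the $\ZF$ axioms in $V_\theta$ one by one. Since $\theta$ is a limit ordinal with $\theta>\omega$, the structure $V_\theta$ is transitive and closed under pairs, unions and power sets, and it contains $\omega$. So Extensionality, Foundation, Empty Set, Pairing, Union, Infinity and Power Set hold there. Separation holds because any definable subclass of $a\in V_\theta$ is a subset of $a$, and such a subset lies in $V_\theta$ since its rank is at most $\rank(a)$. The only axiom that needs the hypothesis is Replacement, which I will obtain in the form of Collection.

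For Collection, fix $a\in V_\theta$ and a formula $\varphi$ with parameters in $V_\theta$ such that $V_\theta\models\forall x\in a\,\exists y\,\varphi(x,y)$. In $V$, define $f:a\to\theta$ by letting $f(x)$ be the least $\alpha<\theta$ for which there is some $y\in V_\alpha$ with $V_\theta\models\varphi(x,y)$. The function $f$ is a set in $V$ by Replacement in $V$, since satisfaction in the set $V_\theta$ is definable. By hypothesis $\operatorname{ran}(f)$ is not cofinal in $\theta$, so there is some $\beta<\theta$ with $\operatorname{ran}(f)\subseteq\beta$. Then $b=V_\beta\in V_\theta$ is a collecting set: every $x\in a$ has a witness $y\in b$. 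Together with Separation in $V_\theta$, this gives Replacement.

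The step I expect to be the main obstacle is not Collection but the claim that the hypothesis is $\Pi_1$. The quantifiers over $V_\theta$ have to be handled carefully, because $x\mapsto V_\theta$ is not $\Delta_0$. I will write the hypothesis as
\[
\forall f\,\forall a\,\forall z\,\bigl[\bigl(z=V_\theta\wedge a\in z\wedge f\text{ is a function with domain }a\text{ and range }\subseteq\theta\bigr)\rightarrow\exists\beta\in\theta\,\forall\xi\in\operatorname{ran}f\,(\xi<\beta)\bigr].
\]
The formula ``$z=V_\theta$'' is $\Pi_1$. The parts ``$\theta$ is a limit ordinal'' and ``$\theta>\omega$'' are $\Delta_0$ in $\theta$ and $\omega$, and $\omega$ is itself $\Delta_0$-definable.

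The difficulty is that ``$z=V_\theta$'' appears negatively as the antecedent, so the displayed formula is only $\Pi_2$. I will fix this by replacing $z$ with a $\Sigma_1$ description. Concretely, the antecedent can say that $z$ is the last value of a function $g$ with domain $\theta+1$ satisfying the recursion $g(\alpha+1)=\mathcal P(g(\alpha))$ and $g(\lambda)=\bigcup_{\alpha<\lambda}g(\alpha)$. This is still not $\Delta_0$, because ``$=\mathcal P$'' is $\Pi_1$.

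To avoid this, I will quantify over the ranks directly. The hypothesis is equivalent to the statement that for every function $f$ whose domain has rank less than $\theta$ and whose values are ordinals below $\theta$, the range of $f$ is bounded below $\theta$. Here ``$\rank(a)<\theta$'' is $\Delta_1$, because the rank function is $\Sigma_1$-definable with a $\Delta_1$ graph. Placing it in the antecedent therefore uses only its $\Sigma_1$ form negated, which is $\Pi_1$. The result is $\forall f\,\forall a\,\forall r\,[(r\text{ is a rank function on }\tc(\{a\})\text{ witnessing }\rank(a)<\theta\wedge\dots)\rightarrow\exists\beta\in\theta\dots]$. Since ``$r$ is the rank function on a given transitive set'' is $\Delta_0$ in $r$ and that set, and ``$t=\tc(\{a\})$'' is $\Delta_0$ given $t$, the whole formula is $\Pi_1$.
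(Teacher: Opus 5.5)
Your proof is correct and follows the paper's route: you get Collection from the least-rank function on $a$, which the hypothesis bounds below $\theta$, and you get the $\Pi_1$ claim by writing the $\Delta_1$ relation $a\in V_\theta$ in its $\Sigma_1$ form inside the antecedent. One small slip: ``$t=\tc(\{a\})$'' is only $\Delta_1$, not $\Delta_0$, but you only need $t$ transitive with $a\in t$, which is $\Delta_0$ and still lets the rank function on $t$ compute $\rank(a)$.
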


\begin{proof}
Only Replacement and Collection require proof.  Suppose $a,p\in V_\theta$ and $V_\theta\models\forall x\in a\,\exists!y\,\varphi(x,y,p)$.  Replacement in $V$ gives the corresponding function $F$.  If the ranks of its values were unbounded in $\theta$, then $x\mapsto\rank(F(x))$ would be cofinal in $\theta$, contrary to the hypothesis.  Hence $F``a\in V_\theta$.

The same rank argument proves Collection.  Suppose $V_\theta\models\forall x\in a\,\exists y\,\varphi(x,y,p)$.  For each $x\in a$, take the least rank of a suitable $y\in V_\theta$.  These ranks are bounded below $\theta$.  If $\beta<\theta$ lies above them, Separation applied to $V_\beta$ gives a set containing a suitable $y$ for every $x\in a$.  Finally, $x\in V_\theta$ is $\Delta_1$ in $\theta$, and ``$f:a\to\theta$ is cofinal'' is $\Delta_0$ in $f,a,\theta$.  Thus the hypothesis is $\Pi_1$.
\end{proof}

\begin{lemma}\label{lem:ZF-ranks}

Assume $\ZF+\VP$.  Above every ordinal $\gamma$ there is a limit ordinal $\kappa$ such that no map from any $a\in V_\kappa$ is cofinal in $\kappa$.  Consequently $V_\kappa\models\ZF$.  Under $\AC$, such a $\kappa$ may be chosen measurable.
\end{lemma}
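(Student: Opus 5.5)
We argue by contradiction using $\VP$ applied to a proper class of rank structures indexed by the "bad" ordinals. Fix $\gamma$ and suppose that every limit ordinal $\kappa>\gamma$ admits some $a\in V_\kappa$ and a map $f:a\to\kappa$ with cofinal range. For each such $\kappa$ consider the structure
\[
M_\kappa=\bigl(V_{\kappa+1},\in,\kappa,\{\xi\}_{\xi\le\gamma}\bigr),
\]
in the language with $\in$, a constant for $\kappa$, and constants naming every ordinal $\xi\le\gamma$; this language is a set. The class $\{M_\kappa:\kappa>\gamma\text{ limit}\}$ is definable from the parameter $\gamma$ and is proper. By $\VP$ we get $\kappa\neq\lambda$ and an elementary $j:M_\kappa\to M_\lambda$. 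Since $j$ is $\in$-elementary between transitive structures, it is order-preserving on ordinals, $j(\kappa)=\lambda$, and $j(\xi)=\xi$ for all $\xi\le\gamma$. If $j$ were the identity on ordinals $<\kappa$ then $\lambda=j(\kappa)$ would be $\ge\kappa$; we must also rule out $j\restr\kappa=\mathrm{id}$ with $\lambda>\kappa$. For this I would add the predicate ``$x$ is an ordinal of $V_{\kappa+1}$ and every ordinal is $\le$ it'' implicitly via the constant $\kappa$, and note that $V_{\kappa+1}\models$ ``$\kappa$ is the largest ordinal'', so $\lambda$ is the largest ordinal of $V_{\lambda+1}$; then $j``(\kappa+1)$ is cofinal-type considerations give a critical point: take $\delta$ least with $j(\delta)\neq\delta$, which exists below $\kappa$ unless $j\restr V_\kappa=\mathrm{id}$. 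In the latter case $j$ restricted to $V_\kappa$ is the identity but $j(V_\kappa)=V_\lambda\ne V_\kappa$, and $V_\kappa$ is definable in $M_\kappa$ as $\{x:\rank(x)<\kappa\}$, so this is consistent only if $\lambda\neq\kappa$; I would exclude it by instead building the structures so that $j$ must be nontrivial on ordinals, e.g.\ apply $\VP$ to the class indexed only by $\kappa$ of some fixed cofinality pattern, or more simply note that then $j(\kappa)=\lambda>\kappa$ contradicts nothing yet, so the real argument must use a critical point. The cleaner route I would actually commit to: choose $\kappa<\lambda$ (swap if needed by elementarity considerations is not available, so instead restrict the class to a $\omega$-sequence-free family). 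I expect this step---guaranteeing a genuine critical point $\delta$ with $\gamma<\delta\le\kappa$---to be the main obstacle, and I would handle it by taking the structures $(V_{\kappa},\in,\{\xi\}_{\xi\le\gamma})$ instead, where $j:V_\kappa\to V_\lambda$ elementary with $\kappa\ne\lambda$ forces $j\neq\mathrm{id}$ on ordinals (if $j\restr\Ord=\mathrm{id}$ then by $\in$-induction on rank $j$ is the identity and $V_\kappa\prec V_\lambda$ with ranks computed correctly gives surjectivity onto $V_\kappa$, hence $\lambda=\kappa$).

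Given a critical point $\delta$ with $\gamma<\delta<\kappa$ (it is above $\gamma$ by the constants), I then show $\delta$ is as required: $\delta$ is a limit ordinal, and if $f:a\to\delta$ with $a\in V_\delta$ were cofinal, then $j(a)=a$ (since $j\restr V_\delta$ is the identity by rank induction) and $j(f)(x)=j(f(x))=f(x)$ for $x\in a$, so $j(f)=f$; but $V_\kappa\models$ ``$\operatorname{ran}f$ is cofinal in $\delta$'', so $V_\lambda\models$ ``$\operatorname{ran}f$ is cofinal in $j(\delta)$'', whereas $\operatorname{ran} f\subseteq\delta<j(\delta)$, a contradiction. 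Note this argument does not even need the contradiction hypothesis; it directly produces the desired $\delta>\gamma$. Then $V_\delta\models\ZF$ by Lemma~\ref{lem:ZF-rank-criterion}.

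Under $\AC$, the critical point $\delta$ carries the normal ultrafilter $U=\{X\subseteq\delta:\delta\in j(X)\}$, which is defined since $\mathcal P(\delta)\subseteq V_{\delta+1}\subseteq V_\kappa$; standard arguments show $U$ is a $\delta$-complete nonprincipal ultrafilter, so $\delta$ is measurable.
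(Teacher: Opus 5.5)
There is a genuine gap, at the step you yourself flag as the main obstacle, and the route you finally commit to does not close it.

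With the structures $(V_\kappa,\in,\{\xi\}_{\xi\le\gamma})$ there is no constant naming the index. $\VP$ only promises \emph{some} elementary $j:V_\kappa\to V_\lambda$ with $\kappa\neq\lambda$, and that $j$ may simply be the inclusion map. Whenever $\kappa<\lambda$ and $V_\kappa\prec V_\lambda$, the identity is an elementary embedding between distinct members of your class, and it moves no ordinal. This situation is certainly consistent with $\VP$: for instance, it holds for club many $\kappa$ below any inaccessible $\lambda$.

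Your justification runs: $j\restr\Ord=\mathrm{id}$ makes $j$ the identity, and then ``ranks computed correctly gives surjectivity onto $V_\kappa$, hence $\lambda=\kappa$.'' The first half is correct. The last step fails, because an elementary embedding need not be surjective, and $V_\kappa\prec V_\lambda$ with $\kappa<\lambda$ is no contradiction. So you never obtain the critical point $\delta$, and everything after ``Given a critical point'' is unsupported. The alternatives you float (fixed cofinality patterns, ``$\omega$-sequence-free'' families) are not arguments.

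Your first structure already worked. With a constant for the index, $j(\kappa)=\lambda\neq\kappa$, so $j$ moves an ordinal, and the least moved ordinal $\delta$ satisfies $\gamma<\delta\le\kappa$. The case $\delta=\kappa$ (that is, $j\restr\kappa=\mathrm{id}$ and $\lambda>\kappa$) does not need to be ruled out. It just means the index itself is the critical point, and your cofinality argument then applies verbatim.

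This is exactly the paper's proof. It uses $\mathcal B_\lambda=\langle V_{\lambda+\omega},\in,\lambda,(z)_{z\in V_{\bar\gamma+1}}\rangle$ and takes $\kappa$ to be the least ordinal moved, explicitly allowing $\kappa=\lambda$. It uses $V_{\lambda+\omega}$ rather than $V_{\lambda+1}$ precisely because of this boundary case. Even when the critical point equals the index, the structure must contain the cofinal map $f$, all of $\mathcal P(\kappa)$, and every sequence of fewer than $\kappa$ subsets of $\kappa$, so that the $\kappa$-completeness of $U=\{X\subseteq\kappa:\kappa\in j(X)\}$ can be verified in the $\AC$ clause.
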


\begin{proof}

Fix $\gamma$ and put $\bar\gamma=\max\{\gamma,\omega\}$.  For $\lambda>\bar\gamma$, consider
\[
\mathcal B_\lambda=\bigl\langle V_{\lambda+\omega},\in,\lambda,
(z)_{z\in V_{\bar\gamma+1}}\bigr\rangle
\]
 in the set-sized language with membership, one distinguished constant, and constants for $V_{\bar\gamma+1}$.  The structures $\mathcal B_\lambda$ form a definable proper class.  Applying $\VP$ to this class gives distinct ordinals $\lambda,\mu$ and an elementary embedding $j:\mathcal B_\lambda\longrightarrow\mathcal B_\mu$.
By elementarity and the choice of constants, $j(\lambda)=\mu$, while $j(z)=z$ for every $z\in V_{\bar\gamma+1}$.  Since $\lambda\ne\mu$, the embedding moves an ordinal.  Let $\kappa$ be the least ordinal moved by $j$.  Then $\bar\gamma<\kappa\leq\lambda$, $j(\kappa)>\kappa$, and a rank induction shows that $j(x)=x$ for every $x\in V_\kappa$.

 Suppose that $f:a\to\kappa$ is cofinal for some $a\in V_\kappa$.  The embedding fixes $a$ and each of its elements, so $j(f):a\to j(\kappa)$ is cofinal and $j(f)(x)=f(x)<\kappa$ for every $x\in a$.  This is impossible because $j(\kappa)>\kappa$.  Thus no map from a member of $V_\kappa$ is cofinal in $\kappa$.  The ordinal $\kappa$ cannot be a successor, since $j$ fixes every smaller ordinal.  As $\kappa>\bar\gamma\geq\omega$, Lemma~\ref{lem:ZF-rank-criterion} applies, so $V_\kappa\models\ZF$.

Assume now that $\AC$ holds and define $U=\{X\subseteq\kappa:\kappa\in j(X)\}$.

Elementarity shows that $U$ is an ultrafilter on $\kappa$.  Because $j$ fixes every ordinal below $\kappa$, $U$ is nonprincipal.  Moreover, $U$ is $\kappa$-complete, since $V_{\lambda+\omega}$ contains every sequence of fewer than $\kappa$ subsets of $\kappa$.  Under Choice, $\kappa$ must be a cardinal.  If it were not, then $\kappa$ could be partitioned into fewer than $\kappa$ singletons, contradicting the $\kappa$-completeness of the nonprincipal ultrafilter $U$.  Thus $U$ is a nonprincipal $\kappa$-complete ultrafilter on the cardinal $\kappa$, and therefore $\kappa$ is measurable.
\end{proof}

\begin{theorem}\label{thm:preservation}

Let $V\models\ZF+\VP$, with $\VP$ understood in the arbitrary-language sense fixed in the Introduction.  Let $\mathscr S=\langle\mathbb P,\Gamma,\mathcal F\rangle\in V$ be a set-sized symmetric system, and let $G\subseteq\mathbb P$ be $V$-generic.  Then
\[
W=\HS^G\models\ZF+\VP.
\]
\end{theorem}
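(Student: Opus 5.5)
We already know $W\models\ZF$ by Proposition~\ref{prop:symmetric-ZF}, so the plan concerns only $\VP$. Work in $V$. Fix a formula $\varphi(x,y)$, a parameter $a\in W$, and a set-sized language $L\in W$. Suppose that in $W$ the class $\mathcal C=\{M:W\models\varphi(M,a)\}$ consists of $L$-structures and is a proper class. Fix $p\in G$, a name $\dot a\in\HS$ with $\dot a^G=a$, and a name $\dot L\in\HS$ with $\dot L^G=L$, such that $p$ forces all of this over $W$.

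For each $M\in\mathcal C$, let $\alpha_M$ be the least $\alpha$ such that some $\sigma\in\HS\cap V_\alpha$ has $\sigma^G=M$. Following the introduction, we do not choose one name at each rank. Instead we consider the set $N_\alpha$ of all names $\sigma\in\HS$ of rank $<\alpha$ such that some $q\le p$ symmetrically forces ``$\sigma$ is an $L$-structure satisfying $\varphi(\cdot,\dot a)$.'' The ranks $\alpha_M$ are unbounded, since each $V_\alpha$ carries only set many names and $\mathcal C$ is proper.

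The natural class to which we apply $\VP$ in $V$ is
\[
\mathcal B_\lambda=\bigl\langle V_{\lambda},\in,\lambda,(z)_{z\in V_{\bar\gamma+1}}\bigr\rangle,
\]
where $\bar\gamma$ is large enough that $\tc(\{\mathscr S,\dot a,\dot L,p\})\subseteq V_{\bar\gamma}$. The index $\lambda$ ranges over a proper class $K$ of limit ordinals satisfying two conditions: $V_\lambda\models\ZF$, and the symmetric forcing relation for $\varphi$ is correctly computed in $V_\lambda$ for names in $V_\lambda$.

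For $\VP(\Pi_n)$ and for full $\VP$, we obtain $K$ from reflection together with the $\Pi_1$ criterion of Lemma~\ref{lem:ZF-rank-criterion}. The existence of such $\lambda$ follows from Lemma~\ref{lem:ZF-ranks}, relativized to reflect $\varphi$. For full $\VP$, the formula $\varphi$ is fixed, so a single reflection scheme instance suffices. In the $\Pi_n$ case we must check that $K$ is definable with the right complexity. This is the reason for working with $\lambda$ that are $\Sigma_m$-correct for a bounded $m$, or with $C^{(n)}$-type classes.

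$\VP$ yields $\lambda\ne\mu$ in $K$ and an elementary $j:\mathcal B_\lambda\to\mathcal B_\mu$ fixing $V_{\bar\gamma+1}$ pointwise, hence fixing $\mathscr S$ and its transitive closure, $\dot a$, $\dot L$ and $p$. We then pick a structure whose least name rank lies below $\lambda$ but which does not belong to $V_\lambda[G]$ trivially. The simplest choice uses the definable constant $\lambda$: let $M$ be the $\in$-least-rank witness realized by some name in $N_{\lambda}$ but not in $N_\beta$ for smaller $\beta$. The set-of-all-least-rank-names device makes this choice definable in $V_\lambda$ without Choice, so $j$ sends it to the corresponding set at $\mu$.

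Concretely, fix $\sigma\in\HS\cap V_\lambda$ with $\sigma^G=M\in\mathcal C$ of least name-rank $\beta<\lambda$ and with $j(\beta)\ne\beta$. Such $\beta$ exists: take $\beta\ge\crit(j)$, noting that $j$ moves $\lambda$ and ranks are unbounded. Then $j(\sigma)$ has least name-rank $j(\beta)\ne\beta$, so $j(\sigma)^G\ne\sigma^G$. By Lemma~\ref{lem:HS-absoluteness}, $j(\sigma)\in\HS$. By elementarity and the correctness of forcing in $V_\lambda$ and $V_\mu$, $j(\sigma)^G\in\mathcal C$, because $p$-compatible conditions in $G$ transfer while $j$ fixes conditions.

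Proposition~\ref{prop:symmetric-lift} provides $E_j\in W$ with $E_j(\tau^G)=j(\tau)^G$. Restricting $E_j$ to the universe of $M$ gives a map into $j(\sigma)^G$ that is a lift of $j$. Elementarity of this restriction follows from elementarity of the lift $j_G:V_\lambda[G]\to V_\mu[G]$, applied to the satisfaction relation of $M$, as in Lemma~\ref{lem:restriction-elementarity}. Here we use that elements of $M$ and the symbols of $L$ have names in $\HS\cap V_\lambda$, and that $j$ fixes the interpretations of $L$ because $\dot L$ is fixed. Hence $W$ contains an elementary embedding between the distinct members $M$ and $j(\sigma)^G$ of $\mathcal C$.

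The main obstacle is distinctness: guaranteeing that $j(\sigma)^G\ne\sigma^G$ without choosing names. Here the least-name-rank invariant, coded via the sets $N_\alpha$ as constants, must be verified to be respected by $j$. A secondary difficulty is bookkeeping the definability of $K$ so that the $\Pi_n$ case also goes through.
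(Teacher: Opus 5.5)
The proposal has a genuine gap at the step that is supposed to produce a structure moved by $j$. Your structures $\mathcal B_\lambda$ are indexed by an auxiliary class $K$ of ``good'' ordinals that is unrelated to the ranks of members of $\mathcal C$. So $\VP$ only tells you that $j(\lambda)=\mu$; it says nothing about what $j$ does below $\lambda$.

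Nothing excludes $\crit(j)=\lambda$, that is, $j\restr V_\lambda=\mathrm{id}$ (for instance, if the embedding that $\VP$ hands you has $\lambda$ as its critical point). In that case every name in $V_\lambda$ is fixed, and you only obtain identity maps $M\to M$.

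Even when $\kappa=\crit(j)<\lambda$, your claim that every $\beta\geq\crit(j)$ is moved is false. The map $j$ moves the ordinals in $[\kappa,j(\kappa))$, but ordinals at or above $j(\kappa)$ may well be fixed. Nothing guarantees a member of $\mathcal C$ whose invariant lies in $[\kappa,\min\{j(\kappa),\lambda\})$.

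Moreover, your invariant, the least rank of a name evaluating to $M$, depends on $G$, whereas $j$ acts only on ground-model objects. The statement ``$j(\sigma)$ has least name-rank $j(\beta)$'' would have to be forced by a condition in $G$ and computed correctly inside the ranks, and you never arrange this.

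There are two smaller slips. First, $\lambda\notin V_\lambda$, so it cannot be a constant of $\mathcal B_\lambda$. Second, the unboundedness of $K$ is asserted rather than proved: reflection gives a club of correct ranks and Lemma~\ref{lem:ZF-ranks} gives an unbounded class of $\ZF$-ranks, but a club class need not meet an unbounded class unboundedly.

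The missing idea is to index the $\VP$ class by the ranks of the members of the class themselves ($\mathcal A$ in the paper). The paper first proves, by a boundedness argument, that a single $q\in G$ has $R_q$ unbounded. Replacement in $V$ gives the bound function $b$ on conditions $p$ with $R_p$ bounded, and $\sup b``G$ is a ground-model ordinal. Here $R_q$ is the class of $\alpha$ such that, for some $\tau\in\HS$, $q\forcesS$ ``$\tau$ is an $\dot L$-structure, $\varphi(\tau,\dot a)$, and $\rank(\tau)=\check\alpha$''.

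For $\alpha\in R_q$, the paper takes $X_\alpha$ to be the set of all such $\tau$ of least name rank $\nu_\alpha$. This device only makes $\alpha\mapsto X_\alpha$ choice-free; it is not the distinctness invariant. It then applies $\VP$ to $\mathcal M_\alpha=\langle V_{\theta_\alpha},\in,\alpha,X_\alpha,(z)_{z\in V_\rho}\rangle$.

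With this setup, $j(\alpha)=\beta\neq\alpha$ is automatic. Also $j(\tau)\in j(X_\alpha)=X_\beta$ holds outright in $V$, so no correctness of the forcing relation inside the ranks is needed. Since $q\in G$, the values $\tau^G$ and $j(\tau)^G$ are members of $\mathcal A$ of ranks $\alpha\neq\beta$, hence distinct. Once that is in place, your final step via $E_j\restr|M|$ matches the paper.
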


\begin{proof}

Proposition~\ref{prop:symmetric-ZF} gives $W\models\ZF$.  Fix a formula $\varphi(x,y)$, a parameter $a\in W$, and a set-sized language $L\in W$ such that
$\mathcal A=\{M\in W:W\models\varphi(M,a)\}$ is a proper class of $L$-structures.  Choose $\dot a,\dot L\in\HS$ with $\dot a^G=a$ and $\dot L^G=L$.  For $p\in\mathbb P$, let $R_p$ be the class of ordinals $\alpha$ for which there is a name $\tau\in\HS$ satisfying
\begin{equation}\label{eq:rank-witness}
p\forcesS ``\tau\text{ is an }\dot L\text{-structure},\ \varphi(\tau,\dot a),
\text{ and }\rank(\tau)=\check\alpha.''
\end{equation}

{In \eqref{eq:rank-witness}, $\rank(\tau)$ denotes the rank of the value of $\tau$ in the generic extension.  By contrast, $\namerank(\tau)$ is the von Neumann rank of the name $\tau$ itself.}

There is a condition $q\in G$ for which $R_q$ is unbounded in $\Ord$.  Suppose not.  Let $D_{\mathrm{bd}}=\{p\in\mathbb P:R_p\text{ is bounded}\}$ and, for $p\in D_{\mathrm{bd}}$, let $b(p)$ be the least ordinal such that $R_p\subseteq b(p)$.  The domain $D_{\mathrm{bd}}$ is a subset of the set $\mathbb P$, and definability of symmetric forcing together with Replacement makes $b$ a set function in $V$.  Our supposition gives $G\subseteq D_{\mathrm{bd}}$.

In $V[G]$, set $\delta=\sup b``G$.  This is an ordinal, so preservation of ordinals gives $\delta\in V\subseteq W$.  If $M\in\mathcal A$, choose $\tau\in\HS$ with $\tau^G=M$ and let $\alpha=\rank(M)$.  By the symmetric truth lemma, some $p\in G$ forces the assertion in \eqref{eq:rank-witness}; hence $\alpha<b(p)\leq\delta$.  Thus $\mathcal A\subseteq V_\delta^W$, and Separation in $W$ makes $\mathcal A$ a set, a contradiction.

For $\alpha\in R_q$, let $\Psi_\alpha(\tau)$ assert that $\tau\in\HS$ and that \eqref{eq:rank-witness} holds with $q$ in place of $p$.  We use the following definitions:
\[
\nu_\alpha=\min\{\namerank(\tau):\Psi_\alpha(\tau)\},
\qquad
X_\alpha=\{\tau\in V_{\nu_\alpha+1}:\namerank(\tau)=\nu_\alpha
\text{ and }\Psi_\alpha(\tau)\}.
\]
{The ordinal $\nu_\alpha$ exists, $X_\alpha$ is a nonempty set by Separation, and both are uniformly definable from $\alpha$ and the fixed parameters.}

Choose $\rho>\omega$ such that $\tc(\{\mathscr S,\dot a,\dot L\})\subseteq V_\rho$.  By Lemma~\ref{lem:ZF-ranks}, for each $\alpha\in R_q$ there are arbitrarily high $\theta$ with $V_\theta\models\ZF$.  The relevant rank is
\[
\theta_\alpha=\min\bigl\{\theta:V_\theta\models\ZF\text{ and }
\theta>\max\{\rho,\alpha,\rank(X_\alpha)\}+\omega\bigr\}.
\]
In the common language consisting of a binary relation symbol, two distinguished constant symbols, and constant symbols $c_z$ for $z\in V_\rho$, use the structure
\[
\mathcal M_\alpha=
\bigl\langle V_{\theta_\alpha},\in,\alpha,X_\alpha,(z)_{z\in V_\rho}\bigr\rangle
\qquad(\alpha\in R_q).
\]

The structures $\mathcal M_\alpha$ form a definable proper class since $R_q$ is unbounded.  By $\VP$, there is an elementary embedding $j:\mathcal M_\alpha\to\mathcal M_\beta$ with $\alpha\ne\beta$.  The distinguished constants give $j(\alpha)=\beta$, $j(X_\alpha)=X_\beta$, and $j(z)=z$ for every $z\in V_\rho$.

Fix $\tau\in X_\alpha$.  Then $j(\tau)\in X_\beta$, and the definitions of $X_\alpha$ and $X_\beta$ give
\begin{equation}\label{eq:q-witness}
q\forcesS ``\tau\text{ and }j(\tau)\text{ are }\dot L\text{-structures satisfying }
\varphi(-,\dot a)\text{ with ranks }\check\alpha\text{ and }\check\beta.''
\end{equation}
Since $j$ fixes $V_\rho$ pointwise, it fixes every member of $\tc(\{\mathscr S,\dot a,\dot L\})$.

Apply Proposition~\ref{prop:symmetric-lift} to obtain the elementary lift
$j_G:V_{\theta_\alpha}[G]\to V_{\theta_\beta}[G]$ and a function $E_j\in W$ agreeing with $j_G$ on values of names in $\HS\cap V_{\theta_\alpha}$.  Put $M=\tau^G$ and $N=j(\tau)^G$.  Then $j_G(M)=N$.

If $s\in L=\dot L^G$, then $s=\sigma^G$ for some $\langle\sigma,p\rangle\in\dot L$ with $p\in G$.  Since $\sigma\in\operatorname{cl}(\dot L)\subseteq V_\rho$, we have $j(\sigma)=\sigma$, and hence $j_G(s)=s$.  Thus $j_G$ fixes $L$ pointwise.  Moreover, $j(\dot L)=\dot L$ and $j_G(L)=L$.

By the coding convention of the Introduction, $\lvert M\rvert\subseteq\tc(M)$.  An induction on name evaluation shows that every element of $\tc(M)$ is $\sigma^G$ for some $\sigma\in\operatorname{cl}(\tau)$.  Since $\tau\in\HS\cap V_{\theta_\alpha}$, every such $\sigma$ belongs to $\HS\cap V_{\theta_\alpha}$.  Hence $|M|\subseteq\operatorname{dom}(E_j)$, and $e=E_j\restr|M|$ belongs to $W$.  In $V[G]$, the map $e$ is $j_G\restr|M|$.  Since the universe of a structure is uniformly definable from its fixed set code,
\[
j_G(|M|)=|j_G(M)|=|N|,
\qquad j_G``|M|\subseteq|N|.
\]

{Satisfaction for set-sized structures is absolute between the transitive models under consideration.  Since $j_G$ fixes $L$ pointwise, elementarity gives}
\[
M\models\psi(\bar m)\quad\text{if and only if}\quad
N\models\psi(e(\bar m))
\]
{for every $L$-formula $\psi$ and every finite tuple $\bar m$ from $|M|$.  Thus $W$ regards $e$ as an $L$-elementary embedding from $M$ to $N$.}

Finally, $q\in G$ and \eqref{eq:q-witness} imply that $M,N\in\mathcal A$, with ranks $\alpha$ and $\beta$.  Thus $M\neq N$.  This proves the arbitrary instance of $\VP$, so $W\models\ZF+\VP$.
\end{proof}

\begingroup

\begin{corollary}\label{cor:Pi-preservation}
Let $n\geq1$ be standard.  If $V\models\ZF+\VP(\Pi_n)$, $\mathscr S=\langle\mathbb P,\Gamma,\mathcal F\rangle\in V$ is a set-sized symmetric system, $G\subseteq\mathbb P$ is $V$-generic, and $W=\HS^G$, then
\[
W\models\ZF+\VP(\Pi_n).
\]
\end{corollary}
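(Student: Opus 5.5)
The plan is to rerun the proof of Theorem~\ref{thm:preservation} with $\VP$ replaced by $\VP(\Pi_n)$, keeping track of the definitional complexity of each class to which $\VP$ is applied. Proposition~\ref{prop:symmetric-ZF} again gives $W\models\ZF$. The class $\{\mathcal B_\lambda\}$ in Lemma~\ref{lem:ZF-ranks} is defined by a $\Pi_1$ formula with parameter $V_{\bar\gamma+1}$, because ``$x=V_{\lambda+\omega}$'' is $\Pi_1$. So $\VP(\Pi_1)\subseteq\VP(\Pi_n)$ already yields unboundedly many $\theta$ with $V_\theta\models\ZF$, and ``$V_\theta\models\ZF$'' is $\Pi_1$ by Lemma~\ref{lem:ZF-rank-criterion}.

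Now fix a $\Pi_n$ formula $\varphi(x,y)$, a parameter $a$ and a language $L$ as before, with names $\dot a,\dot L$ and a condition $q\in G$ for which $R_q$ is unbounded. That argument uses only $\ZF$ in $V$. The difficulty is that the class $\{\mathcal M_\alpha:\alpha\in R_q\}$ is no longer obviously $\Pi_n$. Membership $\alpha\in R_q$ has an unbounded existential quantifier over $\tau\in\HS$ in front of a symmetric forcing statement. Minimality of $\nu_\alpha$ and of $\theta_\alpha$ adds further quantifiers. I would use two standard facts:
\begin{itemize}
\item[(i)] For $n\geq1$, the relation ``$p\forcesS\psi(\vec\sigma)$'' for $\Pi_n$ formulas $\psi$ is $\Pi_n$ in $V$, with parameter $\mathscr S$. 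This comes from the recursive definition of symmetric forcing, in which the quantifiers range over $\HS$ and $\HS$ is $\Delta_1$ in $\mathscr S$.
\item[(ii)] In $\ZF$ the Lévy reflection theorem gives a closed unbounded class $C^{(n)}$ of ordinals $\xi$ with $V_\xi\prec_{\Sigma_n}V$, and ``$\xi\in C^{(n)}$'' is $\Pi_n$.
\end{itemize}

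Instead of $\mathcal M_\alpha$, I would use the structures $\mathcal N_{\xi,\alpha}=\langle V_\xi,\in,\alpha,X,(z)_{z\in V_\rho}\rangle$. Here $\xi\in C^{(n)}$, $V_\xi\models\ZF$, $\alpha<\xi$, and $V_\xi\models$ ``$\alpha\in R_q$ and $X=X_\alpha$''. Membership in this class is $\Pi_n$: it is $\Pi_n\wedge\Pi_1$ together with a clause that is $\Delta_1$ in the set $V_\xi$. By (i), the $\Pi_n$ assertion \eqref{eq:rank-witness} for names in $V_\xi$ is absolute between $V_\xi$ and $V$ when $\xi\in C^{(n)}$. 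Since $X_\alpha\subseteq V_{\nu_\alpha+1}$ is defined by bounded quantification over names of a fixed rank, $V_\xi$ computes $R_q\cap\xi$ correctly, and it computes $X_\alpha$ correctly whenever $\nu_\alpha<\xi$. In particular, by taking $\xi$ large the class contains, for unboundedly many $\alpha\in R_q$, a structure in which $X$ is the true $X_\alpha$. Hence the class is proper.

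Apply $\VP(\Pi_n)$ to get $j:\mathcal N_{\xi,\alpha}\to\mathcal N_{\xi',\beta}$ between distinct members. If $\alpha=\beta$, then $j$ fixes $X$ and the argument degenerates. To prevent this, I would add to the language a constant for the least $\zeta\in C^{(n)}$ above $\alpha$ and restrict the class to $\xi=\zeta$; then distinct members have distinct $\alpha$. Inside $V_\xi$, let $\theta$ be least above $\max\{\rho,\alpha,\rank(X)\}+\omega$ with $V_\theta\models\ZF$. Such a $\theta$ exists because $V_\xi$ reflects the $\Pi_1$ property and contains unboundedly many such $\theta$. Then $j\restr V_\theta:V_\theta\to V_{j(\theta)}$ is elementary by Lemma~\ref{lem:restriction-elementarity}. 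From here the proof of Theorem~\ref{thm:preservation} goes through verbatim: Proposition~\ref{prop:symmetric-lift} produces $E_j\in W$, and its restriction to $|M|$ is an $L$-elementary embedding from $M=\tau^G$ onto $N=j(\tau)^G$. These are distinct members of $\mathcal A$, since their ranks are $\alpha\neq\beta$.

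I expect the complexity bookkeeping to be the main obstacle. This means checking (i) for the symmetric forcing relation relativized to $\HS$, and verifying that every ingredient of the relativized definition of $X_\alpha$ inside $V_\xi$ is absolute. If (i) turns out to give only $\Pi_{n}$ up to an extra bounded quantifier over names, I would absorb that quantifier into $V_\xi$ using the correctness of $C^{(n)}$.
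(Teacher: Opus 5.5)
\paragraph{Gap in the final class.} Your route differs from the paper's. You evaluate every clause inside a $\Sigma_n$-correct rank $V_\xi$, whereas the paper never reflects. It instead carries codes $c=\langle\xi,V_\xi,V_{\xi+1},C\rangle$ as distinguished constants, turning each $\Sigma_n$ minimality clause (no witness name of smaller rank; no $\ZF$-rank between the bound and $\theta_\alpha$) into a $\Pi_n$ clause, uniformly for all $n\geq1$.

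As written, the class you finally feed to $\VP(\Pi_n)$ does not work. Once you pin $\xi$ to $\zeta_\alpha=\min(C^{(n)}\setminus(\alpha+1))$, your clause $V_\xi\models\ZF$ never holds. Consider the Tarski--Vaught closure sequence starting at $\alpha+1$: each term is the least rank containing witnesses for all true $\Sigma_n$ statements with parameters in the previous rank. This sequence is definable over $V_{\zeta_\alpha}$, because $V_{\zeta_\alpha}$ evaluates $\Sigma_n$ truth correctly. Either it reaches $\zeta_\alpha$ through a witness-rank map defined on a set in $V_{\zeta_\alpha}$, or it is an $\omega$-sequence cofinal in $\zeta_\alpha$. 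In both cases Replacement fails in $V_{\zeta_\alpha}$, so your class is empty.

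Dropping that clause is harmless, since you only need $\ZF$ in $V_\theta$. But then properness is unsupported. You can no longer ``take $\xi$ large'', and $\alpha\in R_q$ is a $\Sigma_{n+1}$ statement, so $\Sigma_n$-correctness of $V_{\zeta_\alpha}$ does not place a witness name inside it. Your assertion that $V_\xi$ computes $R_q\cap\xi$ correctly is therefore unjustified. You need an extra step. One option is a name-rank bound: by induction on $\gamma$, every element of $W$ of rank below $\gamma$ has a name in $\HS\cap V_{h(\gamma)}$ for a $\Delta_1$ function $h$ of $\gamma$ and $\rank(\mathbb P)$. To get it, replace a witness $\tau$ by $\{\langle\upsilon,r\rangle:\upsilon\in\HS\cap V_\beta,\ r\forcesS\upsilon\in\tau\}$ for suitable $\beta$; this name is fixed by $\sym(\tau)$ and forced by $q$ to equal $\tau$. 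Alternatively, pin $\xi$ as the least point of $C^{(n)}$ with $V_\xi\models\alpha\in R_q$, and verify that minimality inside $V_\xi$.

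\paragraph{The case $n=1$.} Here your fact (ii) is only a $\ZFC$ fact. The $\Pi_1$ definition of $C^{(1)}$ goes through ``$\xi$ is an uncountable cardinal with $V_\xi=H_\xi$'', which uses Choice. In $\ZF$ the direct definition of $C^{(1)}$ is only evidently $\Delta_2$ (compare the proof of Lemma~\ref{lem:Cn-downward}). So your class is not shown to be $\Pi_1$, and $\VP(\Pi_1)$ cannot be applied to it.

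In addition, $\xi\in C^{(1)}$ does not reflect the $\Sigma_2$ statement that ranks satisfying the $\Pi_1$ criterion of Lemma~\ref{lem:ZF-rank-criterion} are unbounded. So the $\theta$ you choose inside $V_\xi$ need not exist.

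For $n\geq2$ neither point arises, and with the repairs above your reflection argument is a workable alternative to the paper's. For $n=1$ you need something like the paper's coding of minimality clauses by rank-segment codes. That coding requires no correctness of the ambient rank at all.
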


\begin{proof}
{Symmetric forcing preserves L\'evy complexity: for each standard $k$, forcing for a $\Sigma_k$ formula is $\Sigma_k$, and forcing for a $\Pi_k$ formula is $\Pi_k$, uniformly in the formula code.  For bounded formulas this follows by induction on formulas and name rank; atomic clauses lower name rank, and bounded quantifiers range over the domains of names.  At an existential step, Collection over $\{r\in\mathbb P:r\leq p\}$ records names on a dense set below $p$.  Separation then yields the required forcing relation at the same complexity.}

We use the following elementary coding observation.  If $S(z)$ is $\Sigma_k$, $k\geq1$, write it as $\exists u\,\psi(z,u)$ with $\psi$ in $\Pi_{k-1}$.  When $S(z)$ holds, let $\xi$ be the least rank of such a $u$ and let $C$ be the set of all such $u$ of rank $\xi$.  There is a uniform $\Pi_k$ formula $\chi_S(z,c)$ saying that
\[
c=\langle\xi,V_\xi,V_{\xi+1},C\rangle
\]
has these properties.  Indeed, once the two rank segments are included in $c$, all quantifiers used for minimality and for the exact description of $C$ are bounded; the equations defining the rank segments are $\Pi_1$.  Hence $c$ is unique and $\chi_S(z,c)$ implies $S(z)$.

The $\Pi_n$ argument uses this observation twice: to record the least name rank together with all witnesses at that rank, and to record the least rank satisfying Lemma~\ref{lem:ZF-rank-criterion}.

{Lemma~\ref{lem:ZF-ranks} gives unboundedly many ordinals satisfying the $\Pi_1$ condition of Lemma~\ref{lem:ZF-rank-criterion}.  For $b<\theta$, the statement that no ordinal $\eta$ with $b<\eta<\theta$ satisfies this condition is $\Sigma_1$.  By the preceding observation, this statement can be expressed by a $\Pi_1$ formula involving an additional set $d$, uniquely determined by $b$ and $\theta$.  Thus the pair $(\theta,d)$, where $\theta$ is the least such ordinal above $b$, is $\Pi_1$-definable from $b$.}  That set and the satisfaction relation of $V_\theta$ lie in $V_{\theta+\omega}$.

Now let a $\Pi_n$ formula $\varphi(x,a)$ define a proper class $\mathcal A$ of structures in a set-sized language $L$ in $W$.  Choose names $\dot a,\dot L\in\HS$.  Define $R_p$ as in the proof of Theorem~\ref{thm:preservation}.  The same boundedness argument gives $q\in G$ for which $R_q$ is unbounded.  Let $\Psi(\alpha,\tau)$ say that $\tau\in\HS$ and that $q$ forces $\tau$ to be an $\dot L$-structure of rank $\alpha$ satisfying $\varphi(\tau,\dot a)$.  By the first paragraph, $\Psi$ is $\Pi_n$.

For $\alpha\in R_q$, let $\nu_\alpha$ be the least von Neumann rank of a name $\tau$ satisfying $\Psi(\alpha,\tau)$.  We use the following notation:
\[
U_\alpha=V_{\nu_\alpha},\qquad U_\alpha^+=V_{\nu_\alpha+1},\qquad
X_\alpha=\{\tau\in U_\alpha^+:\namerank(\tau)=\nu_\alpha\text{ and }\Psi(\alpha,\tau)\}.
\]
The assertions
\[
\forall\tau\in U_\alpha\,\neg\Psi(\alpha,\tau)
\quad\text{and}\quad
\forall\tau\in U_\alpha^+\bigl((\namerank(\tau)=\nu_\alpha\wedge\Psi(\alpha,\tau))\longrightarrow\tau\in X_\alpha\bigr)
\]
are $\Sigma_n$.  Apply the preceding coding observation separately to them, and let $c_\alpha$ be the ordered pair of the resulting codes.  Its first component records that no name of rank below $\nu_\alpha$ satisfies $\Psi(\alpha,\cdot)$; its second records that every name of rank $\nu_\alpha$ satisfying $\Psi(\alpha,\cdot)$ belongs to $X_\alpha$.  Keeping $c_\alpha$ as a distinguished component lets the defining formula state both coded $\Pi_n$ conditions without another existential quantifier.  Together with the rank-segment equations and the assertion that $X_\alpha$ is the nonempty set of rank-$\nu_\alpha$ names satisfying $\Psi(\alpha,\cdot)$, these clauses give a $\Pi_n$ definition from $\alpha$ of
\[
\langle\nu_\alpha,U_\alpha,U_\alpha^+,X_\alpha,c_\alpha\rangle.
\]

Choose $\rho>\omega$ with $\tc(\{\mathscr S,\dot a,\dot L\})\subseteq V_\rho$.  Let $\theta_\alpha$ be the least limit ordinal above $\max\{\rho,\allowbreak\alpha,\allowbreak\rank(\langle\nu_\alpha,\allowbreak U_\alpha,\allowbreak U_\alpha^+,\allowbreak X_\alpha,\allowbreak c_\alpha\rangle)\}+\omega$ admitting no cofinal map $f:a\to\theta_\alpha$ with $a\in V_{\theta_\alpha}$, and let $d_\alpha$ be the unique code witnessing this leastness in the $\Pi_1$ definition described above.  In the set-sized language with membership, finitely many distinguished constants, and constants for $V_\rho$, let
\[
\mathcal N_\alpha=\bigl\langle V_{\theta_\alpha+\omega},\in,
\alpha,\nu_\alpha,U_\alpha,U_\alpha^+,X_\alpha,c_\alpha,
\theta_\alpha,d_\alpha,(z)_{z\in V_\rho}\bigr\rangle.
\]
The rank-segment equations and the definition of $\theta_\alpha$ are $\Pi_1$, the least-name-rank clause is $\Pi_n$, and the other clauses are bounded or $\Delta_1$.  Thus $\{\mathcal N_\alpha:\alpha\in R_q\}$ is a $\Pi_n$-definable proper class: $R_q$ is unbounded, and the tuple is unique for each $\alpha$.

Apply $\VP(\Pi_n)$ in $V$.  There are distinct $\mathcal N_\alpha,\mathcal N_\beta$ and an elementary embedding $j:\mathcal N_\alpha\to\mathcal N_\beta$.  Uniqueness gives $\alpha\ne\beta$, and the distinguished constants give $j(\alpha)=\beta$, $j(X_\alpha)=X_\beta$, and $j(\theta_\alpha)=\theta_\beta$.  {Since $\operatorname{Sat}(V_{\theta_\alpha})\in V_{\theta_\alpha+\omega}$, Lemma~\ref{lem:restriction-elementarity} shows that $j_0=j\restr V_{\theta_\alpha}:V_{\theta_\alpha}\longrightarrow V_{\theta_\beta}$ is elementary.}  Both ranks satisfy $\ZF$, and the constants from $V_\rho$ make $j_0$ fix the symmetric system, every condition, and the names $\dot a$ and $\dot L$.

{Choose $\tau\in X_\alpha$.}  Then $j_0(\tau)\in X_\beta$, and $q$ forces that the two names denote members of $\mathcal A$ of ranks $\alpha$ and $\beta$.  Proposition~\ref{prop:symmetric-lift} lifts $j_0$ through $G$.  The restriction of the lifted map to the universe of $\tau^G$ lies in $W$.  {The lift fixes $L$ pointwise, so this restriction is an elementary $L$-embedding from $\tau^G$ to $j_0(\tau)^G$.}  Since $q\in G$ and $\alpha\ne\beta$, these are distinct members of $\mathcal A$.
\end{proof}

This corollary, together with Theorem~\ref{thm:preservation}, proves Theorem~A.

\endgroup

\begingroup

\section{L\"owenheim--Skolem cardinals}\label{sec:LS}

We use Usuba's definitions of weakly LS and LS cardinals \cite[Definition~1.1]{Usuba}.

\begin{definition}\label{def:LS}
Let $\kappa$ be an uncountable cardinal.
\begin{enumerate}
\item $\kappa$ is \emph{weakly L\"owenheim--Skolem}, or \emph{weakly LS}, if for every $\gamma<\kappa$, every $\alpha\geq\kappa$, and every $x\in V_\alpha$, there is $X\prec V_\alpha$ such that $V_\gamma\subseteq X$, $x\in X$, and the transitive collapse of $X$ belongs to $V_\kappa$.
\item $\kappa$ is \emph{L\"owenheim--Skolem}, or \emph{LS}, if for every $\gamma<\kappa$, every $\alpha\geq\kappa$, and every $x\in V_\alpha$, there are $\beta\geq\alpha$ and $X\prec V_\beta$ such that $V_\gamma\subseteq X$, $x\in X$, the transitive collapse of $X$ belongs to $V_\kappa$, and
\[
{}^{V_\gamma}(X\cap V_\alpha)\subseteq X.
\]
The last clause means that every function from $V_\gamma$ into $X\cap V_\alpha$ belongs to $X$.
\end{enumerate}
\end{definition}

\begin{lemma}\label{lem:limits-LS}
A limit of LS cardinals is LS, and a limit of weakly LS cardinals is weakly LS.
\end{lemma}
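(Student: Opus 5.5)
Let $\kappa$ be a limit of LS cardinals, i.e.\ $\kappa=\sup C$ where $C$ is a set of LS cardinals cofinal in $\kappa$. The plan is to check the definition at $\kappa$ directly, by passing each instance down to a single LS cardinal $\delta\in C$ lying between $\gamma$ and $\kappa$. First, $\kappa$ is an uncountable cardinal, since it is a limit of uncountable cardinals.

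Fix $\gamma<\kappa$, $\alpha\geq\kappa$, and $x\in V_\alpha$. Because $C$ is cofinal in $\kappa$, choose $\delta\in C$ with $\gamma<\delta<\kappa$. Then $\alpha\geq\kappa>\delta$, so the instance $(\gamma,\alpha,x)$ is legitimate for $\delta$. Since $\delta$ is LS, there are $\beta\geq\alpha$ and $X\prec V_\beta$ such that:
\begin{itemize}
\item $V_\gamma\subseteq X$ and $x\in X$;
\item the transitive collapse of $X$ lies in $V_\delta\subseteq V_\kappa$;
\item ${}^{V_\gamma}(X\cap V_\alpha)\subseteq X$.
\end{itemize}
These are exactly the clauses required for $\kappa$ with the same $\gamma,\alpha,x$. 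So $\kappa$ is LS. The weakly LS case is identical: take $\beta=\alpha$ and drop the closure clause.

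No choice is used at any step. We pick only a single $\delta$, and every clause of the definition is monotone in $\kappa$: the lower bound $\delta\leq\alpha$ holds automatically, and $V_\delta\subseteq V_\kappa$. I expect no real obstacle. The only point needing care is that $\gamma<\delta$ is strict, so that $\gamma$ is a legal parameter for $\delta$. This is secured because $\delta$ is chosen above $\gamma$, which cofinality of $C$ in $\kappa$ allows.
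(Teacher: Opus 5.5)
Your proof is correct and matches the paper's argument: choose one LS (respectively weakly LS) cardinal $\delta$ with $\gamma<\delta<\kappa$ and reuse its witness, since the collapse lies in $V_\delta\subseteq V_\kappa$. The paper also spells out in $\ZF$ why $\kappa$ is a cardinal: a bijection from $\kappa$ onto a smaller ordinal $\alpha$ would restrict to an injection of an LS cardinal above $\alpha$ into $\alpha$. Your remark that a supremum of cardinals is a cardinal covers the same point.
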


\begin{proof}
\begingroup

Suppose that $\kappa$ is a limit of LS cardinals.  It is uncountable, and it is a cardinal: if $b:\kappa\to\alpha$ were a bijection for some $\alpha<\kappa$, an LS cardinal $\mu$ with $\alpha<\mu<\kappa$ would make $b\restr\mu$ an injection from the initial ordinal $\mu$ into the smaller ordinal $\alpha$, a contradiction.  Now fix $\gamma<\kappa\leq\alpha$ and $x\in V_\alpha$.  Choose an LS cardinal $\mu$ with $\gamma<\mu<\kappa$, and apply its LS property to $(\gamma,\alpha,x)$.  The submodel supplied by the LS property of $\mu$ has transitive collapse in $V_\mu\subseteq V_\kappa$, so it also witnesses the LS property for $\kappa$.  The weakly LS argument is the same.
\endgroup
\end{proof}

For $n\geq1$, put $C^{(n)}=\{\alpha:V_\alpha\prec_{\Sigma_n}V\}$.

\begin{theorem}\label{thm:Vopenka-LS}
The theory $\ZF+\VP$ proves that there is a proper class of LS cardinals.
\end{theorem}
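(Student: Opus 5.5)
We plan to show, for each fixed $\gamma_0$, that some LS cardinal lies above $\gamma_0$. We will do this by taking $\kappa$ to be the critical point of a Vop\v{e}nka embedding, chosen so that $\kappa$ reflects the relevant instances of the LS property downward. Unlike the $\ZFC$ argument, we cannot use a supercompactness measure. Instead we will pull back the image submodel $j``V_\beta$.

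We first need a proper class of embeddings with critical point $\kappa$ and arbitrary targets. Suppose that for some $\gamma<\kappa$, some $\alpha\geq\kappa$ and some $x\in V_\alpha$, no $\beta\geq\alpha$ and $X\prec V_\beta$ witness the LS property for $\kappa$. Let $F(\kappa)$ be the least such $\alpha$, defined on the class $D$ of ``bad'' ordinals $\kappa>\gamma_0$ (including non-cardinals, where we just need failure of the property). Assume toward a contradiction that there is no LS cardinal above $\gamma_0$. Then every uncountable cardinal above $\gamma_0$ is bad, so $F$ is defined on a proper class. Consider the structures
\[
\mathcal C_\lambda=\bigl\langle V_{\eta_\lambda},\in,\lambda,(z)_{z\in V_{\gamma_0+1}}\bigr\rangle,
\]
where $\lambda$ ranges over a closed unbounded class of limit points of $F$ and $\eta_\lambda$ is a large $\ZF$-rank above $\lambda$, obtained from Lemma~\ref{lem:ZF-ranks} and Lemma~\ref{lem:ZF-rank-criterion} with $\eta_\lambda$ closed under $F$. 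Applying $\VP$ gives $j:\mathcal C_\lambda\to\mathcal C_\mu$ with $\lambda<\mu$ (the case $\lambda>\mu$ being excluded by well-foundedness, or else handled by symmetry). As in the proof of Lemma~\ref{lem:ZF-ranks}, $j$ has a critical point $\kappa$ with $\gamma_0<\kappa\leq\lambda$, $j\restr V_\kappa=\mathrm{id}$ and $j(\kappa)>\kappa$. The key point is that $\kappa$ is a cardinal closed under $F$, so $F(\kappa)<j(\kappa)$ can be arranged. If this closure fails, we will replace $\kappa$ by a suitable limit point, using Lemma~\ref{lem:limits-LS}.

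The main step is to show that $j(\kappa)$, rather than $\kappa$, satisfies the LS instance for parameters below $\kappa$, and then to reflect this via elementarity. Fix $\gamma<\kappa$, $\alpha\geq\kappa$ with $\alpha<\lambda$ suitably small, and $x\in V_\alpha$. Take $\beta=\alpha+1$ (or larger) and consider $Y=j``V_\beta\subseteq V_{j(\beta)}$. Since $j\restr V_\beta:V_\beta\to V_{j(\beta)}$ is elementary (Lemma~\ref{lem:restriction-elementarity}), we get $Y\prec V_{j(\beta)}$. Moreover $V_\gamma\subseteq Y$ because $j$ fixes $V_\gamma$, $j(x)\in Y$, and the transitive collapse of $Y$ is $V_\beta$, which has rank less than $j(\kappa)$ once $\beta<j(\kappa)$. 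The closure clause follows because a function $g:V_\gamma\to Y\cap V_{j(\alpha)}$ is of the form $j\circ h$ for some $h:V_\gamma\to V_\alpha$ (since $j$ is injective, $h$ is defined without Choice), and then $j(h)=j\circ h$ because $j$ fixes $V_\gamma$ pointwise. Hence $g\in Y$. Consequently, $\mathcal C_\mu$ satisfies ``$j(\kappa)$ is LS for the parameters $(\gamma,j(\alpha),j(x))$'', and elementarity pulls this back to show that $\kappa$ is LS for $(\gamma,\alpha,x)$. All of this happens inside $V_{\eta}$, which is correct enough about LS instances at these ranks because $\eta$ is chosen large.

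The main obstacle will be the bookkeeping that makes the LS instances below $\eta_\lambda$ suffice. We need $\beta<j(\kappa)$, yet $\alpha$ must range over all ordinals. We plan to handle this by choosing the least counterexample $F(\kappa)$ and putting it inside the structure, so that $j(F(\kappa))=F(j(\kappa))$. Failure at $\kappa$ would then transfer to failure at $j(\kappa)$ with the witness $\alpha'=j(F(\kappa))$ and $x'=j(x)$. The construction above produces exactly such a witness, using $\beta=F(\kappa)+1$, provided $\eta_\lambda$ is taken above $j(\beta)$. This requires choosing the constants carefully, and possibly passing to $C^{(n)}$-style correct ranks so that ``least counterexample'' is computed correctly inside $V_{\eta}$.
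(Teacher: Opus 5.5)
\textbf{The proposal has a genuine gap at the step you call the key point.} Your image argument at the critical point $\kappa$ needs $F(\kappa)<j(\kappa)$. The collapse of $j``V_\beta$ is $V_\beta$, which lies in $V_{j(\kappa)}$ only if $\beta<j(\kappa)$, while $\beta\geq F(\kappa)$. Nothing you set up gives this inequality:
\begin{itemize}
\item Choosing $\lambda$ and $\eta_\lambda$ among closure points of $F$ says nothing about the critical point.
\item Your structures $\mathcal C_\lambda$ do not carry $F$, so closure of $\kappa$ under $F$ would not transfer to $j(\kappa)$ anyway.
\item Even with $F$ added as a predicate, for bad $\xi<\kappa$ you would only learn that $j(F(\xi))=F(\xi)$. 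Without Choice you cannot rule out a fixed point of $j$ above its critical point. The usual $\ZFC$ argument that the critical point lies in a given club (otherwise $j$ fixes the least element of the club above $\kappa$) uses Kunen's inconsistency theorem, which needs Choice.
\item The fallback via Lemma~\ref{lem:limits-LS} cannot work: under your contradiction hypothesis there are no LS cardinals above $\gamma_0$ whose limit could be taken.
\item Your last paragraph is circular for the same reason. The set $j``V_{F(\kappa)+1}$ witnesses the instance at $j(\kappa)$ only if $F(\kappa)+1<j(\kappa)$. Separately, $\beta=F(\kappa)+1$ is too small to contain the maps $V_\gamma\to V_\alpha$ needed for the closure clause.
\end{itemize}

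\textbf{The missing idea} is that the pull-back never needs the tested cardinal to be the critical point. It only needs $j\restr V_\gamma=\mathrm{id}$ and that $j$ sends the tested cardinal above its counterexample. One way to arrange this:
\begin{enumerate}
\item For an uncountable cardinal $\kappa>\gamma_0$, let $g(\kappa)<\kappa$ be the least failing $\gamma$. A $\ZF$ pressing-down argument gives some $\gamma^*$ whose fibre $g^{-1}(\gamma^*)$ is unbounded. Otherwise, let $\delta>\gamma_0$ be a cardinal that is a closure point of $\gamma\mapsto\sup g^{-1}(\gamma)$; then $\delta\leq\sup g^{-1}(g(\delta))<\delta$, a contradiction.
\item Let $F(\lambda)$ be the least $\alpha$ at which some instance $(\gamma^*,\alpha,x)$ fails. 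Thin the fibre recursively to a class $S$ such that $\mu>F(\lambda)+\omega$ whenever $\lambda<\mu$ are in $S$.
\item Apply $\VP$ to the structures $\langle V_{\eta_\lambda},\in,\lambda,(z)_{z\in V_{\gamma^*+1}}\rangle$ for $\lambda\in S$, where $\eta_\lambda$ is a limit ordinal above $F(\lambda)+\omega\cdot2$.
\end{enumerate}
Then $\crit(j)>\gamma^*$, and $j(\lambda)=\mu>F(\lambda)+\omega$ because $j(\lambda)\geq\lambda$. Your image computation with $\beta=F(\lambda)+\omega$ then refutes the failure at $\lambda$ itself.

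The paper obtains this variable-critical-point behaviour differently. It cites Mohammd's theorem that $\VP(\Pi_2)$ yields a proper class of $2$-choiceless supercompact cardinals, and then takes $X=j``V_{\bar\lambda}$ for a small embedding $j:V_{\bar\lambda}\to V_\lambda$.
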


\begin{proof}
$\VP$ includes $\VP(\Pi_2)$.  By \cite[Theorem~3.1]{Mohammd}, there is a proper class of $1$-choiceless extendible ordinals.  Each is a cardinal by the observation following \cite[Definition~2.1]{Mohammd}, and \cite[Theorem~2.8]{Mohammd} identifies them with the $2$-choiceless supercompact cardinals.  It is therefore enough to prove that every $2$-choiceless supercompact cardinal is LS.

 Let $\kappa$ be $2$-choiceless supercompact, and fix $\delta<\kappa\leq\alpha$ and $x\in V_\alpha$.  Choose $\lambda\in C^{(2)}$ above $\alpha+\omega$.  Apply \cite[Definition~2.3]{Mohammd} with lower bound $\delta$ and parameter $a=\langle\alpha,x,\alpha+\omega\rangle$.  We obtain $\bar\lambda<\kappa$ in $C^{(2)}$, an element $\bar a\in V_{\bar\lambda}$, and an elementary embedding $j:V_{\bar\lambda}\longrightarrow V_\lambda$ such that $\crit(j)>\delta$ and $j(\bar a)=a$.  Decoding $\bar a$ gives ordinals $\bar\alpha<\bar\lambda$ and $\bar x\in V_{\bar\alpha}$ with $\bar a=\langle\bar\alpha,\bar x,\bar\alpha+\omega\rangle$, $j(\bar\alpha)=\alpha$, $j(\bar x)=x$, and $\bar\alpha+\omega<\bar\lambda$.  Moreover $\bar\alpha>\delta$, because $j$ fixes every ordinal below $\crit(j)$ whereas $j(\bar\alpha)=\alpha\geq\kappa>\delta$.

Put $X=j``V_{\bar\lambda}$.  Then $X\prec V_\lambda$, $V_\delta\subseteq X$, $x\in X$, and the transitive collapse of $X$ is $V_{\bar\lambda}\in V_\kappa$.  Moreover, $X\cap V_\alpha=j``V_{\bar\alpha}$.  If $f:V_\delta\to X\cap V_\alpha$, let $\bar f(u)$ be the unique $y\in V_{\bar\alpha}$ such that $j(y)=f(u)$.  Replacement gives $\bar f$, and $\bar\alpha+\omega<\bar\lambda$ implies $\bar f\in V_{\bar\lambda}$.  Since $j$ fixes $V_\delta$ pointwise, $j(\bar f)=f$.  Thus ${}^{V_\delta}(X\cap V_\alpha)\subseteq X$, and $\kappa$ is LS.
\end{proof}

\begin{corollary}\label{cor:singular-LS}
Above every ordinal there is a singular LS cardinal of cofinality $\omega$.
\end{corollary}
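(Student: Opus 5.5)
We work in $\ZF+\VP$, so Theorem~\ref{thm:Vopenka-LS} applies: the class of LS cardinals is unbounded in $\Ord$. Fix an ordinal $\gamma$. By recursion on $n<\omega$ we build an increasing sequence of LS cardinals $\kappa_n$, each above $\gamma$. Let $\kappa_0$ be the least LS cardinal greater than $\gamma$. Given $\kappa_n$, let $\kappa_{n+1}$ be the least LS cardinal greater than $\kappa_n$. No choice is needed, because each step takes a least element of a definable class of ordinals. The recursion theorem in $\ZF$ then yields the sequence $\langle\kappa_n:n<\omega\rangle$ as a set.

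Put $\kappa=\sup_n\kappa_n$. Since the sequence is strictly increasing, $\kappa$ is a limit of LS cardinals. Lemma~\ref{lem:limits-LS} therefore shows that $\kappa$ is LS; in particular it is an uncountable cardinal. Also $\kappa>\gamma$.

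It remains to check that $\cf(\kappa)=\omega$, so that $\kappa$ is singular. The map $n\mapsto\kappa_n$ is cofinal in $\kappa$ and has domain $\omega$. Hence $\cf(\kappa)\le\omega$. The cofinality cannot be finite, because $\kappa$ is a limit ordinal (it is a supremum of a strictly increasing $\omega$-sequence). So $\cf(\kappa)=\omega$. Finally $\kappa>\omega$, since already $\kappa_0$ is uncountable. Thus $\kappa$ is a singular LS cardinal of cofinality $\omega$ above $\gamma$.

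The only point needing care is avoiding choice. This is handled by always taking least elements, so the cofinal $\omega$-sequence is explicitly definable. Cofinality is meant in the ordinal sense (least order type of a cofinal subset), which is the appropriate notion without $\AC$. There is no real obstacle: the substantive work is Theorem~\ref{thm:Vopenka-LS} and Lemma~\ref{lem:limits-LS}.
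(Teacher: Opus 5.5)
Your proposal is correct and is essentially the paper's proof: recursively take the least LS cardinal above the preceding one (using Theorem~\ref{thm:Vopenka-LS}), apply Lemma~\ref{lem:limits-LS} to the supremum, and let the $\omega$-sequence witness $\cf(\kappa)=\omega$. Your remarks on avoiding choice and on $\kappa>\omega$ simply make explicit what the paper leaves implicit.
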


\begin{proof}
Starting above the given ordinal, recursively take the least LS cardinal above the preceding one and let $\kappa$ be the supremum of the resulting strictly increasing $\omega$-sequence.  Lemma~\ref{lem:limits-LS} makes $\kappa$ LS, and the sequence witnesses $\cf(\kappa)=\omega$.
\end{proof}

\begin{corollary}\label{cor:class-force-ZFC}
The theory $\ZF+\VP$ proves that there is a definable class forcing which forces $\ZFC$.
\end{corollary}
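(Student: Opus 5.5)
We will obtain the corollary from Theorem~\ref{thm:Vopenka-LS} and Usuba's results on LS cardinals \cite{Usuba}. Usuba shows in $\ZF$ that if there is a proper class of LS cardinals, then $V$ is a symmetric extension of a ground model of $\ZFC$. He also shows that $\ZFC$ can be forced over $V$ by a definable class forcing, namely a class-length product or iteration of collapses $\Col(\omega,V_\kappa)$, or of variants indexed by the LS cardinals. So the plan is to supply Usuba's hypothesis and then invoke his theorem. If the precise statement in \cite{Usuba} is phrased differently, we will instead reproduce the relevant part of his argument directly.

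The first step is the hypothesis: Theorem~\ref{thm:Vopenka-LS} provides a proper class of LS cardinals. Corollary~\ref{cor:singular-LS} provides singular ones of countable cofinality, should Usuba's construction need them.

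The second step fixes the forcing. We define $\mathbb C$ as the class-length finite-support product (or Easton-support iteration) of $\Col(\omega,V_\kappa)$, with $\kappa$ ranging over a definable increasing sequence of LS cardinals. The sequence is defined by taking at each stage the least LS cardinal above the previous one and taking limits at limit stages; Lemma~\ref{lem:limits-LS} guarantees that the limit stages are again LS. Since LS is first-order definable, this class forcing is definable without parameters.

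The third step shows that $\mathbb C$ forces $\ZFC$. Choice is the easier half: every set lies in some $V_\kappa$, and $V_\kappa$ is collapsed to $\omega$ at stage $\kappa$, so it becomes well-orderable. The real work is showing that $\mathbb C$ preserves $\ZF$, that is, that it is tame (pretame), so that Replacement survives. This is the main obstacle. Here we will use the closure clause ${}^{V_\gamma}(X\cap V_\alpha)\subseteq X$ in the definition of LS. Following Usuba, we will factor $\mathbb C$ at each LS cardinal $\kappa$ into a set-sized part below $\kappa$ and a tail. We then show that the tail adds no new subsets of $V_\gamma$ for $\gamma<\kappa$. The argument takes elementary submodels $X$ as in the LS property, with small transitive collapse and closure under $V_\gamma$-sequences, and builds generics over them. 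These submodels replace the closure arguments that would otherwise need $\AC$. If the full product turns out to be awkward, we fall back on Usuba's ground-model construction: $V$ is a symmetric extension $W(\ldots)$ of some $\ZFC$ ground, and forcing the corresponding generic recovers a $\ZFC$ model. Either route yields a definable class forcing forcing $\ZFC$.
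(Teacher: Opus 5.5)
Your top-level reduction is exactly the paper's proof. Theorem~\ref{thm:Vopenka-LS} gives a proper class of LS cardinals, and Usuba's Choice-restoration theorem \cite[Corollary~4.8]{Usuba} then yields a definable class forcing that forces $\ZFC$. The paper does nothing beyond citing these two results, so that citation is all you need.

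Your second and third steps, however, should not be offered as a reconstruction of Usuba's argument, because the forcing $\mathbb C$ you define cannot force $\ZF$ under any support. Each coordinate generic lies in the extension, so for a proper class of $\kappa$ the extension contains a surjection from $\omega$ onto $V_\kappa$. Every ordinal lies in such a $V_\kappa$, and class forcing adds no ordinals, so every ordinal would be countable in the extension. This contradicts Hartogs's theorem. The same defect appears inside your own sketch: a tail containing $\Col(\omega,V_{\kappa'})$ adds new reals, so the claim that the tail adds no new subsets of $V_\gamma$ for $\gamma<\kappa$ is false for $\mathbb C$. Any correct version must well-order the ground ranks in order types unbounded in the ordinals, for instance well-ordering $V_{\kappa_{\alpha+1}}$ in type $\kappa_\alpha$. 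That is also the situation your LS-submodel distributivity argument presupposes.

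Likewise, drop the claim that Usuba exhibits $V$ as a symmetric extension of a $\ZFC$ ground, together with the fallback built on it. For a set-sized system this would give $\mathsf{SVC}$. Proposition~\ref{prop:SVC-restoration} would then settle Question~\ref{q:strong-restoration}, which the paper leaves open, so this claim is not available from the cited results.
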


\begin{proof}
Combine Theorem~\ref{thm:Vopenka-LS} with Usuba's Choice-restoration theorem \cite[Corollary~4.8]{Usuba}; the resulting class forcing forces $\ZFC$.
\end{proof}
\endgroup

\begingroup

\section{Consistency transfer to Choice}\label{sec:consistency-transfer}

Proposition~\ref{prop:OH-pruning} reduces the forward consistency implication to a model with unbounded $C^{(n)}$-extendible cardinals for every standard $n$.  If this model satisfies Choice, it already satisfies $\ZFC+\VP$.  Otherwise, for each finite $F\subseteq\ZFC+\VP$, Theorem~\ref{thm:finite-restoration} produces a $\ZFC$ rank model satisfying $F$.  Compactness then yields a model of $\ZFC+\VP$.

\subsection*{{Choiceless extendibility}}

Following \cite[Definition~6.6]{Mohammd}, let $E_n(\kappa)$ assert that $\kappa$ is a cardinal and that, whenever $\lambda>\kappa$ belongs to $C^{(n)}$, there are $\theta>\lambda$ in $C^{(n)}$ and an elementary embedding $j:V_\lambda\to V_\theta$ with $\crit(j)=\kappa$ and $j(\kappa)>\lambda$.  We call such a $\kappa$ $C^{(n)}$-extendible; over $\ZFC$, this agrees with Bagaria's notion.  Let $\UE$, for unbounded extendibility, be the recursive scheme asserting that the $E_n$-cardinals are unbounded for every $n\geq1$.
This fixed-critical-point notion differs from the variable-critical-point supercompactness used in Theorem~\ref{thm:Vopenka-LS}.

\begin{lemma}\label{lem:OH-VP}
Over $\ZF$, $\UE$ implies $\VP$.
\end{lemma}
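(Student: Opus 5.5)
Fix a formula $\varphi(x,y)$, a parameter $a$, and a set-sized language $L$ with $\mathcal A=\{M:\varphi(M,a)\}$ a proper class of $L$-structures. The plan is to find $n$ so large that $\varphi$ is $\Sigma_n$ (or $\Pi_n$), together with an $E_{n+1}$-cardinal $\kappa$ such that $a,L\in V_\kappa$. $\UE$ supplies such a $\kappa$ above $\rank(a)+\rank(L)$. The aim is then to produce $M\in\mathcal A$ of rank $\ge\kappa$ and an elementary embedding into another member of $\mathcal A$ by applying a $C^{(n+1)}$-extendibility embedding.

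First, choose $\lambda\in C^{(n+1)}$ with $\lambda>\kappa$ such that some $M\in\mathcal A$ has $\kappa\le\rank(M)<\lambda$. This is possible because $\mathcal A$ is unbounded in rank and $C^{(n+1)}$ is a closed unbounded class; its unboundedness over $\ZF$ follows by the usual reflection argument. Since $\lambda\in C^{(n)}$, we also have $V_\lambda\models\varphi(M,a)$. By $E_{n+1}(\kappa)$ there are $\theta\in C^{(n+1)}$ with $\theta>\lambda$ and an elementary $j:V_\lambda\to V_\theta$ with $\crit(j)=\kappa$ and $j(\kappa)>\lambda$. As $a,L\in V_\kappa$ and $j$ fixes $V_\kappa$ pointwise (rank induction below the critical point), we get $j(a)=a$, $j(L)=L$, and $j$ fixes each symbol of $L$. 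Elementarity gives $V_\theta\models\varphi(j(M),a)$, and since $\theta\in C^{(n)}$, $j(M)\in\mathcal A$. Moreover $\rank(j(M))=j(\rank(M))\ge j(\kappa)>\lambda>\rank(M)$, so $j(M)\ne M$.

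Second, I check that $j\restr|M|:M\to j(M)$ is $L$-elementary. The relation $\operatorname{Sat}(M)$, or the satisfaction relation for the structure $M$ in language $L$, lies in $V_\lambda$ because $\lambda$ is a limit and $\rank(M)<\lambda$. Then $j(\operatorname{Sat}(M))=\operatorname{Sat}(j(M))$ as in Lemma~\ref{lem:restriction-elementarity}, and fixing $L$ pointwise gives $M\models\psi(\bar m)\iff j(M)\models\psi(j(\bar m))$. The restriction $j\restr|M|$ is a set, hence a legitimate witness, so $\VP$ holds. Note that the coding convention puts $|M|\subseteq\tc(M)$, and $j$ maps $|M|$ into $|j(M)|$ because the universe is definable from the code.

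The main obstacle is the correctness bookkeeping. A $\Pi_n$ or $\Sigma_n$ statement must transfer both down to $V_\lambda$ and up from $V_\theta$, which requires only $\lambda,\theta\in C^{(n)}$. The step that could need care is ensuring $C^{(n)}$ is unbounded in $\ZF$, and that membership of $M$ in $V_\lambda$ with the rank bounds is arranged before invoking $E_{n+1}$. Both follow from the reflection theorem, which holds in $\ZF$ without choice, so I expect no genuine difficulty. One could even use $E_n$ directly; $n+1$ merely leaves slack.
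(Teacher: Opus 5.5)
Your proposal is correct and is essentially the paper's proof. You take an extendibility cardinal $\kappa$ above $a$ and $L$ and a member $M\in\mathcal A$ of rank at least $\kappa$ inside a correct rank $V_\lambda$; you transfer $\varphi$ through $j:V_\lambda\to V_\theta$ using $\lambda,\theta\in C^{(n)}$, obtain elementarity of $j\restr|M|$ from $j(\operatorname{Sat}(M))=\operatorname{Sat}(j(M))$, and separate $M$ from $j(M)$ by $\rank(j(M))\geq j(\kappa)>\lambda$, with only cosmetic differences (using $E_{n+1}$ for slack, allowing $\rank(M)=\kappa$).
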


\begin{proof}
 Let $\mathcal A=\{M:\varphi(M,a)\}$ be a proper class of structures in a set-sized language $L$, and put $\rho=\rank(\tc(\{a,L\}))$.  Choose $n$ large enough that $C^{(n)}$-correct ranks are correct about $\varphi$, the coding of $L$-structures, and satisfaction for set-sized $L$-structures.  Choose a $C^{(n)}$-extendible cardinal $\kappa>\rho$ and $M\in\mathcal A$ with $\alpha=\rank(M)>\kappa$.  Let $\lambda\in C^{(n)}$ lie above $\alpha+\omega$, and take $j:V_\lambda\longrightarrow V_\theta$ witnessing $C^{(n)}$-extendibility of $\kappa$, with $\theta\in C^{(n)}$.  The parameters $a$ and $L$ are fixed.  Correctness of the source and target ranks, followed by elementarity, gives $j(M)\in\mathcal A$, and $j\restr|M|:M\to j(M)$ is $L$-elementary.  Finally,
\[
\rank(j(M))=j(\alpha)\geq j(\kappa)>\lambda>\alpha,
\]
so the two structures are distinct.
\end{proof}

We recall that a cardinal $\delta$ is rank-Berkeley if, for every $\zeta<\delta$ and every $\Theta>\delta$, there is a nontrivial elementary self-embedding $j:V_\Theta\to V_\Theta$ with $\zeta<\crit(j)<\delta$ and $j(\delta)=\delta$ \cite[Definition~4.4]{Mohammd}.

\begin{lemma}\label{lem:rank-Berkeley-descent}
$\ZF$ proves that the existence of a rank-Berkeley cardinal implies the existence of a set model of
\[
\ZF+\UE+\VP.
\]
\end{lemma}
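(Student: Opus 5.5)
Fix a rank-Berkeley cardinal $\delta$. The plan is to find a rank $V_\kappa$ with $\kappa<\delta$ that is $\Sigma_n$-correct inside a large $V_\Theta$ for every $n$ simultaneously, and to check that $V_\kappa\models\ZF+\UE$. Lemma~\ref{lem:OH-VP}, applied inside $V_\kappa$, then gives $V_\kappa\models\VP$. The natural candidate is the critical point of a suitable self-embedding. Choose a limit ordinal $\Theta>\delta$ such that $V_\Theta\models\ZF$; Lemma~\ref{lem:ZF-ranks} is unavailable since we do not yet have $\VP$, so instead use the rank-Berkeley embeddings with large $\Theta$ together with the argument of Lemma~\ref{lem:ZF-ranks}. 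Given $j:V_{\Theta}\to V_{\Theta}$ with $\kappa=\crit(j)<\delta$ and $j(\delta)=\delta$, no map from a member of $V_\kappa$ is cofinal in $\kappa$, so $V_\kappa\models\ZF$ by Lemma~\ref{lem:ZF-rank-criterion}.

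For $\UE$, I would show that in $V_\kappa$, for each $n$, the $E_n$-cardinals are unbounded. Work with the critical sequence $\kappa_0=\kappa$, $\kappa_{i+1}=j(\kappa_i)$. The standard argument (Goldberg/Schlutzenberg style) shows that a self-embedding of $V_\Theta$ makes its critical point extendible in $V_{\lambda}$ for $\lambda$ up to the supremum of the critical sequence. Concretely, restricting $j$ to $V_\lambda$ for $\kappa<\lambda<\kappa_1$ gives $j\restr V_\lambda:V_\lambda\to V_{j(\lambda)}$ with critical point $\kappa$ and $j(\kappa)>\lambda$ as long as $\lambda<\kappa_1$. To get this for all $\lambda$ and to obtain $C^{(n)}$-correctness of source and target, I would relativize everything to a correct rank. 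Let $C$ be the club of $\eta<\Theta$ with $V_\eta\prec V_\Theta$; it is unbounded below $\delta$ by reflection, using $\Theta$ chosen in some $C^{(m)}$ of $V$. Then $j$ maps $C$ into itself. Below $\kappa$, elementarity reflects the statement ``$\kappa$ is $E_n$ as computed in $V_{\kappa_1}$'' down to unboundedly many $\bar\kappa<\kappa$ that are $E_n$ in $V_\kappa$. The reflection runs as follows. Inside $V_\Theta$, the sentence ``there is an $E_n$-cardinal of $V_\eta$ above $\zeta$'', where $\eta=\kappa_1$ is correct, holds for $\zeta<\kappa$ witnessed by $\kappa$ itself. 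Pulling back through $j$ gives a witness below $\kappa$ for $V_\kappa$, since $j(\kappa)=\kappa_1$ and $j(\zeta)=\zeta$.

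The main obstacle is verifying that $\kappa$ really is $E_n$ in $V_{\kappa_1}$ for every $n$ at once. For each $\lambda\in C^{(n)}$ of $V_{\kappa_1}$ above $\kappa$, we need $j\restr V_\lambda$ to land in $V_{j(\lambda)}$ with $j(\lambda)\in C^{(n)}$ of $V_{\kappa_1}$, and with $j(\lambda)<\kappa_1$. This fails when $\lambda$ is large, because $j(\lambda)$ exceeds $\kappa_1$. I would fix this using the full freedom of rank-Berkeleyness. Pick the embedding with $\crit(j)$ above an arbitrary $\zeta$, and vary it so that for each $\lambda<\delta$ some embedding $k$ with $\crit(k)=\kappa'$ has $k(\kappa')>\lambda$. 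One then takes $\kappa$ to be $\delta$-indexed: show that in $V_\delta$ the $E_n$-cardinals are unbounded for every $n$, with $\delta$ correct via $j(\delta)=\delta$. Finally, obtain a set model as $V_\delta$, or as some $V_\eta$ with $\eta\in C$ below $\delta$ if $V_\delta$ itself fails Replacement. The correctness transfer of $C^{(n)}$ between $V_\lambda$, $V_\Theta$ and $V_\delta$ is the delicate bookkeeping step.
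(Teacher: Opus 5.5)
Your proposal has a genuine gap at the step you yourself flag as the main obstacle. You cite the right fact, that a self-embedding makes its critical point extendible up to the supremum of its critical sequence, but you never prove it. Your concrete argument with $j\restr V_\lambda$ only covers $\lambda<\kappa_1$, and your repair does not work. $E_n(\kappa)$ needs a \emph{single} critical point $\kappa$ that is sent above every prescribed $\lambda$. Taking a different rank-Berkeley embedding for each $\lambda$ changes the critical point, so it does not show that any fixed cardinal is $E_n$ in any model. Likewise, $j(\delta)=\delta$ gives no correctness for $V_\delta$, and you give no argument that $V_\delta$ satisfies Replacement. The set $C=\{\eta<\Theta:V_\eta\prec V_\Theta\}$ is also unsupported. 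The reflection theorem, even with $\Theta\in C^{(m)}$, only yields ranks that are $\Sigma_n$-correct for each fixed $n$, not ranks fully elementary in $V_\Theta$. Finally, a $\Theta>\delta$ with $V_\Theta\models\ZF$ does not come out of the argument of Lemma~\ref{lem:ZF-ranks}, which only produces ranks at critical points below $\delta$; such a $\Theta$ is not needed anyway, since any $\Theta>\delta+\omega$ suffices. So as written, your reflection step has nothing to reflect.

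The missing idea is to iterate one embedding instead of varying it.
\begin{itemize}
\item Put $\kappa_{m+1}=j(\kappa_m)$ and $\lambda=\sup_m\kappa_m\leq\delta$. Then $j(\lambda)=\lambda$.
\item Since $j\restr V_{\kappa_0}$ is the identity, Lemma~\ref{lem:restriction-elementarity} gives $V_{\kappa_0}\prec V_{\kappa_1}$. Applying $j^m$ gives $V_{\kappa_m}\prec V_{\kappa_{m+1}}$, and the elementary-chain theorem yields $V_{\kappa_m}\prec V_\lambda\models\ZF$.
\item This chain makes $C^{(n)}$ computable inside $V_\lambda$: $\mu\in(C^{(n)})^{V_\lambda}$ if and only if $V_\mu\prec_{\Sigma_n}V_{\kappa_r}$ for some $r$ with $\mu<\kappa_r$.
\item For $i\leq r<s<t$, the maps $k=j^i(j^t\restr V_{\kappa_s})$ lie in $V_\lambda$ and all have critical point $\kappa_i$. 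Each sends $\kappa_i$ to $\kappa_{i+t}$, which is as large as required, and sends $\kappa_r$ to $\kappa_{r+t}$, so it preserves internal $C^{(n)}$.
\item Restricting $k$ to $V_\mu$ witnesses $E_n(\kappa_i)$ in $V_\lambda$ for every $n$ at once. Since the $\kappa_i$ are cofinal in $\lambda$, this gives $V_\lambda\models\UE$.
\end{itemize}
With this in hand your own plan can be salvaged. $V_{\kappa_1}\prec V_\lambda$ shows that $\kappa$ is $E_n$ in $V_{\kappa_1}$, and $V_{\kappa_0}\prec V_\lambda$ gives $V_{\kappa_0}\models\UE$ directly. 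Without the chain and the iterates, however, there is no argument that any $E_n$-cardinal exists.
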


\begin{proof}
 Let $\delta$ be rank-Berkeley.  Choose $\Theta>\delta+\omega$ and a nontrivial elementary embedding $j:V_\Theta\longrightarrow V_\Theta$ with critical point $\kappa_0<\delta$ and $j(\delta)=\delta$.  Put $\kappa_{m+1}=j(\kappa_m)$ and $\lambda=\sup_{m<\omega}\kappa_m$.  Inductively, $\kappa_m<\delta$ for all $m$, so $\lambda\leq\delta<\Theta$.  The sequence $s=\langle\kappa_m:m<\omega\rangle$ belongs to $V_\Theta$, and $j(s)$ is its tail.  Since $\lambda=\sup\operatorname{ran}(s)$ is a first-order relation between objects in the domain,
\[
j(\lambda)=\sup\operatorname{ran}(j(s))=\sup_{m<\omega}\kappa_{m+1}=\lambda.
\]
Hence $j\restr V_\lambda:V_\lambda\to V_\lambda$ is elementary.

If $f:a\to\kappa_0$ were cofinal for some $a\in V_{\kappa_0}$, then $j(f)$ would be cofinal in $\kappa_1$ although its range remained below $\kappa_0$.  Lemma~\ref{lem:ZF-rank-criterion} therefore gives $V_{\kappa_0}\models\ZF$.  Since $j\restr V_{\kappa_0}$ is the identity, Lemma~\ref{lem:restriction-elementarity} gives $V_{\kappa_0}\prec V_{\kappa_1}$.  Applying $j^m$ to this assertion gives
\[
V_{\kappa_m}\prec V_{\kappa_{m+1}}\qquad(m<\omega).
\]
The elementary-chain theorem yields $V_{\kappa_m}\prec V_\lambda$ for every $m$, and $V_\lambda\models\ZF$.

Fix $n\geq1$ and $i<\omega$.  We show inside $V_\lambda$ that $\kappa_i$ is $C^{(n)}$-extendible.  For $\mu<\lambda$, internal membership in $C^{(n)}$ is characterized by
\[
\mu\in(C^{(n)})^{V_\lambda}
\quad\text{if and only if}\quad
V_\mu\prec_{\Sigma_n}V_{\kappa_r}
\text{ for some }r\text{ with }\mu<\kappa_r,
\]
 because every $V_{\kappa_r}$ is elementary in $V_\lambda$.  Let $\mu>\kappa_i$ belong to $(C^{(n)})^{V_\lambda}$, and choose $r\geq i$ with $\mu+\omega<\kappa_r$ and $V_\mu\prec_{\Sigma_n}V_{\kappa_r}$.  Choose $s>r$ and $t>s$.  The set-sized restriction $e=j^t\restr V_{\kappa_s}:V_{\kappa_s}\longrightarrow V_{\kappa_{s+t}}$ is elementary by Lemma~\ref{lem:restriction-elementarity}, has rank below $\kappa_{s+t}+\omega$, and therefore belongs to $V_\lambda$.  Put $k=j^i(e)$, where this denotes the image of the set $e$ under $j^i$.  Then $k\in V_\lambda$, $k:V_{\kappa_{s+i}}\longrightarrow V_{\kappa_{s+t+i}}$, $\crit(k)=\kappa_i$, and $k(\kappa_i)=\kappa_{t+i}>\mu$.
Since $r\geq i$, we have $\kappa_r=j^i(\kappa_{r-i})$, and hence
\[
k(\kappa_r)=j^i\bigl(e(\kappa_{r-i})\bigr)=\kappa_{r+t}.
\]
Applying $k$ to $V_\mu\prec_{\Sigma_n}V_{\kappa_r}$ gives
\[
V_{k(\mu)}\prec_{\Sigma_n}V_{\kappa_{r+t}}.
\]
As $V_{\kappa_{r+t}}\prec V_\lambda$, the displayed characterization shows that $k(\mu)\in(C^{(n)})^{V_\lambda}$.  Because $k$ has critical point $\kappa_i$, $V_\lambda$ regards $\kappa_i$ as a cardinal.  Indeed, a surjection $f:\xi\to\kappa_i$ with $\xi<\kappa_i$ would be carried to a surjection from $\xi$ onto $k(\kappa_i)$, although $k(f)(\zeta)=f(\zeta)<\kappa_i$ for every $\zeta<\xi$.  The map $k\restr V_\mu\in V_\lambda$ is therefore the embedding required by $E_n(\kappa_i)$ for the chosen $\mu$.  Thus every $\kappa_i$ is $C^{(n)}$-extendible in $V_\lambda$.  Since the critical sequence is cofinal in $\lambda$ and $n$ was arbitrary, $V_\lambda\models\UE$.  Lemma~\ref{lem:OH-VP} now gives $V_\lambda\models\VP$.
\end{proof}

\begin{lemma}\label{lem:Cn-downward}
If $m>n\geq1$, then every $C^{(m)}$-extendible cardinal is $C^{(n)}$-extendible.
\end{lemma}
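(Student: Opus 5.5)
The plan is to work directly from the definition of $E_m(\kappa)$ and convert an arbitrary $C^{(n)}$ target into a $C^{(m)}$ one. Fix a $C^{(m)}$-extendible $\kappa$ and $\lambda>\kappa$ with $\lambda\in C^{(n)}$. That $\kappa$ is a cardinal is already part of $E_m(\kappa)$. Since $C^{(m)}$ is unbounded, choose $\lambda'\in C^{(m)}$ with $\lambda'>\lambda$; in particular $\lambda'\in C^{(n)}$. Apply $E_m(\kappa)$ at $\lambda'$ to obtain $\theta'>\lambda'$ in $C^{(m)}$ and an elementary $j:V_{\lambda'}\to V_{\theta'}$ with $\crit(j)=\kappa$ and $j(\kappa)>\lambda'$. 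The candidate witness for $E_n(\kappa)$ at $\lambda$ is $j\restr V_\lambda:V_\lambda\to V_{j(\lambda)}$, with target $\theta=j(\lambda)$.

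Next I would check the routine properties. Because $\lambda<\lambda'$, Lemma~\ref{lem:restriction-elementarity} applies: $V_\lambda\in V_{\lambda'}$ and $\operatorname{Sat}(V_\lambda)\in V_{\lambda'}$. The second needs $\lambda+\omega\le\lambda'$, which holds because $\lambda'\in C^{(1)}$ is a limit of strong limits, or at least a limit ordinal above $\lambda$ closed under $\xi\mapsto\xi+\omega$. Hence $j\restr V_\lambda:V_\lambda\to V_{j(\lambda)}$ is elementary, using $j(V_\lambda)=V_{j(\lambda)}$, which holds since $V_{\theta'}$ computes rank initial segments correctly. The critical point is still $\kappa$, since $\kappa<\lambda$. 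We also get $j(\kappa)>\lambda'>\lambda$, and $j(\lambda)\ge j(\kappa)>\lambda$.

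The main step is to show $j(\lambda)\in C^{(n)}$. The statement ``$V_\lambda\prec_{\Sigma_n}V$'' is $\Pi_n$; for $n\ge1$ it is $\Pi_n$ as Bagaria notes, and the same computation works in $\ZF$ because $x=V_\alpha$ is $\Pi_1$. Since $\lambda'\in C^{(m)}$ with $m>n$, we have $V_{\lambda'}\prec_{\Sigma_{n+1}}V$, so $V_{\lambda'}\models$ ``$\lambda\in C^{(n)}$''. Elementarity then gives $V_{\theta'}\models$ ``$j(\lambda)\in C^{(n)}$''. Since $\theta'\in C^{(m)}\subseteq C^{(n+1)}$, this $\Pi_n$ statement is absolute upward to $V$, and so $j(\lambda)\in C^{(n)}$.

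I expect the main obstacle to be checking, in $\ZF$ alone, that membership in $C^{(n)}$ is $\Pi_n$. The concern is whether the absoluteness between $V_{\lambda'}$, $V_{\theta'}$ and $V$ for this formula needs only $\Sigma_{n+1}$-correctness, which $m\ge n+1$ supplies. If the $\Pi_n$ complexity is delicate, I would avoid the formula altogether and argue directly. Both $V_{\lambda'}$ and $V_{\theta'}$ are $\Sigma_n$-correct, so $V_\lambda\prec_{\Sigma_n}V_{\lambda'}$. Elementarity of $j$ then transfers this to $V_{j(\lambda)}\prec_{\Sigma_n}V_{\theta'}$, using that $\operatorname{Sat}$ restricted to $\Sigma_n$ formulas is expressible inside $V_{\lambda'}$. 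Composing with $V_{\theta'}\prec_{\Sigma_n}V$ finishes the argument.
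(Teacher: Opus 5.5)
Your proof is correct and follows essentially the same route as the paper: pick a rank in $C^{(m)}$ above $\lambda$, apply extendibility there, transfer ``$\lambda\in C^{(n)}$'' through $j$ using the correctness of the source and target ranks, and restrict to $V_\lambda$. The only differences are that the paper first reduces to $m=n+1$, and that in $\ZF$ the computation you sketch gives only a $\Delta_2$ definition of $C^{(1)}$, not a $\Pi_1$ one, as the paper notes; this is harmless because you use $\Sigma_{n+1}$-correctness of $V_{\lambda'}$ and $V_{\theta'}$, and your direct fallback avoids the complexity computation altogether.
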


\begin{proof}
 It is enough to consider $m=n+1$.  Let $\kappa$ be $C^{(n+1)}$-extendible, and let $\lambda>\kappa$ belong to $C^{(n)}$.  Choose $\mu>\lambda$ in $C^{(n+1)}$ and an elementary embedding $j:V_\mu\longrightarrow V_\nu$ with $\crit(j)=\kappa$, $j(\kappa)>\mu$, and $\nu\in C^{(n+1)}$.  Because $\mu\in C^{(n+1)}$, we have $V_\mu\models\lambda\in C^{(n)}$.  Elementarity therefore gives
\[
V_\nu\models j(\lambda)\in C^{(n)}.
\]
 Since $\nu\in C^{(n+1)}$, $j(\lambda)\in C^{(n)}$ also holds in $V$.  For $n=1$, this follows from the $\Delta_2$ definition of $C^{(1)}$, while for $n\geq2$ it follows from the usual $\Pi_n$ definition of $C^{(n)}$.  Thus $j(\lambda)\in C^{(n)}$, and $j\restr V_\lambda:V_\lambda\longrightarrow V_{j(\lambda)}$ witnesses the required $C^{(n)}$-extendibility of $\kappa$.
\end{proof}

\begin{proposition}\label{prop:OH-pruning}
\[
\Con(\ZF+\VP)\quad\text{implies}\quad
\Con\bigl(\ZF+\VP+\UE+``\text{there are no rank-Berkeley cardinals}''\bigr).
\]
\end{proposition}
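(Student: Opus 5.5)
Start from a model $V\models\ZF+\VP$ and split into two cases, according to whether $V$ has a rank-Berkeley cardinal.

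\emph{Case 1: there is no rank-Berkeley cardinal in $V$.} It suffices to show that $\ZF+\VP$ proves $\UE$; then $V$ itself witnesses the conclusion. To get $\UE$, fix $n\geq1$ and an ordinal $\gamma$. We argue by contradiction: suppose no $C^{(n)}$-extendible cardinal lies above $\gamma$. Then for every cardinal $\kappa>\gamma$ there is a least $\lambda_\kappa\in C^{(n)}$ witnessing the failure of $E_n(\kappa)$. Consider the structures
\[
\langle V_{\lambda'},\in,\kappa,\lambda_\kappa,(z)_{z\in V_{\gamma+1}}\rangle,
\]
where $\lambda'$ is chosen in $C^{(n+1)}$ and large enough. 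These form a class definable by a formula of bounded complexity, so $\VP(\Pi_{n+2})$ suffices. $\VP$ yields an elementary embedding between two of them that fixes $V_{\gamma+1}$. From it we extract a critical point $\kappa^*>\gamma$, and, using the standard Bagaria argument, the least-counterexample trick gives that $\kappa^*$ is in fact $C^{(n)}$-extendible, a contradiction. The structures carry no choice-dependent data, so this should go through in $\ZF$.

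Three points in this argument need care.
\begin{itemize}
\item In $\ZF$, the critical point of the $\VP$-embedding must be shown to be a cardinal. This follows as in the proof of Lemma~\ref{lem:rank-Berkeley-descent}.
\item The requirement $j(\kappa)>\lambda$ has to be extracted. Here I would follow the choiceless treatment of \cite{Mohammd}, which derives unbounded choiceless extendibility from $\VP$. If that reference already gives $\UE$ from $\VP$ in $\ZF$, I cite it directly.
\item Since $\UE$ is a scheme, this step only uses each instance of $\VP$. This is fine because the case hypothesis is a single sentence.
\end{itemize}

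\emph{Case 2: there is a rank-Berkeley cardinal in $V$.} Let $\delta$ be the least one. Apply Lemma~\ref{lem:rank-Berkeley-descent}, choosing the embedding $j:V_\Theta\to V_\Theta$ with critical point $\kappa_0<\delta$ and $j(\delta)=\delta$, and pass to $V_\lambda$ with $\lambda=\sup_m\kappa_m\leq\delta$. The lemma already gives $V_\lambda\models\ZF+\UE+\VP$, so it remains to check that $V_\lambda$ has no rank-Berkeley cardinal. Suppose $\delta'<\lambda$ were rank-Berkeley in $V_\lambda$. Since $V_{\kappa_m}\prec V_\lambda$ for every $m$, we may pick $m$ with $\delta'<\kappa_m$, and then $V_{\kappa_m}$ satisfies that there is a rank-Berkeley cardinal. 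Pulling back along $j^m$, which is elementary on $V_{\kappa_0}\prec V_{\kappa_m}$, we get $V_{\kappa_0}\models$ ``there is a rank-Berkeley cardinal $\delta''<\kappa_0$''.

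I expect the main obstacle here: internal rank-Berkeleyness in $V_{\kappa_0}$ must transfer to true rank-Berkeleyness in $V$, in order to contradict the minimality of $\delta$. The definition quantifies over all $\Theta$, and $V_{\kappa_0}$ only sees $\Theta<\kappa_0$. My first attempt is to avoid this by choosing $\kappa_0$ with more correctness, namely $\kappa_0$ above $\delta$'s reflection points, or by demanding in the definition of rank-Berkeley that $\Theta$ range over $C^{(1)}$-style correct ranks, so that it reflects. If that fails, I would instead take the least $\lambda$ (over all such descents) and argue that a rank-Berkeley cardinal of $V_\lambda$ would yield a shorter critical sequence inside $V_\lambda$, contradicting minimality. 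This is again routed through Lemma~\ref{lem:rank-Berkeley-descent} applied inside $V_\lambda$.
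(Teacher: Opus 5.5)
Your Case~2 has a genuine gap, and it is the one you flag yourself. Rank-Berkeleyness inside $V_\lambda$ (or $V_{\kappa_0}$) only quantifies over $\Theta<\lambda$. It therefore says nothing about self-embeddings of $V_\Theta$ for $\Theta\geq\lambda$, and the minimality of $\delta$ in $V$ yields no contradiction.

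Your first two remedies do not close this. Extra correctness of $\kappa_0$ still leaves the ranks $\Theta\geq\lambda$ invisible, and you are not free to change the definition of rank-Berkeley. Your third remedy works only if you minimize a property that is absolute between $V_\lambda$ and $V$ and that Lemma~\ref{lem:rank-Berkeley-descent} reproduces inside $V_\lambda$. Minimizing over descents from rank-Berkeley cardinals \emph{of $V$} fails for the same reason as before.

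A version that works is this. Let $\lambda$ be least such that $V_\lambda\models\ZF+\UE+\VP$; this is a single formula in $\lambda$, via the satisfaction relation of the set $V_\lambda$ and the recursive set of axiom codes. Suppose $V_\lambda$ had a rank-Berkeley cardinal $\delta'$. The proof of Lemma~\ref{lem:rank-Berkeley-descent} is a $\ZF$ proof, so run inside $V_\lambda$ it gives $\lambda'\leq\delta'<\lambda$ with $V_{\lambda'}\models\ZF+\UE+\VP$. Since $V_{\lambda'}$ and its satisfaction relation are computed correctly in $V_\lambda$, this holds in $V$ and contradicts minimality.

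In Case~1 you should not aim at $\ZF+\VP\vdash\UE$. No such theorem is available, and your Bagaria-style sketch breaks exactly at securing $j(\kappa)>\lambda$ without Choice; that is the step where the case hypothesis must be used. What Mohammd supplies is a dichotomy, which the paper cites: for $n\geq2$, $\VP(\Pi_{n+1})$ together with boundedly many $C^{(n)}$-extendible cardinals yields unboundedly many rank-Berkeley cardinals. The case $n=1$ then follows from Lemma~\ref{lem:Cn-downward}. With no rank-Berkeley cardinals, this gives $\UE$.

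The paper never needs your Case~2. The proof of Lemma~\ref{lem:rank-Berkeley-descent} formalizes in $T=\ZF+\VP$, so $T\vdash\mathsf{RB}\to\Con(T)$. By G\"odel's second incompleteness theorem, $T+\neg\mathsf{RB}$ is therefore consistent. The dichotomy then gives $\UE$ in any model of $T+\neg\mathsf{RB}$. The least-$\lambda$ argument above is essentially a semantic counterpart of that incompleteness step.
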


\begin{proof}
Put $T=\ZF+\VP$.  This is a recursively axiomatized theory: the instances of the parameterized Vop\v{e}nka scheme are generated effectively from formula codes.  The assertion $\mathsf{RB}$ that a rank-Berkeley cardinal exists is a single first-order sentence, since elementarity between set ranks is expressed by their canonical satisfaction relations.  The proof of Lemma~\ref{lem:rank-Berkeley-descent} can be formalized in $T$, uniformly in formula codes.  Hence $T$ proves $\mathsf{RB}\longrightarrow\Con(T)$.
If $T+\neg\mathsf{RB}$ were inconsistent, the deduction theorem would give $T\vdash\mathsf{RB}$ and hence $T\vdash\Con(T)$, contrary to G\"odel's second incompleteness theorem when $T$ is consistent.  Thus $T+\neg\mathsf{RB}$ is consistent.

{Now work in a model of $T+\neg\mathsf{RB}$.  Fix $n\geq2$.  {The scheme $\VP$ includes $\VP(\Pi_{n+1})$.}  Mohammd proves that if $\VP(\Pi_{n+1})$ holds while the $C^{(n)}$-extendible cardinals are bounded, then rank-Berkeley cardinals are unbounded \cite[Proposition~6.5 and Theorem~6.7]{Mohammd}.}  Thus the $C^{(n)}$-extendible cardinals are unbounded.  For $n=1$, boundedness of the $C^{(1)}$-extendibles implies boundedness of the $C^{(2)}$-extendibles by Lemma~\ref{lem:Cn-downward}; the $n=2$ case therefore gives unboundedly many $C^{(1)}$-extendibles as well.  Hence $\UE$ holds.
\end{proof}

\subsection*{{Local restoration of Choice}}

\begingroup

The next definitions record finite bounds on the L\'evy complexity needed for reflection.  Fix a G\"odel numbering of formulas, a primitive-recursive prenex-normal-form operation, and a primitive-recursive function $\ell(e)$ bounding the L\'evy complexity of the formula coded by $e$ and of its negation.  Let $\mathcal D$ be a finite list of formulas defining additional predicates, and write $e^{\mathcal D}$ for the result of eliminating those predicates.  Let $\Sigma$ be a finite set of codes for formulas whose free variables occur among the fixed ordered tuple
\[
(\eta,\delta,\theta,\rho,\alpha,x).
\]
Let $\Sigma^{\pm}$ be the closure of $\Sigma$ under subformulas and negation.  Its associated complexity bound is
\[
c_{\mathcal D}(\Sigma)=2+\max\Bigl(\{0\}\cup
\{\ell(e^{\mathcal D}):e\in\Sigma^{\pm}\}\Bigr).
\]
Thus two ranks in $C^{(c_{\mathcal D}(\Sigma))}$ agree on all formulas in $\Sigma$ after interpreting the predicates defined by $\mathcal D$.

Let $\chi_{\mathcal D,\Sigma}(\eta,\delta,\theta,\rho,\alpha,x)$ be the first-order formula
\[
\exists\bar\delta\,\exists\bar\theta\,\exists\bar\rho\,
\exists\bar\alpha\,\exists\bar x\,\exists J\;\Phi_{\mathcal D,\Sigma},
\]
where $\Phi_{\mathcal D,\Sigma}$ is the conjunction of
\[
\eta<\bar\delta<\bar\theta<\bar\theta+\omega<\bar\rho<
\delta<\theta<\theta+\omega<\rho,\qquad
\bar\alpha<\bar\theta,\qquad \bar x\in V_{\bar\rho},
\]
the assertion that $J:V_{\bar\rho}\to V_\rho$ is elementary with respect to the canonical satisfaction relations, the equations
\[
\crit(J)=\bar\delta,\quad J(\bar\delta)=\delta,\quad
J(\bar\theta)=\theta,\quad J(\bar\alpha)=\alpha,\quad J(\bar x)=x,
\]
the assertion $\bar\rho\in C^{(c_{\mathcal D}(\Sigma))}$, and the literal finite conjunction
\[
\bigwedge_{e\in\Sigma}
e^{\mathcal D}
[\eta,\bar\delta,\bar\theta,\bar\rho,\bar\alpha,\bar x],
\]
where the brackets denote primitive-recursive, capture-avoiding simultaneous substitution for the fixed tuple of free variables.  Predicate elimination, simultaneous substitution, finite conjunction, rank membership, and set-coded satisfaction are effective operations.  Hence the code of $\chi_{\mathcal D,\Sigma}$ is primitive recursive in $\mathcal D$ and $\Sigma$.  We use the following second complexity bound:
\[
s_{\mathcal D}(\Sigma)=2+\max\bigl\{c_{\mathcal D}(\Sigma),
\ell(\ulcorner\chi_{\mathcal D,\Sigma}\urcorner)\bigr\}.
\]
\endgroup

We write $V_\gamma\prec_{\Sigma_1^*}V$ for Spoerl's strengthened form of $\Sigma_1$-elementarity \cite[Definition~20]{Spoerl}.  We use Woodin's choiceless definition of supercompactness: a cardinal $\delta$ is supercompact if, whenever $\gamma>\delta$, $V_\gamma\prec_{\Sigma_1^*}V$, and $a\in V_\gamma$, there are $\bar\gamma<\delta$, $\bar\delta$, $\bar a\in V_{\bar\gamma}$, and an elementary embedding $j:V_{\bar\gamma+1}\to V_{\gamma+1}$ such that $\crit(j)=\bar\delta$, $j(\bar\delta)=\delta$, $j(\bar a)=a$, and $V_{\bar\gamma}\prec_{\Sigma_1^*}V$ \cite[Definition~21]{Spoerl}.

We use Woodin's local forcing for restoring Choice.  The following formulation combines his construction with the lifting lemma.  Spoerl gives a detailed exposition {\cite[Definition~43, Theorem~44(v), and Lemma~45]{Spoerl}}, and Woodin's original construction appears in \cite[pp.~323--324]{Woodin}.  In Spoerl's presentation, the construction assigns a cardinal $\kappa_\beta$ to each stage $\beta$.  We call the cardinal through which the iteration has been defined its \emph{endpoint}.  A stage $\gamma$ is \emph{marked} if $\kappa_\gamma=\gamma$.

For an ordinal $\xi$, let $\DC_\xi$ denote dependent choice for sequences of length $\xi$.

\begin{theorem}[Woodin]\label{thm:Woodin-forcing}

Assume that $\AC$ fails, let $\mu$ be least such that $\DC_\mu$ fails, and let $\Lambda>\mu$ be supercompact in Woodin's choiceless sense.  There is a homogeneous forcing $\mathbb Q_\Lambda$, uniformly $\Sigma_3$-definable over $V_\Lambda$, such that whenever $G\subseteq\mathbb Q_\Lambda$ is generic:

\begin{enumerate}
\item $(V[G])_\Lambda\models\ZFC$;
\item {every marked $\gamma\leq\Lambda$ satisfies
\[
V_\gamma[G\cap\mathbb Q_\gamma]=(V[G])_\gamma;
\]
$\mathbb Q_\Lambda$ factors over $\mathbb Q_\gamma$ with the canonical projection, and every supercompact $\gamma\leq\Lambda$ is marked;}
\item suppose $J:V_{\bar\gamma+\omega}\to V_{\gamma+\omega}$ is elementary, $\crit(J)=\bar\delta$, $J(\bar\delta)=\delta>\bar\gamma$, $\gamma$ is marked, and $J(\mathbb Q_{\bar\gamma})=\mathbb Q_\gamma$.  Then $\bar\gamma$ is marked and $J$ lifts in $V[G]$ to $J^*:(V[G])_{\bar\gamma+1}\longrightarrow(V[G])_{\gamma+1}$.
\end{enumerate}
\end{theorem}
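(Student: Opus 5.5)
This theorem is attributed to Woodin and the paper cites Spoerl for the details, so my plan is to assemble it from that construction rather than rebuild it from nothing.

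The forcing $\mathbb Q_\Lambda$ will be taken to be Woodin's iteration as presented in \cite[Definition~43]{Spoerl}. It is a reverse-Easton-style iteration of length $\Lambda$ that collapses initial segments so as to add well-orders. At each stage $\beta$ it forces with a homogeneous collapse onto the cardinal $\kappa_\beta$. The iteration starts above $\mu$, which is why the hypothesis $\Lambda>\mu$ and the failure of $\DC_\mu$ enter: they fix the starting point and the coordinates below $\mu$ that the collapses must handle. Homogeneity follows stage by stage from the homogeneity of each collapse, passed through the iteration by the standard argument for automorphisms of iterations. The uniform $\Sigma_3$ definability over $V_\Lambda$ comes from reading off the recursion. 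The recursion quantifies over ranks and over supercompactness-type reflection, which Woodin's definition expresses using $\Sigma_1^*$-correct ranks, and computing the complexity of that reflection gives the $\Sigma_3$ bound.

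Clause (1) is \cite[Theorem~44(v)]{Spoerl}. The argument runs as follows. Every set in $(V[G])_\Lambda$ appears at a bounded stage. Choiceless supercompactness of $\Lambda$ gives unboundedly many marked stages below $\Lambda$ at which $V_\gamma$ is collapsed to become well-orderable. Hence $(V[G])_\Lambda$ satisfies $\AC$, and it satisfies $\ZF$ because $\Lambda$ is a limit of such $\Sigma_1^*$-correct stages.

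Clause (2) is the factorization property. At a marked stage $\gamma$, the iteration up to $\gamma$ has size at most $\gamma$, and the tail is sufficiently closed, or at least adds no new elements of rank below $\gamma$. This gives $V_\gamma[G\cap\mathbb Q_\gamma]=(V[G])_\gamma$. The statement that supercompact $\gamma$ are marked is immediate: the recursion sets $\kappa_\gamma=\gamma$ whenever $\gamma$ reflects in Woodin's sense.

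Clause (3) is the lifting lemma \cite[Lemma~45]{Spoerl}, and I expect it to be the main obstacle. I would argue it in three steps.
\begin{enumerate}
\item Elementarity of $J$ together with $J(\mathbb Q_{\bar\gamma})=\mathbb Q_\gamma$ shows that $V_{\bar\gamma+\omega}$ computes $\kappa_{\bar\gamma}=\bar\gamma$, so $\bar\gamma$ is marked.
\item Since $\crit(J)=\bar\delta$ and $J(\bar\delta)=\delta>\bar\gamma$, the map $J$ is the identity on $\mathbb Q_{\bar\delta}$. The condition $\delta>\bar\gamma$ means $J(\mathbb Q_{\bar\gamma})=\mathbb Q_\gamma$ factors as $\mathbb Q_{\bar\gamma}*\dot{\mathbb R}$. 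So $J``(G\cap\mathbb Q_{\bar\gamma})$ is contained in $G\cap\mathbb Q_\gamma$, modulo the tail segment between $\bar\gamma$ and $\gamma$, which starts only at stage $\delta$.
\item Silver's criterion then gives a lift $J^*:V_{\bar\gamma+1}[G\cap\mathbb Q_{\bar\gamma}]\to V_{\gamma+1}[G\cap\mathbb Q_\gamma]$, defined by $J^*(\sigma^G)=J(\sigma)^G$ exactly as in Proposition~\ref{prop:symmetric-lift}. Clause (2), applied at the marked stages $\bar\gamma$ and $\gamma$, identifies these structures with $(V[G])_{\bar\gamma+1}$ and $(V[G])_{\gamma+1}$.
\end{enumerate}

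The delicate point is the rank-$+1$ level. Names for subsets of $(V[G])_{\bar\gamma}$ have rank about $\bar\gamma+$ a few, so $V_{\bar\gamma+\omega}$ is needed to contain them and to apply $J$. I also need to check that $G\cap\mathbb Q_\gamma$ meets the conditions $J(p)$ for $p\in G\cap \mathbb Q_{\bar\gamma}$. That check rests on the support structure of the iteration below $\bar\gamma$ being fixed pointwise above $\bar\delta$ in the appropriate sense, and this is where I would rely on Spoerl's detailed analysis.
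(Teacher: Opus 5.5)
Your overall plan matches the paper, which gives no proof of its own here. It takes $\mathbb Q_\Lambda$ to be Woodin's iteration in Spoerl's presentation and reads the three clauses off \cite[Definition~43, Theorem~44(v), Lemma~45]{Spoerl}. The argument you sketch for clause~(3), however, would fail.

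Since $\crit(J)=\bar\delta<\bar\gamma$, the map $J$ is the identity only on $\mathbb Q_{\bar\delta}$. A condition $p\in G\cap\mathbb Q_{\bar\gamma}$ with a nontrivial coordinate at a stage $\alpha\in[\bar\delta,\bar\gamma)$ is sent to $J(p)$, whose coordinate at stage $J(\alpha)\in[\delta,\gamma)$ is $J(p(\alpha))$. So beyond $\mathbb Q_{\bar\delta}$, the set $J``(G\cap\mathbb Q_{\bar\gamma})$ lives exactly on the segment from $\delta$ to $\gamma$ that you set aside ``modulo the tail''. Nothing makes the given $G$ contain it. The stage-$J(\alpha)$ generic is generic over a model containing $G\cap\mathbb Q_{\bar\gamma}$, so extending $p$ at stage $J(\alpha)$ incompatibly with $J(p)$ yields generics $G$ with $p\in G$ but $J(p)\notin G$.

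The theorem is claimed for every generic $G$, so Silver's criterion with the same $G$ and the recipe $\sigma^G\mapsto J(\sigma)^G$ are unavailable. Proposition~\ref{prop:symmetric-lift} could use that recipe only because its embedding fixes the forcing pointwise, which gives $J``G=G$. Your proposed repair, that the support structure is fixed pointwise above $\bar\delta$, is backwards, since $\bar\delta$ is the critical point.

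What is missing is a master-condition argument.
\begin{enumerate}
\item On $\mathbb Q_\delta=J(\mathbb Q_{\bar\delta})$, keep $G\cap\mathbb Q_\delta$. This is legitimate because $J(p)\restr\delta=p\restr\bar\delta$.
\item Because $\delta>\bar\gamma$, the $J$-image of the part of $G$ on stages $[\bar\delta,\bar\gamma)$ already lies in $V[G\cap\mathbb Q_\delta]$. There it is a small directed subset of the tail forcing from $\delta$ to $\gamma$, so closure of that tail gives it a lower bound $m$.
\item Replace the tail of $G$ by a generic $H\ni m$ that yields the same extension. For instance, take $H=\pi``(\text{tail of }G)$ for an automorphism $\pi\in V[G\cap\mathbb Q_\delta]$ supplied by homogeneity.
\item Define $J^*$ by evaluating $J(\sigma)$ at $(G\cap\mathbb Q_\delta)*H$.
\end{enumerate}
This is where the homogeneity in the statement does real work, and in $\ZF$ also the $\DC$-based closure above $\mu$; your sketch uses neither.

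Two smaller points also need repair.
\begin{itemize}
\item Clause~(2) identifies only the rank-$\gamma$ parts. Equating $V_{\bar\gamma+1}[G\cap\mathbb Q_{\bar\gamma}]$ with $(V[G])_{\bar\gamma+1}$ additionally needs that stage $\bar\gamma$ and later stages add no subsets of $(V[G])_{\bar\gamma}$; otherwise the lift must also pass through those stages.
\item In clause~(1), Replacement in $(V[G])_\Lambda$ does not follow from $\Lambda$ being a limit of $\Sigma_1^*$-correct stages. It comes from the strong regularity of $\Lambda$ behind Lemma~\ref{lem:supercompact-rank-ZF}, carried through the direct-limit structure of $\mathbb Q_\Lambda$.
\end{itemize}
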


Let $\mathcal D_{\mathrm W}$ be a fixed finite list of formulas defining Woodin's iteration, its prefixes, orders, restriction maps, and marked stages.

For each $\gamma$, let $\Def(V_\gamma)$ be the collection of subsets of $V_\gamma$ definable over $(V_\gamma,\in)$ with parameters from $V_\gamma$.

\begin{lemma}\label{lem:supercompact-rank-ZF}
If $\gamma$ is supercompact in Woodin's choiceless sense, then $V_\gamma\models\ZF$.
\end{lemma}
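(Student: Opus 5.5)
The plan is to verify the criterion of Lemma~\ref{lem:ZF-rank-criterion}. We must show that $\gamma$ is a limit ordinal above $\omega$, and that no function $f:a\to\gamma$ with $a\in V_\gamma$ has cofinal range. Then $V_\gamma\models\ZF$ follows at once.

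The argument imitates the proofs of Lemma~\ref{lem:ZF-ranks} and Lemma~\ref{lem:rank-Berkeley-descent}: an embedding whose critical point is sent to $\gamma$ cannot coexist with a short cofinal map into $\gamma$.

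First, $\gamma$ is a limit ordinal above $\omega$. It is a cardinal and $\gamma>\crit(j)$ for some embedding, which is the setup below, so I take this for granted.

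Now suppose toward a contradiction that $f:a\to\gamma$ is cofinal with $a\in V_\gamma$. Fix $\zeta<\gamma$ with $a\in V_\zeta$ and $\zeta>\rank(a)$. Choose some $\Gamma>\gamma$ with $V_\Gamma\prec_{\Sigma_1^*}V$. Here I must check that such ranks exist above any ordinal; I expect this to follow from Spoerl's treatment or a reflection argument for the $\Sigma_1^*$ notion, and it is the step I am least sure can be cited cleanly. Apply supercompactness of $\gamma$ to $\Gamma$ with parameter $p=\langle\zeta,a,f\rangle\in V_\Gamma$. This yields $\bar\Gamma<\gamma$, $\bar\delta$, $\bar p\in V_{\bar\Gamma}$, and an elementary $j:V_{\bar\Gamma+1}\to V_{\Gamma+1}$ with $\crit(j)=\bar\delta$, $j(\bar\delta)=\gamma$, and $j(\bar p)=p$.

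Write $\bar p=\langle\bar\zeta,\bar a,\bar f\rangle$. Then $\bar f:\bar a\to\bar\delta$ is cofinal, since "$f$ is cofinal into $\gamma$" is $\Delta_0$ and $j(\bar\delta)=\gamma$.

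The key point is to arrange that $j$ fixes $\bar a$ pointwise. I would enlarge the parameter to include $\zeta$ and demand $\crit(j)>\zeta$. If the definition does not directly supply a lower bound on $\crit(j)$, I would argue instead as follows. We have $\bar\delta\le\bar\Gamma<\gamma$. If $\bar\delta\le\zeta$, replace $\zeta$ by a larger choice: the cofinality of $f$ means that any $\zeta<\gamma$ can be used, and we only need $\rank(a)<\bar\delta$. Since $j(\bar a)=a$ and $j$ fixes ordinals below $\bar\delta$, the rank of $\bar a$ is at most that of $a$. The intended conclusion is $\bar a=a$ with $j\restr\bar a=\mathrm{id}$, which holds provided $\rank(a)<\bar\delta$.

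I expect the main obstacle to be securing $\crit(j)>\rank(a)$. To get it, I would use the standard trick of putting $\rank(a)$ into the parameter as a fixed ordinal $\xi$, which forces $j(\bar\xi)=\xi$. Alternatively, I would note that the set of possible $\bar\delta$ is unbounded in $\gamma$, which follows by applying the definition repeatedly with parameters encoding larger ordinals.

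Granting this, for $x\in a$ we get $j(\bar f)(x)=j(\bar f(x))=\bar f(x)<\bar\delta$, because $\bar f(x)<\bar\delta=\crit(j)$. But $j(\bar f)=f$ is cofinal in $\gamma>\bar\delta$, which is a contradiction. Hence Lemma~\ref{lem:ZF-rank-criterion} applies and $V_\gamma\models\ZF$.
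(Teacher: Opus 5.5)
Your argument is sound once one step is repaired, and it takes a different route from the paper. The paper never uses the embedding clause. It cites Spoerl (Definition~19 and the proof of Theorem~44) for the fact that no map $V_\beta\to\gamma$ with $\beta<\gamma$ is cofinal. It then extends a putative cofinal $f:a\to\gamma$ by a constant value to some $V_\beta\ni a$ and applies Lemma~\ref{lem:ZF-rank-criterion}.

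You instead derive this regularity directly from one instance of Woodin's definition, using the same idea as Lemma~\ref{lem:ZF-ranks} with a pulled-back map. This makes the lemma nearly self-contained, and it shows that a single embedding with $j(\crit(j))=\gamma$ and $a,f$ in its range is enough. You still need one external input, which you rightly flag: a rank $\Gamma>\gamma$ with $V_\Gamma\prec_{\Sigma_1^*}V$. This is a real dependency, since without such a rank the definition holds vacuously. It must be imported from Spoerl's treatment of $\Sigma_1^*$-correctness, which is also what the paper's citation relies on.

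The step you call the main obstacle is not one, but your stated reason does not deliver it. The equation $j(\bar\xi)=\xi$ by itself says nothing about $\crit(j)$; what does the work is the comparison with $\bar\delta$. Since $\zeta<\gamma=j(\bar\delta)$, elementarity gives $\bar\zeta<\bar\delta=\crit(j)$, so $\bar\zeta=j(\bar\zeta)=\zeta$. More generally, $\operatorname{ran}(j)\cap\gamma=\bar\delta$.

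Because $\zeta$ is already a component of your parameter, you get $a\in V_\zeta\subseteq V_{\crit(j)}$. A rank induction then gives $j(a)=a$ and $j(x)=x$ for every $x\in a$, and injectivity gives $\bar a=a$.

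Drop the detour about replacing $\zeta$ by a larger ordinal. The ordinal $\bar\delta$ depends on the embedding, which is chosen after $\zeta$ is fixed, so enlarging $\zeta$ controls nothing. With this repair, your final computation $f(x)=\bar f(x)<\bar\delta<\gamma$ completes the proof. It uses only $\bar f:\bar a\to\bar\delta$, not the cofinality of $\bar f$.
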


\begin{proof}

By \cite[Definition~19 and the proof of Theorem~44]{Spoerl}, no map $V_\beta\to\gamma$ with $\beta<\gamma$ has unbounded range.  If $a\in V_\gamma$, choose $\beta<\gamma$ with $a\in V_\beta$; any cofinal map $a\to\gamma$ would extend by a constant value to $V_\beta$.  Lemma~\ref{lem:ZF-rank-criterion} now gives $V_\gamma\models\ZF$.
\end{proof}

\begin{lemma}\label{lem:Woodin-endpoint-forcing}
{For every $k\geq1$ there is an effectively obtained first-order formula $\vartheta_k(p,e,\vec\tau)$, independent of the endpoint, with the following property.  If Woodin's iteration is defined through a supercompact cardinal $\gamma$, then $\vartheta_k$ defines over $V_\gamma$ the forcing relation for $\mathbb Q_\gamma$ for formulas of L\'evy complexity at most $k$, and the truth lemma holds for every $\mathbb Q_\gamma$-generic filter over $(V_\gamma,\Def(V_\gamma))$.}
\end{lemma}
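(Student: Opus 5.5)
The formula \(\vartheta_k\) will come from the standard recursive definition of the forcing relation, written out uniformly from the first-order definition of the iteration in \(\mathcal D_{\mathrm W}\).  The only endpoint-dependent object is the partial order \(\mathbb Q_\gamma\).  By Theorem~\ref{thm:Woodin-forcing}, \(\mathbb Q_\gamma\) is uniformly \(\Sigma_3\)-definable over \(V_\gamma\), by a formula \(q(\cdot)\) that does not mention \(\gamma\).  So I will write \(\vartheta_k\) relative to \(q\) and the order defined by \(\mathcal D_{\mathrm W}\).  Its construction is by meta-induction on \(k\), and it is effective in \(k\) because it is a syntactic translation.  By Lemma~\ref{lem:supercompact-rank-ZF}, \(V_\gamma\models\ZF\), so all the ZF-internal arguments below may be carried out inside \(V_\gamma\).

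The key steps run in order.  First, I define inside \(V_\gamma\) the atomic forcing relations \(p\Vdash\sigma\in\tau\) and \(p\Vdash\sigma=\tau\) by well-founded recursion on pairs of name ranks.  Here \(\mathbb Q_\gamma\) is a definable class of \(V_\gamma\), so the recursion is a class recursion.  Each stage, however, only involves names of bounded rank and the conditions appearing in them.  Replacement and Collection in \(V_\gamma\) (from \(V_\gamma\models\ZF\)) therefore make the recursion go through: the relation restricted to \(V_\beta\times V_\beta\) is a set for \(\beta<\gamma\), and its definition is uniform.  Second, I define the relation for negation, conjunction and existential quantification by the usual density clauses, e.g.
\[
p\Vdash\exists x\,\psi \iff \forall r\leq p\,\exists s\leq r\,\exists\sigma\ (s\Vdash\psi(\sigma)).
\]
For formulas of complexity at most \(k\) this yields a single formula, once the quantifiers over conditions are relativized via \(q\).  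The complexity of the result increases by a fixed amount per clause, and that is the only role of \(k\).

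Third, I prove the definability lemma: \(\vartheta_k\) restricted to \(V_\gamma\) satisfies the recursive clauses.  This is a routine induction carried out inside \(V_\gamma\).  Fourth, I prove the truth lemma for a filter \(G\) that is generic over \((V_\gamma,\Def(V_\gamma))\), meaning that \(G\) meets every dense subclass of \(\mathbb Q_\gamma\) definable over \(V_\gamma\).  The standard proof of the truth lemma only uses density of sets of the form \(\{s : s\Vdash\psi\vee s\Vdash\neg\psi\}\), together with the existential witness sets.  These are defined by \(\vartheta_k\) over \(V_\gamma\), so they belong to \(\Def(V_\gamma)\).  The proof is then by induction on formulas of complexity at most \(k\), using that \(V_\gamma[G]\) is computed from names in \(V_\gamma\).

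The main obstacle I expect is that \(\mathbb Q_\gamma\) is a proper class over \(V_\gamma\), a class forcing.  Class forcing can fail to have a definable forcing relation and a truth lemma.  The plan is to use the structure of Woodin's iteration to show that \(\mathbb Q_\gamma\) behaves like a set-forcing direct limit.  By Theorem~\ref{thm:Woodin-forcing}(2), each marked \(\beta<\gamma\) gives a factorization \(\mathbb Q_\gamma\cong\mathbb Q_\beta*\dot{\mathbb Q}_{\beta,\gamma}\) with canonical projection.  Supercompactness of \(\gamma\) makes marked stages unbounded below \(\gamma\), because supercompact stages are marked and the reflection provided by Woodin's supercompactness yields them cofinally.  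Any name in \(V_\gamma\) lies in some \(V_\beta\) with \(\beta\) marked, and I would show that the atomic forcing relation for such names is decided by \(\mathbb Q_\beta\), a set.  This gives the atomic relation as a set-forcing relation, uniformly in \(\beta\), and so makes the class recursion legitimate.  Verifying that restriction to \(\mathbb Q_\beta\) is correct is where I expect to lean most on Spoerl's Theorem~44(v) and Lemma~45, in particular on the projection and homogeneity properties of the tail forcing.
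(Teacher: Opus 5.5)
Your first step, defining the atomic relation by recursion on name rank, does not go through, and it is the crux of the lemma. Over $V_\gamma$ the forcing $\mathbb Q_\gamma$ is a proper class. The recursive clauses for $p\Vdash\sigma=\tau$ and $p\Vdash\sigma\in\tau$ are density conditions that quantify over \emph{all} $r\leq p$ in $\mathbb Q_\gamma$, not only over the conditions occurring in $\sigma$ and $\tau$. So each stage of the name-rank recursion is already a relation on a proper class of conditions. Replacement and Collection in $V_\gamma$ neither make these stages sets nor justify a recursion of classes. This is exactly where class forcings can fail to have an atomic forcing relation. Your third and fourth steps (definability lemma and truth lemma) are fine once a definable atomic relation satisfying the recursive clauses is available; that the atomic case yields the full forcing theorem is the general class-forcing result the paper cites. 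But both steps presuppose the first.

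Your final paragraph names the right repair, and it is the paper's idea: present $\mathbb Q_\gamma$ as a definable increasing union of set-sized complete subforcings with projections. As written, though, it is only a promise, and its supporting argument fails.
\begin{itemize}
\item Supercompact stages need not be cofinal below $\gamma$; take $\gamma$ least supercompact above $\mu$.
\item Woodin's reflection yields only $\Sigma_1^*$-correct ranks $\bar\gamma$ and critical points $\bar\delta$, which are not shown to be marked. So Theorem~\ref{thm:Woodin-forcing}(2) is not available at cofinally many stages.
\item Indexing by rank also needs every condition of $\mathbb Q_\gamma$ lying in $V_\beta$ to belong to the chosen subforcing, which you do not argue.
\end{itemize}

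The paper avoids intermediate marked stages altogether. Since $\gamma$ is marked and inaccessible, it cannot be an inverse-limit stage, so it is a direct-limit stage with $\sup_{\alpha<\gamma}\kappa_\alpha=\gamma$. The sets $P_\alpha=e_{\kappa_\alpha,\gamma}``\mathbb Q_{\kappa_\alpha}$ are complete subforcings by Spoerl's factorization at every stage: reductions strengthen the first coordinate and keep the tail. They form a sequence definable over $V_\gamma$, because initial segments of the recursion are computed correctly inside sufficiently large inaccessible $V_\xi$ with $\xi<\gamma$, and their union is $\mathbb Q_\gamma$. The paper then invokes the results of Holy et al.\ on forcings approachable by projections to obtain the atomic relation and the full forcing theorem. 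It applies these in an outer model in which $V_\gamma$ is countable.

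To complete your by-hand route you would need three things:
\begin{itemize}
\item this direct-limit filtration in place of marked stages;
\item the observation that every set of conditions in $V_\gamma$ lies in a single $P_\alpha$, since no map from a set in $V_\gamma$ is cofinal in $\gamma$, as in the proof of Lemma~\ref{lem:supercompact-rank-ZF};
\item an actual verification that the relation defined by projecting to $P_\alpha$ is independent of $\alpha$ and satisfies the class-forcing recursive clauses.
\end{itemize}
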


\begin{proof}
\begingroup

Fix the set-coded presentation used in \cite[Theorem~44(v)]{Spoerl}.  Given $\eta<\gamma$, $V_\xi$ computes the sequence $\langle\kappa_\beta:\beta\leq\eta\rangle$ correctly for every sufficiently large inaccessible $\xi<\gamma$.  An induction on the stages of the recursion in \cite[Definition~43]{Spoerl} shows that, for all sufficiently large inaccessible $\xi<\gamma$, the construction in $V_\xi$ agrees with that in $V_\gamma$ through stage $\eta$.  Thus the initial segment through $\eta$ belongs to some $V_\xi$ below $\gamma$.  Since this segment is the same in every sufficiently large $V_\xi$, the sequence of initial segments is definable over $V_\gamma$.

Since the endpoint is marked and inaccessible, it cannot be an inverse-limit stage, as that clause would give $\gamma=\kappa_\gamma=(\sup_{\alpha<\gamma}\kappa_\alpha)^+$.  Hence it is a direct-limit stage and $\sup_{\alpha<\gamma}\kappa_\alpha=\gamma$.  Let $e_{\kappa_\alpha,\gamma}$ be the canonical iteration embedding and set
\[
P_\alpha=e_{\kappa_\alpha,\gamma}``\mathbb Q_{\kappa_\alpha}.
\]
Replacement in $V_\gamma$ makes each $P_\alpha$ a set.  Under the factorization of \cite[Theorem~44(iii)(a)]{Spoerl}, $e_{\kappa_\alpha,\gamma}$ is the first-coordinate embedding, and reductions are obtained by strengthening that coordinate while leaving the tail unchanged.  Thus the $P_\alpha$ form a definable increasing sequence of complete subforcings with
$\mathbb Q_\gamma=\bigcup_{\alpha<\gamma}P_\alpha$.  Lemma~\ref{lem:supercompact-rank-ZF} gives $V_\gamma\models\ZF$.  By \cite[Lemma~6.6]{HolyClassForcing}, this union embeds densely into a $V_\gamma$-complete Boolean algebra $\mathbb B\in\Def(V_\gamma)$ approachable by projections.

Theorems~6.4 and~4.3 of \cite{HolyClassForcing} yield the atomic forcing relation and the full forcing theorem, respectively.  Pass to an outer model that contains the given generic and in which $V_\gamma$ is countable.  The structure $(V_\gamma,\Def(V_\gamma))$, the filtration, and its completeness properties are unchanged, so the forcing relation and truth lemma obtained there apply to the given generic.

For fixed standard $k$, recursion from the atomic relation through the codes of formulas of L\'evy complexity at most $k$ gives a single first-order definition, uniform in $\gamma$.  Assigning a fixed value to all remaining codes defines $\vartheta_k$.
\endgroup
\end{proof}

{For $k\geq1$, fix $\vartheta_k$ as in Lemma~\ref{lem:Woodin-endpoint-forcing} and put $r_k=c_{\mathcal D_{\mathrm W}}(\{\vartheta_k\})$.}

\begin{lemma}\label{lem:finite-windows}
Suppose $\theta<\Lambda$ are supercompact in Woodin's choiceless sense, Woodin's iteration is defined through $\Lambda$, and $\theta,\Lambda\in C^{(r_k)}$.  If $G\subseteq\mathbb Q_\Lambda$ is generic, then
\[
(V[G])_\theta\prec_{\Sigma_k}(V[G])_\Lambda.
\]
\end{lemma}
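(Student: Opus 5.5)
We prove $\Sigma_k$-elementarity by the Tarski--Vaught criterion, transferring each existential statement through the forcing relation of Lemma~\ref{lem:Woodin-endpoint-forcing}. By Theorem~\ref{thm:Woodin-forcing}(2), both $\theta$ and $\Lambda$ are marked, since they are supercompact. Hence $(V[G])_\theta=V_\theta[G_\theta]$ and $(V[G])_\Lambda=V_\Lambda[G]$, where $G_\theta=G\cap\mathbb Q_\theta$ is the projected generic. By Lemma~\ref{lem:supercompact-rank-ZF}, both $V_\theta$ and $V_\Lambda$ satisfy $\ZF$. Lemma~\ref{lem:Woodin-endpoint-forcing} then supplies the single formula $\vartheta_k$, which defines over $V_\theta$ and over $V_\Lambda$ the forcing relations for $\mathbb Q_\theta$ and $\mathbb Q_\Lambda$ respectively, for formulas of complexity at most $k$. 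In both cases the truth lemma holds.

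First we show $V_\theta\prec_{\Sigma_{r_k}}V_\Lambda$. Both ranks lie in $C^{(r_k)}$, so each is $\Sigma_{r_k}$-elementary in $V$, and hence $\theta$ is $\Sigma_{r_k}$-elementary in $\Lambda$. The choice $r_k=c_{\mathcal D_{\mathrm W}}(\{\vartheta_k\})$ is designed so that $V_\theta$ and $V_\Lambda$ agree on the instances of $\vartheta_k$ with parameters in $V_\theta$, and also on the predicates from $\mathcal D_{\mathrm W}$. These predicates define the iteration, its prefixes and the restriction maps. Consequently $V_\Lambda$ computes $\mathbb Q_\theta$ as the stage-$\theta$ prefix. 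Moreover, for $p\in\mathbb Q_\theta$ and $\mathbb Q_\theta$-names $\vec\tau\in V_\theta$, the relation ``$p\Vdash_{\mathbb Q_\theta}\psi(\vec\tau)$'' computed in $V_\theta$ agrees with the relation ``$e_{\theta,\Lambda}(p)\Vdash_{\mathbb Q_\Lambda}\psi(e_{\theta,\Lambda}(\vec\tau))$'' computed in $V_\Lambda$, whenever $\psi$ has complexity at most $k$. We obtain this by applying elementarity to $\vartheta_k$, together with the factorization of $\mathbb Q_\Lambda$ over $\mathbb Q_\theta$ given by Theorem~\ref{thm:Woodin-forcing}(2).

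Next comes the transfer. Suppose $\psi$ is $\Sigma_k$ with parameters $\vec x\in(V[G])_\theta$. Choose $\mathbb Q_\theta$-names $\vec\tau\in V_\theta$ with $\vec\tau^{G_\theta}=\vec x$; via the canonical embedding these are also $\mathbb Q_\Lambda$-names with the same values. Assume $(V[G])_\Lambda\models\psi(\vec x)$. The truth lemma over $V_\Lambda$ gives some $p\in G$ with $p\Vdash_{\mathbb Q_\Lambda}\psi(\vec\tau)$. By homogeneity of $\mathbb Q_\Lambda$ we may replace $p$ by its projection $p\restr\theta\in G_\theta$. Indeed, $\psi(\vec\tau)$ mentions only $\mathbb Q_\theta$-names, and the tail forcing is homogeneous, so the weakest deciding condition can be taken in the $\mathbb Q_\theta$-part. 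The agreement established in the previous paragraph then gives $p\restr\theta\Vdash_{\mathbb Q_\theta}\psi(\vec\tau)$ over $V_\theta$. The truth lemma over $V_\theta$ yields $(V[G])_\theta\models\psi(\vec x)$. The converse direction is symmetric. This gives full equivalence for $\Sigma_k$ formulas, which is the claimed $\Sigma_k$-elementarity.

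The main obstacle is the homogeneity step. We need that the $\mathbb Q_\Lambda$-forcing relation for statements about $\mathbb Q_\theta$-names depends only on the $\mathbb Q_\theta$-coordinate. Our first attempt is to use the quotient $\mathbb Q_\Lambda/G_\theta$, which is homogeneous in $V_\theta[G_\theta]$ by the factorization of Theorem~\ref{thm:Woodin-forcing}(2). If this proves delicate, the fallback is to avoid homogeneity altogether and apply $\Sigma_{r_k}$-elementarity directly to the statement ``$\exists p\in\dot G\cap\mathbb Q_\theta$ forcing $\psi$'' expressed through $\vartheta_k$. This works because $r_k$ was chosen with the extra margin of $2$ over $\ell(\vartheta_k)$.
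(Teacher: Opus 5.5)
Your setup matches the paper's. Markedness gives $(V[G])_\theta=V_\theta[G_\theta]$. Since $\theta,\Lambda\in C^{(r_k)}$, the formula $\vartheta_k$ with parameters from $V_\theta$ has the same truth value in $V_\theta$ and in $V_\Lambda$. So $q\Vdash_{\mathbb Q_\theta}\psi(\vec\tau)$ iff $q\Vdash_{\mathbb Q_\Lambda}\psi(\vec\tau)$ for $q\in\mathbb Q_\theta$ and $\mathbb Q_\theta$-names $\vec\tau\in V_\theta$. The upward direction, from $(V[G])_\theta$ to $(V[G])_\Lambda$, then follows using $G_\theta\subseteq G$.

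The gap is in the downward direction, which is the only nontrivial one; the two directions are not symmetric. It sits at the step ``by homogeneity of $\mathbb Q_\Lambda$ we may replace $p$ by $p\restr\theta$.'' Homogeneity of $\mathbb Q_\Lambda$ only lets the trivial condition decide statements whose parameters are check names. For $\psi(\vec\tau)$ with genuine $\mathbb Q_\theta$-names, you would need the quotient $\mathbb Q_\Lambda/G_\theta$ to be weakly homogeneous over $V[G_\theta]$ via automorphisms fixing $\vec\tau$. Theorem~\ref{thm:Woodin-forcing}(2) gives only a factorization with the canonical projection, not homogeneity of the tail. Homogeneity does not pass from an iteration to its tail in general. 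For example, Cohen forcing followed by the lottery sum of the trivial forcing and Cohen forcing has the Cohen algebra as Boolean completion. Yet two conditions with trivial first coordinate disagree about whether a real is added over the first-stage extension. Your fallback does not repair this: the generic filter is not an object of $V$, so $\Sigma_{r_k}$-elementarity between $V_\theta$ and $V_\Lambda$ cannot be applied to a statement about $\dot G$.

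No homogeneity is needed. Apply your agreement also to $\neg\psi$ (this is why $c_{\mathcal D_{\mathrm W}}$ closes under negation), and use that $p\restr\theta$ is a reduction of $p$. Put $D=\{q\in\mathbb Q_\theta:q\leq p\restr\theta\text{ and }q\Vdash_{\mathbb Q_\theta}\psi(\vec\tau)\}$. Suppose some $q_0\leq p\restr\theta$ had no extension in $D$. Then $q_0\Vdash_{\mathbb Q_\theta}\neg\psi(\vec\tau)$, hence $q_0\Vdash_{\mathbb Q_\Lambda}\neg\psi(\vec\tau)$ by agreement. But $q_0$ is compatible with $p$ by the reduction property, contradicting $p\Vdash_{\mathbb Q_\Lambda}\psi(\vec\tau)$. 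So $D$ is dense below $p\restr\theta\in G_\theta$. Genericity yields some $q\in D\cap G_\theta$, and the truth lemma over $V_\theta$ gives $(V[G])_\theta\models\psi(\vec x)$.

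This is the paper's argument. In effect it shows your replacement of $p$ by $p\restr\theta$ is harmless here, but the reason is the $C^{(r_k)}$ agreement together with the reduction property, not homogeneity. You should also verify that $G_\theta$ is generic over $(V_\theta,\Def(V_\theta))$, since the truth lemma of Lemma~\ref{lem:Woodin-endpoint-forcing} is stated for such filters. Every class definable over $V_\theta$ is a set in $V$, so this follows from the complete embedding of $\mathbb Q_\theta$ into $\mathbb Q_\Lambda$.
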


\begin{proof}
  Both endpoints are marked by Theorem~\ref{thm:Woodin-forcing}.  Let $G_\theta$ be induced by the complete embedding of $\mathbb Q_\theta$ into $\mathbb Q_\Lambda$; in the nested presentation, $G_\theta=G\cap\mathbb Q_\theta$.  Every dense subclass of $\mathbb Q_\theta$ definable over $V_\theta$ with set parameters is externally a set, since $V_\theta$ and its satisfaction relation are sets.  Completeness therefore gives that $G_\theta$ is generic over $(V_\theta,\Def(V_\theta))$.  Since $\theta$ is marked,
  \[
  V_\theta[G_\theta]=(V[G])_\theta.
  \]
  Thus every $x\in(V[G])_\theta$ is $\tau^{G_\theta}$ for some $\mathbb Q_\theta$-name $\tau\in V_\theta$.

Since $\theta,\Lambda\in C^{(r_k)}$, predicate elimination for $\mathcal D_{\mathrm W}$ gives, for every formula $\varphi$ of complexity at most $k$, every tuple of $\mathbb Q_\theta$-names $\vec\tau\in V_\theta$, and every $q\in\mathbb Q_\theta$,
\begin{equation}\label{eq:endpoint-force-agreement}
q\Vdash_{\mathbb Q_\theta}\varphi(\vec\tau)
\quad\text{if and only if}\quad
q\Vdash_{\mathbb Q_\Lambda}\varphi(\vec\tau).
\end{equation}
The same equivalence holds for $\neg\varphi$ by the closure under negation in the definition of $c_{\mathcal D_{\mathrm W}}$.

Suppose first that $(V[G])_\Lambda\models\varphi(\vec\tau^{G_\theta})$.  By Lemma~\ref{lem:Woodin-endpoint-forcing}, choose $p\in G$ with $p\Vdash_{\mathbb Q_\Lambda}\varphi(\vec\tau)$.  The dense set used below is
\[
D=\{q\leq p\restr\theta:q\in\mathbb Q_\theta\text{ and }q\Vdash_{\mathbb Q_\theta}\varphi(\vec\tau)\}.
\]
We show that $D$ is dense below $p\restr\theta$.  Otherwise, some $q_0\leq p\restr\theta$ has no extension in $D$.  The negation clause then gives $q_0\Vdash_{\mathbb Q_\theta}\neg\varphi(\vec\tau)$, and \eqref{eq:endpoint-force-agreement} gives $q_0\Vdash_{\mathbb Q_\Lambda}\neg\varphi(\vec\tau)$.  The reduction property gives a common extension of $p$ and $q_0$, contradicting the two forcing statements.  The class $D\cup\{r\in\mathbb Q_\theta:r\perp p\restr\theta\}$ is definable and dense in $\mathbb Q_\theta$.  Genericity and $p\restr\theta\in G_\theta$ therefore give $D\cap G_\theta\ne\varnothing$.  For $q\in D\cap G_\theta$, forcing soundness from Lemma~\ref{lem:Woodin-endpoint-forcing} gives $(V[G])_\theta\models\varphi(\vec\tau^{G_\theta})$.

Conversely, if $(V[G])_\theta\models\varphi(\vec\tau^{G_\theta})$, Lemma~\ref{lem:Woodin-endpoint-forcing} gives $q\in G_\theta$ such that $q\Vdash_{\mathbb Q_\theta}\varphi(\vec\tau)$.  Equation~\eqref{eq:endpoint-force-agreement} transfers this assertion to $\mathbb Q_\Lambda$, and forcing soundness gives $(V[G])_\Lambda\models\varphi(\vec\tau^{G_\theta})$.  This proves the required $\Sigma_k$-elementarity.
\end{proof}

\begin{lemma}\label{lem:finite-reflection}
{Let $S\geq s_{\mathcal D}(\Sigma)$, suppose $E_S(\delta)$, and let $\eta<\delta<\theta<\theta+\omega<\rho$, with $\rho\in C^{(c_{\mathcal D}(\Sigma))}$.  If $\alpha<\theta$, $x\in V_\rho$, and
\[
V\models e^{\mathcal D}
[\eta,\delta,\theta,\rho,\alpha,x]
\quad\text{for every }e\in\Sigma,
\]
then $\chi_{\mathcal D,\Sigma}(\eta,\delta,\theta,\rho,\alpha,x)$ holds.}
\end{lemma}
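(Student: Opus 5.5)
The plan is to reflect the true configuration downward through a $C^{(S)}$-extendibility embedding with critical point $\delta$, using the standard extendibility argument. Put $c=c_{\mathcal D}(\Sigma)$. Choose $\lambda\in C^{(S)}$ with $\lambda>\rho+\omega$. Apply $E_S(\delta)$ to get $\mu\in C^{(S)}$ and an elementary embedding $j:V_\lambda\to V_\mu$ with $\crit(j)=\delta$ and $j(\delta)>\lambda$. Since $\eta<\delta$, we have $j(\eta)=\eta$.

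The next step is to observe that in $V_\mu$ the parameters $\delta,\theta,\rho,\alpha,x$ witness the existential statement
\[
\chi^*:=\exists\bar\delta\,\exists\bar\theta\,\exists\bar\rho\,\exists\bar\alpha\,\exists\bar x\,\exists J\;\Phi_{\mathcal D,\Sigma}\bigl[\eta,j(\delta),j(\theta),j(\rho),j(\alpha),j(x)\bigr],
\]
with $J=j\restr V_\rho:V_\rho\to V_{j(\rho)}$. We check the clauses of $\Phi_{\mathcal D,\Sigma}$ one at a time.
\begin{itemize}
\item The order clauses hold. We have $\eta<\delta<\theta<\theta+\omega<\rho<\lambda<j(\delta)<j(\theta)<j(\theta)+\omega<j(\rho)$.
\item The map $J$ is elementary with respect to canonical satisfaction, by Lemma~\ref{lem:restriction-elementarity}, since $\operatorname{Sat}(V_\rho)\in V_\lambda$. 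It has critical point $\delta$ and sends $\delta,\theta,\alpha,x$ to their $j$-images. The map $J$ lies in $V_\mu$ because $\mu>j(\rho)+\omega$, which holds as $\mu\in C^{(S)}$ is a limit above $j(\lambda)$.
\item The clause $\rho\in C^{(c)}$ holds in $V$, hence in $V_\mu$. This uses that $\mu\in C^{(S)}$ and $S>c$, so membership in $C^{(c)}$ (which is $\Pi_c$) is absolute to $V_\mu$.
\item The literal conjunction $\bigwedge_{e\in\Sigma}e^{\mathcal D}[\eta,\delta,\theta,\rho,\alpha,x]$ holds in $V$ by hypothesis. It transfers to $V_\mu$ because $\mu\in C^{(S)}$ and $S\ge c$ exceeds the complexity of these formulas after predicate elimination.
\end{itemize}
Hence $V_\mu\models\chi^*$, which is exactly $V_\mu\models j(\chi_{\mathcal D,\Sigma})$ evaluated at $j$ of the parameters.

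Next we pull this back along $j$. The parameters $\eta,\delta,\theta,\rho,\alpha,x$ lie in $V_\lambda$, so elementarity of $j$ gives $V_\lambda\models\chi_{\mathcal D,\Sigma}(\eta,\delta,\theta,\rho,\alpha,x)$. Since $\lambda\in C^{(S)}$ and $S\ge s_{\mathcal D}(\Sigma)>\ell(\ulcorner\chi_{\mathcal D,\Sigma}\urcorner)+1$, this is upward absolute to $V$. Alternatively, one can note that the witnesses found in $V_\lambda$ satisfy the set-sized clauses absolutely, and that $\bar\rho\in C^{(c)}$ and the $e^{\mathcal D}$-clauses transfer from $V_\lambda$ to $V$ by the same correctness.

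The main obstacle is bookkeeping with complexities, which is exactly why $c$ and $s$ are defined as they are. We must confirm that every clause of $\Phi_{\mathcal D,\Sigma}$ is evaluated the same way in $V$, in $V_\lambda$ and in $V_\mu$. This rests on three facts. First, ``$J:V_{\bar\rho}\to V_\rho$ is elementary'' and the rank equations are $\Delta_1$ in the parameters, so they are absolute between $C^{(1)}$ ranks. Second, $\bar\rho\in C^{(c)}$ is $\Pi_c$, with the care taken for $c=1$ as in Lemma~\ref{lem:Cn-downward}. Third, the formulas $e^{\mathcal D}$ have complexity below $c$. With $S\ge s_{\mathcal D}(\Sigma)\ge c+2$, all of these fall inside the correctness window of $C^{(S)}$, and the argument goes through.
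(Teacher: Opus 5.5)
Your proposal is correct and follows the paper's own argument. You witness $\chi_{\mathcal D,\Sigma}$ in the target rank with the tuple $(\delta,\theta,\rho,\alpha,x,j\restr V_\rho)$, using $C^{(S)}$-correctness of the target for $\rho\in C^{(c_{\mathcal D}(\Sigma))}$ and for the $\Sigma$-conjunction, then pull back by elementarity and lift to $V$ by $C^{(S)}$-correctness of the source.

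One small slip: $j(\lambda)$ is undefined, since $\lambda\notin\operatorname{dom}(j)$. The right justification for $J\in V_\mu$ is $j(\rho)+\omega=j(\rho+\omega)<\mu$, which holds because $\rho+\omega<\lambda$.
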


\begin{proof}
Choose $\mu\in C^{(S)}$ above $\rho+\omega$ and let $k:V_\mu\to V_\nu$ witness $E_S(\delta)$.  Thus $\nu\in C^{(S)}$, $\crit(k)=\delta$, and $k(\delta)>\mu>\rho$.  The graph of $k\restr V_\rho$ has rank below $k(\rho)+\omega=k(\rho+\omega)<\nu$, and Lemma~\ref{lem:restriction-elementarity} shows that this restriction is elementary from $V_\rho$ to $V_{k(\rho)}$.  Use the following tuple:
\[
(\bar\delta,\bar\theta,\bar\rho,\bar\alpha,\bar x,J)
=(\delta,\theta,\rho,\alpha,x,k\restr V_\rho).
\]
Thus $V_\nu\models{\chi_{\mathcal D,\Sigma}}(\eta,k(\delta),k(\theta),k(\rho),k(\alpha),k(x))$.
Here $\rho<k(\delta)$ and $k(\eta)=\eta$.  By assumption, the original tuple satisfies the conjunction over $\Sigma$, and the $C^{(S)}$-correctness of $V_\nu$ makes the same conjunction true there.  Elementarity of $k$ pulls the assertion back to $V_\mu$.  Since $\mu\in C^{(S)}$ and the formula and its negation have L\'evy complexity below $S$, the reflected assertion is true in $V$.
\end{proof}

\begin{lemma}\label{lem:ordinary-correct-supercompact}
Every $E_s$-cardinal is $\Sigma_{s+2}$-correct.  Moreover, there is a fixed $d_{\mathrm W}<\omega$ such that every $E_s$-cardinal is supercompact in Woodin's sense whenever $s\geq d_{\mathrm W}$.
\end{lemma}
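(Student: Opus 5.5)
We prove the two clauses separately; the first is a reflection argument, the second unwinds Woodin's definition.

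\emph{$\Sigma_{s+2}$-correctness.} Let $\kappa$ satisfy $E_s$. We show $V_\kappa\prec_{\Sigma_{s+2}}V$ by the Tarski--Vaught criterion, by induction on $m\leq s+2$.
\begin{itemize}
\item[(i)] Fix a $\Pi_{s+1}$ formula $\psi$ and $a\in V_\kappa$ with $V\models\exists y\,\psi(y,a)$. Choose a witness $y$ and some $\lambda\in C^{(s)}$ with $\lambda>\max\{\kappa,\rank(y)\}$.
\item[(ii)] Take $j:V_\lambda\to V_\theta$ with $\theta\in C^{(s)}$, $\crit(j)=\kappa$, and $j(\kappa)>\lambda$. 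Since $j(a)=a$, the target $V_\theta$ contains a witness for $\exists y\,\psi(y,a)$ of rank below $j(\kappa)$.
\item[(iii)] Pulling back along $j$, $V_\lambda$ sees a witness of rank below $\kappa$, that is, a witness in $V_\kappa$.
\end{itemize}
The difficulty is that $V_\lambda$ is only $\Sigma_s$-correct. A $\Pi_{s+1}$ statement about $y$ is therefore only upward-absolute from $V_\lambda$, not downward-absolute, so step (iii) yields a witness in $V_\kappa$ only in the sense of $V_\lambda$.

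The fix I would try is to transport along the full $C^{(s)}$-extendibility. First establish, by induction, the intermediate claim that $\kappa\in C^{(s)}$. Then apply the extendibility clause uniformly to every $\lambda'\in C^{(s)}$ above $\lambda$, so that the witness found in $V_\kappa$ satisfies $\psi$ in cofinally many $C^{(s)}$ ranks. A $\Pi_{s+1}$ statement that holds in a cofinal class of $\Sigma_s$-correct ranks is true in $V$, which gives two quantifier levels beyond $s$.

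This step is the main obstacle. The exact bookkeeping of the quantifier levels, and whether $\Sigma_{s+2}$ rather than $\Sigma_{s+1}$ is achievable, is where the argument needs the closure conditions $j(\kappa)>\lambda$ and $\theta\in C^{(s)}$. The standard template here is Bagaria's proof that $C^{(n)}$-extendibles lie in $C^{(n+2)}$, adapted so as not to use Choice.

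\emph{Woodin supercompactness.} Let $d_{\mathrm W}$ be large enough that $\Sigma_{d_{\mathrm W}}$-correct ranks compute the predicate ``$V_\gamma\prec_{\Sigma_1^*}V$'' correctly. Fix $\gamma>\kappa$ with $V_\gamma\prec_{\Sigma_1^*}V$ and some $a\in V_\gamma$. The statement
\begin{quote}
``there are $\bar\gamma,\bar\delta,\bar a$ and an elementary $j:V_{\bar\gamma+1}\to V_{\gamma+1}$ with $j(\bar\delta)=\delta$, $j(\bar a)=a$, and $V_{\bar\gamma}\prec_{\Sigma_1^*}V$''
\end{quote}
has bounded complexity. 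The plan is to apply the analogue of Lemma~\ref{lem:finite-reflection}, or to argue directly, as follows.
\begin{itemize}
\item[(i)] Take $\lambda\in C^{(s)}$ above $\gamma+\omega$ and an embedding $k:V_\lambda\to V_\nu$ witnessing $E_s(\kappa)$.
\item[(ii)] In $V_\nu$, the map $k\restr V_{\gamma+1}:V_{\gamma+1}\to V_{k(\gamma)+1}$ witnesses the displayed statement for the parameters $k(\kappa),k(\gamma),k(a)$, with $\bar\gamma=\gamma<k(\kappa)$.
\item[(iii)] The clause $V_\gamma\prec_{\Sigma_1^*}V$ transfers into $V_\nu$ by correctness of $\nu$.
\item[(iv)] Elementarity of $k$ then returns a witness below $\kappa$ for $\gamma$ and $a$ inside $V_\lambda$, and correctness of $\lambda$ (since $s\geq d_{\mathrm W}$) makes that witness genuine in $V$.
\end{itemize}
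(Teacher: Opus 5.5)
Your second clause follows the paper's argument. You restrict an $E_s$-witness $k:V_\mu\to V_\nu$ to $V_{\gamma+1}$ and note that it witnesses the small-embedding clause in $V_\nu$ for $k(\kappa),k(\gamma),k(a)$. You then pull back by elementarity and use $s\geq d_{\mathrm W}$ to make the reflected assertion true in $V$. Your display should also include the clauses $\crit(j)=\bar\delta$ and $\bar\gamma<\delta$; $k\restr V_{\gamma+1}$ satisfies them with $\bar\delta=\kappa$ and $\bar\gamma=\gamma$.

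\textbf{The gap is in the first clause.} The paper does not reprove it: it observes that an $E_s$-cardinal is $s$-choiceless extendible in Mohammd's sense and cites \cite[Proposition~2.2]{Mohammd}. A direct proof is a legitimate alternative, but yours stops at the decisive step, and you say yourself that you cannot tell whether $\Sigma_{s+2}$ or only $\Sigma_{s+1}$ comes out. Your proposed repair does not work as written. The element of $V_\kappa$ obtained by reflecting at $\lambda'$ depends on $\lambda'$, so no single witness is known to satisfy $\psi$ in cofinally many $C^{(s)}$ ranks. It can be rescued: $\Pi_{s+1}$ truth passes downward between $C^{(s)}$ ranks, so the sets $\{y'\in V_\kappa:V_{\lambda'}\models\psi(y',a)\}$ are nonempty and shrink as $\lambda'$ grows, and Replacement forces them to stabilize at a nonempty set. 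In addition, the intermediate claim must be $\kappa\in C^{(s+1)}$, not $\kappa\in C^{(s)}$. Otherwise you cannot transfer $V_\kappa\models\exists y\,\psi(y,a)$ up to $V$.

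\textbf{The missing idea is simpler.} $E_s(\kappa)$ applies to every $\lambda\in C^{(s)}$ above $\kappa$, so in step~(i) you may take $\lambda\in C^{(s+1)}$. The target needs only $\theta\in C^{(s)}$, because a $\Pi_{s+1}$ statement true in $V$ remains true in $V_\theta$. Since $\rank(y)<\lambda<j(\kappa)$, this gives $V_\theta\models\exists y\,(\rank(y)<j(\kappa)\wedge\psi(y,a))$. Elementarity then yields $y'\in V_\kappa$ with $V_\lambda\models\psi(y',a)$, and $\lambda\in C^{(s+1)}$ makes $\psi(y',a)$ true in $V$. Run this for every $m\leq s+2$. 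At each stage the inductive hypothesis $\kappa\in C^{(m-1)}$ supplies both directions of the Tarski--Vaught step, and the induction ends with $\kappa\in C^{(s+2)}$. The extra quantifier level comes from the choice of source rank, not from cofinally many ranks, and Choice is never used.
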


\begin{proof}
An $E_s$-cardinal is $s$-choiceless extendible in the sense of Mohammd: the ordinary witness has critical point $\delta>\xi$ for every prescribed $\xi<\delta$.  Its $\Sigma_{s+2}$-correctness is therefore \cite[Proposition~2.2]{Mohammd}.

Fix formulas $\sigma_0(\xi)$ and $\sigma_1(\delta,\gamma,a)$ expressing $V_\xi\prec_{\Sigma_1^*}V$ and the small-embedding clause in Spoerl's Definition~21, and choose $d_{\mathrm W}$ above the L\'evy complexities of these formulas and their negations.  Let $s\geq d_{\mathrm W}$ and assume $E_s(\delta)$.  Given $\gamma>\delta$ with $V_\gamma\prec_{\Sigma_1^*}V$ and $a\in V_\gamma$, choose an $E_s$-witness $k:V_\mu\to V_\nu$ with $\mu\in C^{(s)}$ above $\gamma+\omega$.  By Lemma~\ref{lem:restriction-elementarity}, $k\restr V_{\gamma+1}$ witnesses in $V_\nu$ the required assertion for $k(\gamma),k(\delta),k(a)$.  Elementarity reflects this to $V_\mu$, and the choice of $d_{\mathrm W}$ makes the reflected map and the $\Sigma_1^*$-correctness of its source absolute to $V$.  Thus $\delta$ is supercompact in Woodin's sense.
\end{proof}

We now apply Woodin's forcing at an $E_s$-cardinal for large enough $s$ and use Lemmas~\ref{lem:finite-reflection} and~\ref{lem:ordinary-correct-supercompact} to obtain a $\ZFC$ rank model satisfying any given finite subset of $\ZFC+\VP$.

\begin{theorem}\label{thm:finite-restoration}

For every $N\geq1$ there is an effectively determined integer $t_N$ such that the following holds.  Suppose $V\models\ZF+\UE+\neg\AC$, and let $\mu$ be least such that $\DC_\mu$ fails.  Let $\Lambda>\mu$ be an $E_{t_N}$-cardinal, and let $G\subseteq\mathbb Q_\Lambda$ be generic for Woodin's forcing.  Then $M=(V[G])_\Lambda$ satisfies $\ZFC$ and, for every $1\leq n\leq N$, has a proper class of $C^{(n)}$-extendible cardinals.
\end{theorem}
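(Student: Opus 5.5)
The plan is to choose $t_N$ large enough that every $E_{t_N}$-cardinal is Woodin-supercompact (Lemma~\ref{lem:ordinary-correct-supercompact}, so $t_N\geq d_{\mathrm W}$) and is correct enough for all the reflection arguments below. Concretely, I would fix the finite list $\Sigma_N$ of formulas needed to say ``$\bar\delta$ is $C^{(n)}$-extendible in the forcing extension cut at a marked supercompact stage'', for every $n\leq N$. These formulas are written with the predicates $\mathcal D_{\mathrm W}$ and the forcing relations $\vartheta_k$ for $k$ somewhat above $N$. I would then set $t_N$ to be the maximum of $d_{\mathrm W}$, $r_k$ for the relevant $k$, and $s_{\mathcal D_{\mathrm W}}(\Sigma_N)$, plus a fixed margin. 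This is evidently effective in $N$.

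$M\models\ZFC$ is Theorem~\ref{thm:Woodin-forcing}(1), since $\Lambda$ is Woodin-supercompact. For the large cardinals, fix $n\leq N$ and $\eta<\Lambda$; I must find $\delta'\in(\eta,\Lambda)$ that $M$ regards as $C^{(n)}$-extendible.

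\emph{Step 1 (correctness of $\Lambda$ in the extension).} $\Lambda$ is $\Sigma_{t_N+2}$-correct in $V$ by Lemma~\ref{lem:ordinary-correct-supercompact}. By Lemma~\ref{lem:finite-windows} applied to pairs of marked supercompact stages, $(C^{(n)})^M$ below $\Lambda$ is computed through windows $(V[G])_\theta$ with $\theta<\Lambda$ supercompact and in $C^{(r_k)}$. Such $\theta$ are unbounded below $\Lambda$ by reflecting $E_{t_N}(\Lambda)$ downward.

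\emph{Step 2 (reflection).} Assume towards a contradiction that $M$ has no $C^{(n)}$-extendible above $\eta$. Equivalently, a $\Sigma$-type statement about $(\eta,\Lambda)$ holds, expressed via the forcing relation over $V_\Lambda$. Apply Lemma~\ref{lem:finite-reflection} with $\delta=\Lambda$ (using $E_S(\Lambda)$), some $\theta\geq\Lambda$ chosen suitably, and $\rho\in C^{(c)}$. This yields $\bar\delta<\delta$ and $J:V_{\bar\rho}\to V_\rho$ with critical point $\bar\delta>\eta$ and $J(\bar\delta)=\Lambda$, with the relevant properties reflected to $\bar\delta$. In particular $\bar\delta$ is a marked Woodin-supercompact stage.

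Rather than argue by contradiction, I would instead take the $E_{t_N}$ witnesses for $\Lambda$ directly. For $\lambda\in(C^{(n)})^M$ above some candidate $\bar\delta$, take $k:V_{\mu}\to V_{\nu}$ with $\crit(k)=\Lambda$. Then pull back via Lemma~\ref{lem:finite-reflection}: the statement ``$\Lambda$ admits embeddings $V_{\lambda'}\to V_{\theta'}$ with $J(\mathbb Q)=\mathbb Q$ at marked targets'' reflects to some $\bar\delta<\Lambda$ above $\eta$. Theorem~\ref{thm:Woodin-forcing}(3) then lifts each such $J:V_{\bar\gamma+\omega}\to V_{\gamma+\omega}$ to $J^*:(V[G])_{\bar\gamma+1}\to(V[G])_{\gamma+1}$. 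Restricting $J^*$ to $(V[G])_\lambda$ witnesses $E_n(\bar\delta)$ in $M$, once the targets are shown to lie in $(C^{(n)})^M$ by Step~1. The lifted restriction lies in $M$ because $M$ contains $(V[G])_{\gamma+1}$ for $\gamma<\Lambda$.

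The main obstacle is the bookkeeping of \emph{both} ends. The embeddings must map $\mathbb Q_{\bar\gamma}$ to $\mathbb Q_\gamma$, which needs $\mathcal D_{\mathrm W}$ among the constants and both ranks marked. The extension's $C^{(n)}$ must also be transported through $J^*$. I would handle this by including in $\Sigma_N$ the clause ``$\gamma$ is marked, Woodin-supercompact and in $C^{(r_{n+2})}$''. I would then use Lemma~\ref{lem:finite-windows} to convert ground $C^{(r_k)}$-membership of such stages into $\Sigma_n$-correctness of their extension ranks inside $M$.
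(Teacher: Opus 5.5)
Your outline uses the same ingredients as the paper: $\ZFC$ from Theorem~\ref{thm:Woodin-forcing}(1), lifting via Theorem~\ref{thm:Woodin-forcing}(3), and $\Sigma_n$-correctness of lifted ranks via Lemma~\ref{lem:finite-windows}. However, the step that produces the embeddings does not work as written. To verify $E_n(\bar\delta)$ in $M$ you need two things. First, one fixed critical point $\bar\delta$. Second, for every $\lambda\in(C^{(n)})^M$, a ground embedding $J:V_{\bar\gamma+\omega}\to V_{\gamma+\omega}$ with $\crit(J)=\bar\delta$, $\lambda<\bar\gamma$, $J(\mathbb Q_{\bar\gamma})=\mathbb Q_\gamma$, and $\gamma$ a marked stage below $\Lambda$. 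Without that bound, Theorem~\ref{thm:Woodin-forcing}(3) says nothing for $G\subseteq\mathbb Q_\Lambda$ and the lift is not in $M$.

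Neither of your sources supplies such embeddings. The $E_{t_N}$-witnesses for $\Lambda$ have critical point $\Lambda$ and fix $V_\Lambda$ pointwise, so they contribute nothing directly. Lemma~\ref{lem:finite-reflection} applied at $\delta=\Lambda$ reflects the \emph{critical point}: the $\bar\delta$ it returns is a preimage of $\Lambda$ and depends on the instance reflected. If you reflect once a universal statement such as ``$\Lambda$ admits $\mathbb Q$-preserving embeddings with marked targets at every $\lambda'$'', you get a single $\bar\delta$. But the reflected property then holds in $V$, and its witnesses may have targets above $\Lambda$. If instead you reflect a local statement inside $V_\rho$ for each $\lambda$, the targets lie below $\bar\rho<\Lambda$, but you get a different $\bar\delta$ for each $\lambda$. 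Your remark that the lift lies in $M$ ``because $M$ contains $(V[G])_{\gamma+1}$ for $\gamma<\Lambda$'' assumes exactly the missing bound $\gamma<\Lambda$.

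The paper avoids this by fixing the \emph{image} rather than the critical point. It takes an $E_{s_N}$-cardinal $\delta\in(\eta,\Lambda)$ and chooses $\theta<\rho<\Lambda$ in advance. It then applies Lemma~\ref{lem:finite-reflection} at $\delta$, with $\Xi_N$ reflecting marking, Woodin supercompactness and $C^{(r_{N+1})}$-membership down to $\bar\theta$. The resulting $J_0:V_{\bar\rho}\to V_\rho$ with $J_0(\bar\delta)=\delta$ lives inside $V_\Lambda$, and its lift restricts to some $e:M_{\bar\theta}\to M_\theta$ in $M$. The Bagaria--Poveda characterization \cite[Corollary~2.7]{BagariaPoveda} then finishes, since it asks for exactly such small embeddings with varying critical point and fixed image $\delta$.

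If you want to keep your direct $E_n$ route, drop the reflection at $\Lambda$ altogether:
\begin{enumerate}
\item By $\UE$ and $\Lambda\in C^{(t_N+2)}$ (Lemma~\ref{lem:ordinary-correct-supercompact}), choose an $E_s$-cardinal $\bar\delta\in(\eta,\Lambda)$ with $V_\Lambda\models E_s(\bar\delta)$. Its extendibility witnesses $k:V_{\mu'}\to V_{\nu'}$ can then be taken with $\nu'<\Lambda$.
\item For $\lambda\in(C^{(n)})^M$, pick a Woodin-supercompact $\bar\gamma\in C^{(r_{N+1})}\cap(\lambda,\Lambda)$ and $\mu'>\bar\gamma+\omega$. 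Lift $k\restr V_{\bar\gamma+\omega}$, noting $k(\bar\delta)>\bar\gamma$.
\item Transport $M_\lambda\prec_{\Sigma_n}M_{\bar\gamma}$ to $M_{k(\lambda)}\prec_{\Sigma_n}M_{k(\bar\gamma)}\prec_{\Sigma_n}M$ using Lemma~\ref{lem:finite-windows}.
\end{enumerate}
This version avoids the Bagaria--Poveda theorem. The cost is that $t_N$ must exceed the complexity of $E_s$ together with the clauses making $k$ preserve $\mathbb Q$, marking, supercompactness and $C^{(r_{N+1})}$.
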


\begin{proof}
For $\gamma\leq\Lambda$, write $M_\gamma=(V[G])_\gamma$ for the rank-$\gamma$ part of $M$.  Let $k=N+1$ and put $r=r_k$.  Let $\Xi_N$ be the finite set of formulas asserting that a stage $\theta$ belongs to $C^{(r)}$, is supercompact in Woodin's sense, and has the prefix, order, and restriction maps defined by $\mathcal D_{\mathrm W}$.  The two complexity bounds needed below are
\[
c_N=c_{\mathcal D_{\mathrm W}}(\Xi_N),\qquad
s_N=\max\{r,d_{\mathrm W},s_{\mathcal D_{\mathrm W}}(\Xi_N)\}.
\]
Let $t_N$ be four more than the maximum of $s_N$ and the L\'evy complexities of $E_{s_N}(\xi)$, the assertion that the $E_{s_N}$-cardinals are unbounded, membership in $C^{(c_N)}$, and the assertion that $C^{(c_N)}$ is unbounded.  Once the coding is fixed, $t_N$ is primitive recursive in $N$.

By Lemma~\ref{lem:ordinary-correct-supercompact}, $\Lambda\in C^{(t_N+2)}$ and $\Lambda$ is supercompact in Woodin's sense.  Theorem~\ref{thm:Woodin-forcing} gives $M\models\ZFC$.  Since $\UE$ asserts that the $E_{s_N}$-cardinals are unbounded, the choice of $t_N$ implies that they are unbounded below $\Lambda$; the same argument applies to $C^{(c_N)}\cap\Lambda$.

For each $1\leq n\leq N$, the Bagaria--Poveda characterization says that, in $M$, a cardinal $\delta$ is $C^{(n)}$-extendible if and only if, for a proper class of $\theta\in(C^{(n+1)})^M$ and every $\alpha<\theta$, there are $\bar\delta<\bar\theta<\delta$, $\bar\alpha<\bar\theta$, and an elementary embedding $e:M_{\bar\theta}\to M_\theta$ such that $\bar\theta\in(C^{(n+1)})^M$, $\crit(e)=\bar\delta$, $e(\bar\delta)=\delta$, and $e(\bar\alpha)=\alpha$ \cite[Corollary~2.7]{BagariaPoveda}.

Fix $\eta<\Lambda$ above $\mu$, and choose an $E_{s_N}$-cardinal $\delta>\eta$.  Let $\theta>\delta$ be another such cardinal and let $\alpha<\theta$.  Choose $\rho\in C^{(c_N)}\cap\Lambda$ above $\theta+\omega$, and let $x\in V_\rho$ code $\mathbb Q_\theta$ with its order and restriction maps.  Lemma~\ref{lem:ordinary-correct-supercompact} and Woodin's recursion show that these objects satisfy $\Xi_N$.  Lemma~\ref{lem:finite-reflection} gives
\[
\eta<\bar\delta<\bar\theta<\bar\theta+\omega<\bar\rho<\delta
\]
and $J_0:V_{\bar\rho}\to V_\rho$ such that
\[
\crit(J_0)=\bar\delta,\quad J_0(\bar\delta)=\delta,\quad
J_0(\bar\theta)=\theta,\quad J_0(\bar\alpha)=\alpha,
\quad J_0(\mathbb Q_{\bar\theta})=\mathbb Q_\theta.
\]
The reflected instances of $\Xi_N$ give $\bar\theta\in C^{(r)}$ and show that $\bar\theta$ is supercompact in Woodin's sense.

Set $J=J_0\restr V_{\bar\theta+\omega}$.  Lemma~\ref{lem:restriction-elementarity} shows that $J:V_{\bar\theta+\omega}\longrightarrow V_{\theta+\omega}$ is elementary.

The map $J$ satisfies the hypotheses of Theorem~\ref{thm:Woodin-forcing}(3), with $J(\bar\delta)=\delta>\bar\theta$ and $J(\mathbb Q_{\bar\theta})=\mathbb Q_\theta$.  Hence it lifts to $J^*:M_{\bar\theta+1}\longrightarrow M_{\theta+1}$.
Lemma~\ref{lem:restriction-elementarity} shows that $e=J^*\restr M_{\bar\theta}$ is elementary from $M_{\bar\theta}$ to $M_\theta$.  It has critical point $\bar\delta$, maps $\bar\delta$ to $\delta$ and $\bar\alpha$ to $\alpha$, and belongs to $M$ because its graph has rank below $\theta+\omega<\Lambda$.  The ranks $\bar\theta$, $\theta$, and $\Lambda$ are marked and belong to $C^{(r)}$.

Lemma~\ref{lem:finite-windows}, applied twice, gives
\[
M_{\bar\theta}\prec_{\Sigma_{N+1}}M
\quad\text{and}\quad
M_\theta\prec_{\Sigma_{N+1}}M.
\]

Consequently $\bar\theta,\theta\in(C^{(n+1)})^M$ for every $n\leq N$.  As in the proof of Lemma~\ref{lem:rank-Berkeley-descent}, $M_{\bar\theta}$ regards the critical point $\bar\delta$ as a cardinal, so $M_\theta$ regards $\delta=e(\bar\delta)$ as a cardinal.  Any contrary witness in $M$ already belongs to $M_\theta$, so $\delta$ is a cardinal in $M$.

For the fixed $\delta$, the ordinal $\theta$ can be chosen arbitrarily high below $\Lambda$, and $\alpha<\theta$ was arbitrary, so the characterization shows that $\delta$ is $C^{(n)}$-extendible for every $n\leq N$.  The cardinal $\delta$ was arbitrary above $\eta$, and $\eta$ was arbitrary above $\mu$; hence these cardinals are unbounded below $\Lambda=\Ord^M$, so each class is proper in $M$.
\end{proof}

\begingroup

\begin{lemma}\label{lem:finite-language-reduction}
For every finite set $F_0$ of instances of the arbitrary-language Vop\v{e}nka scheme, there is an effectively determined $N\geq1$ such that
\[
\ZFC+\VP(\Pi_{N+1})\vdash F_0.
\]
\end{lemma}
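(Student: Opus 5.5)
Each instance in $F_0$ is determined by a formula $\varphi(x,y)$, and it asserts: for every parameter $a$ and every set-sized language $L$, if $\{M:\varphi(M,a)\}$ is a proper class of $L$-structures, then it contains distinct $M,N$ with an elementary embedding $M\to N$. The plan is to read off the L\'evy complexity of each $\varphi$, let $N$ exceed all of them, and show that $\ZFC+\VP(\Pi_{N+1})$ proves each instance. The instance for $\varphi$ should follow from the $\VP(\Pi_{N+1})$ instance for a modified formula.

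To carry this out, fix $e\in F_0$ with formula $\varphi$, and let $m$ be the L\'evy complexity of $\varphi$ (and of its negation) after prenex normal form, computed by the primitive-recursive function $\ell$. Set $N=\max_e m_e+1$, which is clearly effective in $F_0$. Working in $\ZFC+\VP(\Pi_{N+1})$, suppose $\mathcal A=\{M:\varphi(M,a)\}$ is a proper class of $L$-structures. If $\varphi$ were already $\Pi_{N+1}$, with set parameters $a$ and $L$ permitted, we would be done directly, since a $\Sigma_m$ or $\Pi_m$ formula with $m\leq N$ is $\Pi_{N+1}$ up to dummy quantifiers. Two points still need checking.

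\begin{enumerate}
\item The clause ``$M$ is an $L$-structure'' must be absorbed. It is $\Delta_1$ in $M$ and $L$, so conjoining it keeps the complexity within $\Pi_{N+1}$.
\item The arbitrary-language formulation must be matched. $\VP(\Pi_{N+1})$ as used in the paper allows set parameters and arbitrary set-sized languages, so $L$ can be taken as a parameter.
\end{enumerate}

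If instead the intended $\VP(\Pi_n)$ is restricted, for instance to a fixed language or to lightface definitions, the fallback is the standard ZFC coding. First, absorb $a$ and $L$ into the structures: replace each $M$ by $\langle M,a,L\rangle$, interpreted as an expanded structure in a language with constants for $\tc(\{a,L\})$. This is legitimate because an elementary embedding of the expansions restricts to an $L$-elementary embedding of the reducts. Second, use $\AC$ to translate $L$-structures into structures in a single binary-relation language, where elementary embeddings correspond.

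I expect the main obstacle to be the bookkeeping of complexity. Under $\ZFC$, Bagaria's analysis shows that $\VP(\Pi_{n+1})$ yields $C^{(n)}$-extendible cardinals. An alternative route is therefore the argument of Lemma~\ref{lem:OH-VP}, which requires ranks correct enough for $\varphi$, the structure coding, and satisfaction. That requirement is exactly why $N$ is taken to exceed the complexity of each $\varphi$, with a margin for the $\Delta_1$ clauses. I would present the direct route and cite Bagaria's theorem as the backup.
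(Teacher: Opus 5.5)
Your direct route is correct for the literal reading of $\VP(\Pi_{N+1})$ in the Introduction (set parameters and arbitrary set-sized languages). Under that reading, however, the lemma is immediate. Each instance in $F_0$ is, after prenex form and dummy quantifiers, already an instance of $\VP(\Pi_{N+1})$, and $\AC$ plays no role.

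The paper's proof, and the use of the lemma in Theorem~\ref{thm:consistency-transfer}, show that $\VP(\Pi_{N+1})$ here means the scheme with set parameters for one \emph{fixed finite relational language}. That is what Bagaria's characterization yields from a proper class of $C^{(N)}$-extendible cardinals. Under that reading your point (2) fails, so your fallback has to be the proof. The fallback is essentially the paper's argument: well-order $L$, keep $a$, $L$ and the well-order as parameters of the coded class, and note that decoding is $\Delta_1$ in them, so $N=\max_e m_e+1$ is ample.

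Two repairs are needed.
\begin{enumerate}
\item Drop the constants for $\tc(\{a,L\})$. The parameters simply stay in the definition. The constants would make the language depend on $a$ and $L$ and typically be infinite, and they need not denote elements of $|M|$. The step would also not remove parameters if a lightface scheme were meant; there, by Bagaria, the scheme at a fixed level is weaker and $N$ would have to be raised.
\item Your claim that ``elementary embeddings correspond'' is the whole content of the coding, and it is not automatic. An embedding between codes in a fixed finite language may move the indices of the symbols of $L$. The paper cites Brooke-Taylor for this. One direct argument makes $\lambda$, the order type of the well-order, and the re-indexed copy of $M$ distinguished in $\langle V_\alpha,\in,\dots\rangle$, with $\alpha$ a limit above $\lambda+2$ and $\rank(M)$. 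An embedding of codes then fixes $\lambda$. By Lemma~\ref{lem:restriction-elementarity} it restricts to an elementary map $V_{\lambda+2}\to V_{\lambda+2}$, which is the identity by Kunen's theorem. This is where Choice is really used.
\end{enumerate}

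Your Bagaria-based alternative is a genuinely different and sound route. The direction of Bagaria's theorem from the scheme to large cardinals turns the fixed-language $\VP(\Pi_{N+1})$, with parameters, into a proper class of $C^{(N)}$-extendible cardinals. The argument of Lemma~\ref{lem:OH-VP} then yields every arbitrary-language instance whose formula is $\Sigma_m$ or $\Pi_m$ with $m\leq N$. The critical point can be taken above $\rank(\tc(\{a,L\}))$, so $L$ is fixed pointwise and no coding is needed. This route trades the coding subtlety for a deep theorem. Moreover, in Theorem~\ref{thm:consistency-transfer} the model $W$ already has a proper class of $C^{(N)}$-extendible cardinals by Theorem~\ref{thm:finite-restoration}. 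So there this route would bypass both Bagaria's characterization and the present lemma.
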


\begin{proof}
Working in $\ZFC$, well-order each of the finitely many set-sized languages and apply the standard coding of structures in an arbitrary set-sized language by structures in one fixed finite relational language; the coding preserves and reflects elementary embeddings \cite[pp.~2--3]{BrookeTaylor}.  The well-order of the language and all parameters in the original instance may be retained as parameters in the definition of the coded class.  For each fixed instance, the definitions of the coding and decoding have some finite L\'evy complexity.  Taking the maximum of these finitely many bounds gives $N$.
\end{proof}
\endgroup

\begin{theorem}\label{thm:consistency-transfer}
\[
\Con(\ZF+\VP)\quad\text{if and only if}\quad\Con(\ZFC+\VP).
\]
\end{theorem}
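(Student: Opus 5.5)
The backward implication is immediate, since $\ZFC+\VP$ extends $\ZF+\VP$.  For the forward implication I will use compactness: assuming $\Con(\ZF+\VP)$, it suffices to show that every finite $F\subseteq\ZFC+\VP$ has a model.

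First I apply Proposition~\ref{prop:OH-pruning}.  This gives a model $V$ of
\[
\ZF+\VP+\UE+\text{``there are no rank-Berkeley cardinals''}.
\]
There are two cases.  If $V\models\AC$, then $V$ is already a model of $\ZFC+\VP$, and every $F$ is satisfied.  So assume $V\models\neg\AC$, and let $\mu$ be least such that $\DC_\mu$ fails.

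Now fix a finite $F$ and write $F=F_1\cup F_0$.  Here $F_1$ consists of $\ZFC$ axioms and $F_0$ consists of instances of the arbitrary-language Vop\v{e}nka scheme.  By Lemma~\ref{lem:finite-language-reduction} there is an $N$ with $\ZFC+\VP(\Pi_{N+1})\vdash F_0$.  By enlarging $N$ if necessary, I may assume this derivation uses only finitely many $\ZFC$ axioms and finitely many $\VP(\Pi_{N+1})$ instances.  I then reduce those instances to $C^{(n)}$-extendibility as follows.

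The argument of Lemma~\ref{lem:OH-VP} works over $\ZFC$ and uses only $C^{(n)}$-extendibles for a single $n$ large enough that $C^{(n)}$-correct ranks decide the $\Pi_{N+1}$ defining formula, the coding of structures, and satisfaction.  Hence there is an effectively determined $N'\geq N$ such that ``for every $1\leq n\leq N'$ there is a proper class of $C^{(n)}$-extendible cardinals'' implies, over $\ZFC$, each of the finitely many needed $\VP(\Pi_{N+1})$ instances.

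Next I apply Theorem~\ref{thm:finite-restoration} inside $V$ with $N'$ in place of $N$.  Since $V\models\UE$, there is an $E_{t_{N'}}$-cardinal $\Lambda>\mu$.  Forcing with Woodin's $\mathbb Q_\Lambda$ then yields $M=(V[G])_\Lambda$, which satisfies $\ZFC$ together with proper classes of $C^{(n)}$-extendibles for all $n\leq N'$.  By the previous paragraph $M\models F$.

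The remaining formal point is that $V[G]$ must exist.  I handle this either by working with a countable model, or by observing that the statement ``$\Vdash_{\mathbb Q_\Lambda}$ the rank-$\Lambda$ model satisfies $F$'' is a theorem-level fact about the forcing relation, so $\Con$ of the theory $F$ follows.  Compactness then gives $\Con(\ZFC+\VP)$.

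The main obstacle is the reduction from $\VP(\Pi_{N+1})$ instances to finitely many levels of extendibility.  Lemma~\ref{lem:OH-VP} is stated for the full $\UE$ over $\ZF$, so I must check that its proof is local.  Concretely, I need that a single $C^{(n)}$-extendible $\kappa$ above the parameters, together with the correctness of $C^{(n)}$ for the specific formula, suffices for each instance, and that $n$ depends effectively on the formula's complexity.  I also need to confirm that ``proper class'' inside $M$ (unbounded below $\Lambda$) is exactly what Lemma~\ref{lem:OH-VP}'s argument uses when it is run inside $M$.
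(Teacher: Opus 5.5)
Your proof is correct and follows the paper's route: Proposition~\ref{prop:OH-pruning}, the case split on $\AC$, Theorem~\ref{thm:finite-restoration} applied over a countable model, and compactness. The one difference is the final step, where the paper uses Bagaria's characterization (over $\ZFC$, unboundedly many $C^{(N)}$-extendibles give $\VP(\Pi_{N+1})$), whereas you rerun the local argument of Lemma~\ref{lem:OH-VP} at a larger level $N'$. That costs nothing, since Theorem~\ref{thm:finite-restoration} works for every $N'$. Moreover, that argument already handles arbitrary set-sized languages over $\ZF$, so you could skip Lemma~\ref{lem:finite-language-reduction} and apply it directly to the instances in $F_0$.
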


\begin{proof}
By Proposition~\ref{prop:OH-pruning}, take a countable model
\[
\mathfrak M\models\ZF+\VP+\UE+``\text{there are no rank-Berkeley cardinals}''.
\]
Let $F$ be a finite subset of the first-order theory $\ZFC+\VP$.  If $\mathfrak M\models\AC$, then $\mathfrak M\models F$.

Now assume that $\mathfrak M\models\neg\AC$.  Choose $N\geq1$ for the finitely many instances of $\VP$ occurring in $F$, as in Lemma~\ref{lem:finite-language-reduction}.  Choose $t_N$ as in Theorem~\ref{thm:finite-restoration} and choose in $\mathfrak M$ a sufficiently large $E_{t_N}$-cardinal $\Lambda$.  Let $H\subseteq(\mathbb Q_\Lambda)^{\mathfrak M}$ be $\mathfrak M$-generic and put $W=(\mathfrak M[H])_\Lambda$.  By Theorem~\ref{thm:finite-restoration}, $W\models\ZFC$ and $W$ has a proper class of $C^{(N)}$-extendible cardinals.  {Bagaria's characterization \cite[Theorem~4.3(2), Corollary~4.7, Theorem~4.11, and the paragraph following Theorem~4.12]{BagariaCn} therefore gives the $\VP(\Pi_{N+1})$ scheme in $W$ for classes of structures in a fixed finite relational language.  By Lemma~\ref{lem:finite-language-reduction}, $W$ satisfies every instance of $\VP$ occurring in $F$; since $W\models\ZFC$, it follows that $W\models F$.}

Every finite subset of $\ZFC+\VP$ is therefore satisfiable.  By first-order compactness, $\ZFC+\VP$ has a model.  The converse implication follows by forgetting Choice.
\end{proof}

This proves Theorem~B.

\begin{remark}\label{rem:consistency-not-uniform}

For a finite set of axioms $F$, the rank extension used in the proof may depend on $F$.  Compactness therefore proves equiconsistency, but does not give a class-generic $\ZFC+\VP$ extension of the original model.  Question~\ref{q:strong-restoration} asks whether every countable model admits one.
\end{remark}
\endgroup

\section{{Independence of Choice principles}}

Theorem~\ref{thm:preservation} applies to the classical symmetric constructions in this section and the next two; Theorem~\ref{thm:consistency-transfer} gives a ground model of $\ZFC+\VP$ when one is required.

\begin{corollary}\label{cor:choice}
\[
\Con(\ZF+\VP)\quad\text{implies}\quad
\Con(\ZF+\VP+\neg\AC+\neg\DC).
\]
Consequently $\AC$ is independent of $\ZF+\VP$, relative to $\Con(\ZF+\VP)$.
\end{corollary}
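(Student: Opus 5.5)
The plan is to apply Theorem~\ref{thm:consistency-transfer} and then Theorem~\ref{thm:preservation} to the basic Cohen symmetric system. Since this model is already known to violate $\AC$, and I will argue it also violates $\DC$, one construction should give both $\neg\AC$ and $\neg\DC$.

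First, assume $\Con(\ZF+\VP)$. By Theorem~\ref{thm:consistency-transfer} we obtain a model $V\models\ZFC+\VP$, and by passing to a countable model we may take generic filters to exist. In $V$, let $\mathbb P$ be the forcing adding $\omega$ Cohen reals, with finite conditions $p:\omega\times\omega\rightharpoonup 2$. Let $\Gamma$ be the group of automorphisms induced by finitary permutations of the first coordinate, and let $\mathcal F$ be the normal filter generated by the pointwise stabilizers of finite sets $e\subseteq\omega$. Then $\mathscr S=\langle\mathbb P,\Gamma,\mathcal F\rangle$ is a set-sized symmetric system in $V$. For a $V$-generic $G$, Theorem~\ref{thm:preservation} gives $W=\HS^G\models\ZF+\VP$.

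Second, I will verify the standard facts about $W$. The set $A=\{c_n:n<\omega\}$ of Cohen reals lies in $W$ via the name $\dot A=\{\dot c_n:n<\omega\}\times\{1\}$, which is fixed by all of $\Gamma$. I will show that $A$ is infinite but has no countably infinite subset in $W$, i.e.\ $A$ is infinite Dedekind-finite. The argument is the usual support argument. Suppose $p\forcesS$ ``$\dot f:\check\omega\to\dot A$ is injective'', where $\dot f$ has support $e$. Pick $n$ and $q\leq p$ with $q\forcesS\dot f(\check n)=\dot c_m$ for some $m\notin e$. Choose a finitary permutation $\pi$ fixing $e$ pointwise, moving $m$ to some $m'$ outside $e$ and outside $\operatorname{dom}(q)$, and such that $\pi q$ is compatible with $q$. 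Then the common extension forces $\dot f(\check n)=\dot c_m$ and $\dot f(\check n)=\dot c_{m'}$, a contradiction.

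An infinite Dedekind-finite set refutes countable choice, hence refutes $\DC$ (and $\AC$). Indeed, $\DC$ implies that every infinite set has a countably infinite subset: build finite injective sequences from $A$ by dependent choice along the relation ``extends by one new element''. Thus $W\models\ZF+\VP+\neg\AC+\neg\DC$, which establishes the displayed implication. The independence clause then follows from $W\models\neg\AC$ together with Theorem~\ref{thm:consistency-transfer}, which gives $\Con(\ZFC+\VP)$ from $\Con(\ZF+\VP)$.

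The only step needing care is the Dedekind-finiteness argument, specifically choosing $\pi$ so that $\pi q$ and $q$ are compatible while $\pi$ still fixes $\sym(\dot f)$ and $\dot A$. I would handle this with the standard trick: take $m'$ outside the finite set of first coordinates used by $q$ and outside $e$, and let $\pi$ be the transposition $(m\ m')$. This is classical (Jech, Chapter~5), so citing it should suffice.
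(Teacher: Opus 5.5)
Your proof is correct and follows the paper's approach. Both use the basic Cohen symmetric extension, Theorem~\ref{thm:preservation} to preserve $\VP$, the infinite Dedekind-finite set of Cohen reals to get $\neg\AC+\neg\DC$, and Theorem~\ref{thm:consistency-transfer} for the $\AC$ half of independence. The one difference is that the paper starts directly from a countable model of $\ZF+\VP$, because neither Theorem~\ref{thm:preservation} nor the support argument needs Choice in the ground. Your preliminary appeal to Theorem~\ref{thm:consistency-transfer} is therefore harmless but unnecessary, and it makes the first implication depend on the much heavier Theorem~B.
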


\begin{proof}
Let $\mathfrak M$ be a countable model of $\ZF+\VP$, and form its basic Cohen symmetric extension.  The canonical set $A=\{x_n:n<\omega\}$ of Cohen reals is infinite and Dedekind-finite \cite[Section~5.3]{Jech}.  Hence $\AC$ and $\DC$ fail.  Theorem~\ref{thm:preservation} preserves $\VP$, while Theorem~\ref{thm:consistency-transfer} gives the opposite consistency implication for $\AC$.
\end{proof}

\begingroup

\begin{lemma}\label{lem:omega1-Cohen}
Assume $\ZFC+\VP$.  There is a set-sized symmetric extension satisfying
\[
\ZF+\VP+\DC+\neg\AC.
\]
\end{lemma}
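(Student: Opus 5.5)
The plan is to use the classical $\omega_1$-Cohen symmetric model (Jech's model for $\DC+\neg\AC$), adding $\omega_1$ Cohen reals, and then to quote Theorem~\ref{thm:preservation} for $\VP$. Work in $V\models\ZFC+\VP$. Let $\mathbb P$ be the finite-support product $\Add(\omega,\omega_1)$, whose conditions are finite partial functions $\omega_1\times\omega\to 2$. Let $\Gamma$ be the group of finite-support permutations of $\omega_1$ together with finite flips of bits in each coordinate, acting on $\mathbb P$ in the obvious way. Let $\mathcal F$ be the normal filter generated by the pointwise stabilizers $\operatorname{fix}(E)$ for countable $E\subseteq\omega_1$. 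A routine conjugation check shows that $\mathcal F$ is normal. Since $\mathscr S=\langle\mathbb P,\Gamma,\mathcal F\rangle$ is set-sized, Theorem~\ref{thm:preservation} immediately gives $W=\HS^G\models\ZF+\VP$. The only remaining work is to verify $\DC$ and $\neg\AC$ in $W$.

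For $\neg\AC$, take $A=\{\dot x_\xi^G:\xi<\omega_1\}$, the set of generic reals. Each $\dot x_\xi$ is fixed by $\operatorname{fix}(\{\xi\})$. The set $A$ is fixed by all of $\Gamma$, after we close the name under flips. Suppose a name $\dot f\in\HS$ for an injection $A\to\Ord$ were supported by a countable $E$. Pick $\xi\notin E$ and a condition forcing a value of $\dot f(\dot x_\xi)$. A permutation fixing $E$ moves $\xi$ to some fresh $\xi'$ outside $E$ and outside the support of the condition. This yields a compatible condition forcing that $\dot f(\dot x_{\xi'})$ takes the same value, which contradicts injectivity. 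This is the standard argument from \cite[Chapter~5]{Jech}; since I use it only in set-sized form, no appeal to Choice in $W$ is needed.

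For $\DC$, the key step is $\omega$-closure of $W$ in $V[G]$ in the appropriate sense. Every countable sequence of elements of $W$ that lies in $V[G]$ already lies in $W$. To show this, take a sequence $\langle\sigma_n^G:n<\omega\rangle$. The $\sigma_n$ can be chosen in $V$ via a ccc argument: $\mathbb P$ is ccc, so a name for the sequence reduces to countably many antichains, each with countably many hereditarily symmetric names. Use $\AC$ in $V$ to pick these names, with countable supports $E_n$. Then $E=\bigcup_n E_n$ is countable, so $\operatorname{fix}(E)\in\mathcal F$ fixes a suitably symmetrized sequence name. Now $\DC$ in $W$ follows: given a relation $R\in W$ without terminal nodes, $V[G]\models\ZFC$ produces an $\omega$-chain, and by closure this chain lies in $W$. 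I expect this step to be the main obstacle. Specifically, in forming the symmetrized name one must check that $\operatorname{fix}(E)$ really fixes it, which requires replacing the chosen names by $\operatorname{fix}(E)$-invariant versions, for instance by using canonical names relative to the restriction to $E$. I would cite Jech's treatment of this model rather than redo it.
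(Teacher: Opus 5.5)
Your argument is correct, but it takes a different route from the paper. The paper adds $\omega_1$ Cohen subsets of $\omega_1$ with countable conditions, so its forcing is $\sigma$-closed, and it gets $\DC$ in one line from Karagila's preservation lemma for a $\sigma$-closed forcing with a $\sigma$-complete filter. You instead use the ccc forcing $\operatorname{Add}(\omega,\omega_1)$ and prove the stronger closure property ${}^{\omega}W\cap V[G]\subseteq W$ by hand.

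The step you flag as the main obstacle is easier than you expect, and no replacement of names is needed. Fix $p\in G$ and $\dot s$ such that $p$ forces $\dot s$ to be an $\omega$-sequence of values of names from some set $X\subseteq\HS$ in $V$; a rank bound gives such an $X$. Choose countable maximal antichains $A_n$ below $p$, and names $\sigma_{n,q}\in X$ with $q\Vdash\dot s(\check n)=\sigma_{n,q}$ for each $q\in A_n$. Let $E$ be the union of two kinds of sets: countable sets $E_{n,q}$ with $\operatorname{fix}(E_{n,q})\subseteq\sym(\sigma_{n,q})$, and the finite supports of all $q\in\bigcup_nA_n$. Every $\pi\in\operatorname{fix}(E)$ then fixes each $q$ and each $\sigma_{n,q}$. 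Hence $\{\langle\langle\check n,\sigma_{n,q}\rangle^\bullet,q\rangle:n<\omega,\ q\in A_n\}$ is literally $\operatorname{fix}(E)$-invariant, lies in $\HS$, and evaluates to the given sequence.

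Your version is self-contained. It uses only the ccc, countable completeness of $\mathcal F$, and the fact that every condition has its stabilizer in $\mathcal F$. It also yields a model in which the reals themselves are not well-orderable. The flips are unnecessary; they only oblige you to define $\operatorname{fix}(E)$ so that it performs no flips on coordinates in $E$. The paper's $\sigma$-closed forcing adds no reals, so in its model the reals stay well-orderable and the non-well-orderable set consists of subsets of $\omega_1$. In exchange, a single citation replaces your closure argument.
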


\begin{proof}
Let $\mathbb P_1$ be the forcing of countable partial functions from $\omega_1\times\omega_1$ to $2$, ordered by reverse inclusion.  Let the full permutation group of $\omega_1$ act on the first coordinate, and let the normal filter be generated by pointwise stabilizers of countable subsets of $\omega_1$.  The forcing is $\sigma$-closed and the filter is $\sigma$-complete, so the symmetric extension satisfies $\DC$ by Karagila's preservation theorem \cite[Lemma~3.1]{KaragilaDC}.  Theorem~\ref{thm:preservation} gives $\ZF+\VP$.

For $\xi<\omega_1$, let $a_\xi$ be the $\xi$th generic subset of $\omega_1$ and put $A=\{a_\xi:\xi<\omega_1\}$.  The name for $A$ is invariant, while the name for $a_\xi$ has support $\{\xi\}$, so $A$ belongs to the symmetric extension.  Suppose that $p$ forces a hereditarily symmetric name $\dot f$, supported by a countable $E\subseteq\omega_1$, to enumerate $A$.  Enlarge $E$ to contain the first-coordinate support of $p$.  Choose $\xi\notin E$, $q\leq p$, and an ordinal $\gamma$ such that
\[
q\forcesS\dot f(\check\gamma)=\dot a_\xi.
\]
Choose $\zeta$ outside $E$ and outside the countable set of first coordinates occurring in $q$, and let $\pi$ transpose $\xi$ and $\zeta$.  Then $\pi$ fixes $\dot f$ and $p$, and
\[
\pi q\forcesS\dot f(\check\gamma)=\dot a_\zeta.
\]
Since $q$ and $\pi q$ are compatible, choose a common extension.  It forces $\dot a_\xi\neq\dot a_\zeta$ while forcing both names to equal $\dot f(\check\gamma)$, a contradiction.  Thus $A$ is not well-orderable, and $\AC$ fails.
\end{proof}

\begin{theorem}\label{thm:DC-notAC}
\[
\Con(\ZF+\VP)\quad\text{implies}\quad
\Con(\ZF+\VP+\DC+\neg\AC).
\]
\end{theorem}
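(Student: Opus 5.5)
The plan is to combine Theorem~\ref{thm:consistency-transfer} with Lemma~\ref{lem:omega1-Cohen}.

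First, assume $\Con(\ZF+\VP)$. Theorem~\ref{thm:consistency-transfer} then gives $\Con(\ZFC+\VP)$, so we may fix a countable model $\mathfrak M\models\ZFC+\VP$. Countability is available by the downward L\"owenheim--Skolem theorem applied to the first-order theory.

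Next, inside $\mathfrak M$ we form the symmetric system of Lemma~\ref{lem:omega1-Cohen}: countable partial functions $\omega_1\times\omega_1\to2$, the permutation group of $\omega_1$ acting on first coordinates, and the filter generated by pointwise stabilizers of countable sets. Because $\mathfrak M$ is countable, an $\mathfrak M$-generic filter $G$ exists. By Lemma~\ref{lem:omega1-Cohen}, interpreted in $\mathfrak M$, the symmetric model $\HS^G$ satisfies $\ZF+\VP+\DC+\neg\AC$. Two ingredients carry this:
\begin{itemize}
\item Theorem~\ref{thm:preservation}, applied with $V=\mathfrak M$, which preserves $\VP$;
\item Karagila's theorem, which transfers $\DC$ using $\sigma$-closure and $\sigma$-completeness of the filter.
\end{itemize}
This symmetric model witnesses the consistency of $\ZF+\VP+\DC+\neg\AC$.

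The one point to watch is formalizing the argument syntactically rather than through countable transitive models. $\mathfrak M$ need not be well-founded, but generic extensions over arbitrary countable models work fine. Alternatively, one can run the whole argument via Boolean-valued symmetric models: the statement ``$\mathbb 1$ forces $\HS\models\ZF+\VP+\DC+\neg\AC$'' is provable in $\ZFC+\VP$ schema by schema, since each instance of $\VP$ in $W$ uses finitely many ground instances, as in the proof of Theorem~\ref{thm:preservation}. Either route yields the implication, and I expect no genuine obstacle beyond this bookkeeping.
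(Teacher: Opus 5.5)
Your argument is correct. Theorem~\ref{thm:consistency-transfer} gives a countable model of $\ZFC+\VP$, and Lemma~\ref{lem:omega1-Cohen} applied there gives the required symmetric model.

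The paper's proof takes a different route and never uses Theorem~B. It starts from an arbitrary countable $\mathfrak M\models\ZF+\VP$ and splits into three cases:
\begin{enumerate}
\item If $\mathfrak M\models\AC$, it applies Lemma~\ref{lem:omega1-Cohen} directly.
\item If $\mathfrak M\models\neg\AC+\neg\DC$, it takes a singular weakly LS cardinal $\kappa$ from Corollary~\ref{cor:singular-LS} and forces with Usuba's $\Col(V_\kappa)$ to obtain $\DC$. The trivial-symmetry case of Theorem~\ref{thm:preservation} preserves $\VP$ in that extension. It applies Lemma~\ref{lem:omega1-Cohen} once more only if Choice was restored.
\item Otherwise $\mathfrak M$ already satisfies $\DC+\neg\AC$.
\end{enumerate}

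Your route is shorter, but it inherits every dependency of Theorem~B. These are the rank-Berkeley pruning via Mohammd's results and G\"odel's theorem, Woodin's local Choice restoration, the Bagaria--Poveda characterization, and compactness. Because it passes through compactness, the model you end with is unrelated to the model you started from.

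The paper's route needs only Theorem~A, Section~\ref{sec:LS}, and Usuba's proposition. It obtains the $\DC+\neg\AC$ model from the given countable model by at most two forcing or symmetric steps. So it shows more than consistency: every countable model of $\ZF+\VP$ either satisfies $\DC+\neg\AC$ or has such an extension satisfying $\ZF+\VP+\DC+\neg\AC$.
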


\begin{proof}

Let $\mathfrak M\models\ZF+\VP$ be countable.  If $\mathfrak M\models\AC$, apply Lemma~\ref{lem:omega1-Cohen}.  If $\mathfrak M\models\neg\AC+\neg\DC$, choose a singular weakly LS cardinal $\kappa$ by Corollary~\ref{cor:singular-LS}.  Usuba's forcing $\Col(V_\kappa)$ forces $\DC$ \cite[Proposition~3.6]{Usuba}, while Theorem~\ref{thm:preservation} preserves $\VP$; if the extension satisfies $\AC$, apply Lemma~\ref{lem:omega1-Cohen} once more.  The remaining case already satisfies $\DC+\neg\AC$.
\end{proof}

\begin{corollary}\label{cor:DC-independent}
Relative to $\Con(\ZF+\VP)$, the axiom $\DC$ is independent of
\[
\ZF+\VP+\neg\AC.
\]
\end{corollary}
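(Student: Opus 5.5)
The corollary asserts that $\DC$ is independent of $\ZF+\VP+\neg\AC$, relative to $\Con(\ZF+\VP)$. I will prove the two consistency statements separately, assuming $\Con(\ZF+\VP)$ throughout.

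\emph{Consistency of $\DC$.} Theorem~\ref{thm:DC-notAC} already yields
\[
\Con(\ZF+\VP+\DC+\neg\AC)
\]
from $\Con(\ZF+\VP)$. Its proof splits into cases according to whether a countable model $\mathfrak M\models\ZF+\VP$ satisfies $\AC$. If it does, Lemma~\ref{lem:omega1-Cohen} applies. If it does not, we first force $\DC$ with Usuba's collapse at a singular weakly LS cardinal, given by Corollary~\ref{cor:singular-LS}, and then apply Lemma~\ref{lem:omega1-Cohen} again if Choice has reappeared. I will simply cite Theorem~\ref{thm:DC-notAC} for this half.

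\emph{Consistency of $\neg\DC$.} Corollary~\ref{cor:choice} gives
\[
\Con(\ZF+\VP+\neg\AC+\neg\DC).
\]
The argument there takes the basic Cohen symmetric extension of a countable model of $\ZF+\VP$. In that extension the set of Cohen reals is infinite and Dedekind-finite, which refutes $\DC$ (indeed it refutes countable choice for this set). Theorem~\ref{thm:preservation} then guarantees that $\VP$ survives the passage to the symmetric extension.

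\emph{Conclusion.} Combining the two halves, neither $\DC$ nor $\neg\DC$ is provable from $\ZF+\VP+\neg\AC$, provided $\ZF+\VP$ is consistent. This is exactly the claimed independence.

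I expect no real obstacle, since both directions are immediate citations of results already proved. The only point that needs checking is that each cited model really satisfies $\neg\AC$. For the $\neg\DC$ model this holds because $\neg\DC$ implies $\neg\AC$. For the $\DC$ model, $\neg\AC$ is part of the explicit conclusion of Theorem~\ref{thm:DC-notAC}.
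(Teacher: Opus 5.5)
Your proposal is correct and matches the paper's proof: the $\neg\DC$ model (which already satisfies $\neg\AC$) comes from Corollary~\ref{cor:choice}, and the $\DC+\neg\AC$ model comes from Theorem~\ref{thm:DC-notAC}. Both rest on the single hypothesis $\Con(\ZF+\VP)$, which gives the stated independence.
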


\begin{proof}
Corollary~\ref{cor:choice} gives a model with $\neg\DC$, and Theorem~\ref{thm:DC-notAC} gives one with $\DC$, from the same consistency hypothesis.
\end{proof}

Let $\RR$ denote the rigid relation principle: every set carries a binary relation with no nonidentity automorphism.

\begin{corollary}\label{cor:RR}
The following consistency implications hold:
\[
\begin{aligned}
\Con(\ZF+\VP)&\quad\text{implies}\quad\Con(\ZF+\VP+\neg\RR),\\
\Con(\ZF+\VP)&\quad\text{implies}\quad
\Con(\ZF+\VP+\RR+\neg\AC+\neg\DC).
\end{aligned}
\]
\end{corollary}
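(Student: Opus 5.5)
Both implications come from applying Theorem~\ref{thm:preservation} to suitable symmetric systems over a model of $\ZF+\VP$. By Theorem~\ref{thm:consistency-transfer} we may take that ground model to satisfy $\ZFC+\VP$. In each case the only work is to check that the symmetric model has the desired status of $\RR$; $\VP$ and $\ZF$ are automatic.

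For the first implication, start from a countable $\mathfrak M\models\ZFC+\VP$ and take the basic Cohen symmetric extension from Corollary~\ref{cor:choice}. I will argue that $\RR$ fails on the set $A=\{x_n:n<\omega\}$ of Cohen reals. Suppose a hereditarily symmetric name $\dot R$ for a binary relation on $A$ is forced by $p$ to be rigid, and let $\dot R$ have finite support $E$. Enlarge $E$ so that it contains the coordinates mentioned in $p$. Pick $m,n\notin E$ with $m\ne n$, and let $\pi$ be the transposition swapping the $m$th and $n$th Cohen reals. Then $\pi$ fixes $p$ and $\dot R$, so $\pi$ induces a name for a map of $A$ swapping $x_m$ and $x_n$ and fixing all other elements. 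It remains to show that this map preserves $\dot R$. Given $q\le p$ forcing $\dot x_i\mathrel{\dot R}\dot x_k$, apply $\pi$: the condition $\pi q$ forces $\dot x_{\pi(i)}\mathrel{\dot R}\dot x_{\pi(k)}$. Then $q$ and $\pi q$ may be incompatible, so the standard fix is needed. Rather than transpose, I will use a permutation that moves $m,n$ to fresh coordinates outside the support of $q$ and agrees with $q$ there. Alternatively, I will argue directly through homogeneity: the value of $x_i\mathrel{R}x_k$ for $i,k\notin E$ depends only on the isomorphism type of the pair over $E$, and swapping $x_m$ with $x_n$ preserves these types. This is the main obstacle. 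Genericity arguments of this kind are classical (Cohen's model fails $\RR$, as in the literature on rigid relations), and I will cite or adapt that argument. Either way, $\RR$ fails, and $\VP$ holds by Theorem~\ref{thm:preservation}.

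For the second implication, I will need a symmetric model satisfying $\RR+\neg\AC+\neg\DC$. The natural candidate is a model in which every set is the image of a well-ordered set times a rigid structure. A known choice is a symmetric extension adding a countable set of Cohen reals with no permutations available beyond a rigid structure on the index set: for example, the model obtained by adding Cohen subsets indexed by a rigid tree or by $\mathbb Q$-like order with filter from supports closed under the order. Dedekind-finiteness or a failure of countable choice gives $\neg\AC$ and $\neg\DC$, while the canonical ordered or rigid structure transfers to give $\RR$. I would check $\RR$ by showing that every set in the model injects into $\mathcal P(\mathrm{Ord}\times A)$ for a set $A$ carrying a definable rigid relation. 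This is because $\RR$ is inherited by sets that inject into a set with a rigid relation, and it holds for products with ordinals. Then Theorem~\ref{thm:preservation} again supplies $\VP$. The principal difficulty is identifying and verifying a concrete symmetric system with these three properties; I expect to borrow it from the existing $\ZF$ literature on $\RR$ and merely check that it is set-sized, so that Theorem~\ref{thm:preservation} applies.
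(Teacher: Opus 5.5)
There is a genuine gap: the two halves of your plan use the models the wrong way round.

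\textbf{First implication.} You claim $\RR$ fails on the set $A$ of Cohen reals in the basic Cohen model. This is false. Hamkins and Palumbo show that $\RR$ \emph{holds} in the basic Cohen model \cite[Theorems~5 and~6]{HamkinsPalumbo}; indeed $\ZF$ alone proves that every set of reals carries a rigid relation.

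The faulty step is the passage from ``$\pi$ fixes $p$ and $\dot R$'' to ``the swap of $x_m$ and $x_n$ is an automorphism of $\dot R^G$''. The map $\pi$ also moves the generic. From $q\in G$ forcing $\dot x_i\mathrel{\dot R}\dot x_k$ you only learn that $\pi q$ forces the swapped instance, and $\pi q$ need not lie in $G$. Neither proposed repair helps. Relocating $m,n$ to fresh coordinates changes which reals are swapped without making any swap an automorphism. The ``homogeneity'' claim conflates two different things: conditions supported on $E$ do not decide $x_m\mathrel{R}x_n$, but that does not mean the swap preserves $R$.

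A concrete counterexample is the lexicographic order $<_{\mathrm{lex}}$ restricted to $A$. It has a name fixed by every permutation, and it is a strict linear order. No transposition of two points is an automorphism of a linear order.

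Your argument is the correct one for the atoms of the basic Fraenkel permutation model. There permutations act on the atoms themselves and there is no generic to move, but this does not transfer to Cohen reals. The paper instead takes the Hamkins--Palumbo set-sized symmetric extension in which $\RR$ fails, which exists over any model of $\ZF$. It then applies Theorem~\ref{thm:preservation} over a model of $\ZF+\VP$.

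\textbf{Second implication.} You never identify a concrete model. The transfer principle you rely on is also false in $\ZF$: namely, that $\RR$ passes to any set injecting into a set with a rigid relation. Every set $X$ is a subset of the transitive set $\tc(\{X\})$, and $\in$ is rigid on any transitive set by $\in$-induction and extensionality. So that principle would make $\RR$ a theorem of $\ZF$, contradicting Hamkins--Palumbo.

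The missing idea is simply to use the basic Cohen model here:
\begin{enumerate}
\item Start from a model of $\ZFC+\VP$ via Theorem~\ref{thm:consistency-transfer}.
\item Note that $A$ is infinite and Dedekind-finite, so $\AC$ and $\DC$ fail, as in Corollary~\ref{cor:choice}.
\item Quote Hamkins--Palumbo for $\RR$ in this model.
\item Get $\VP$ from Theorem~\ref{thm:preservation}.
\end{enumerate}
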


\begin{proof}
Hamkins and Palumbo prove that every model of $\ZF$ has a set-sized symmetric extension in which $\RR$ fails, and that $\RR$ holds in the basic Cohen model \cite[Theorems~5 and~6]{HamkinsPalumbo}.  Apply Theorem~\ref{thm:preservation} to the first construction over a model of $\ZF+\VP$.  For the second implication, first use Theorem~\ref{thm:consistency-transfer} to obtain a model of $\ZFC+\VP$, and then use the Cohen model from the proof of Corollary~\ref{cor:choice}; it satisfies $\neg\AC+\neg\DC$, and Theorem~\ref{thm:preservation} preserves $\VP$.
\end{proof}
\endgroup

\begingroup

\section{The Feferman--L\'evy model}\label{sec:FL}

\begin{theorem}\label{thm:FL}
If $\ZF+\VP$ is consistent, then so is $\ZF+\VP+\neg\AC+\neg\DC$ together with the assertion that $\mathbb R$ is a countable union of countable sets.
\end{theorem}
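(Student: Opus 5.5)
The plan is to start from a model of $\ZFC+\VP$, apply the classical Feferman--L\'evy symmetric construction, and use Theorem~\ref{thm:preservation} to carry $\VP$ into the symmetric model.

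First, assume $\Con(\ZF+\VP)$. Theorem~\ref{thm:consistency-transfer} then gives a countable model $\mathfrak M\models\ZFC+\VP$. Working in $\mathfrak M$, take the Feferman--L\'evy symmetric system $\mathscr S=\langle\mathbb P,\Gamma,\mathcal F\rangle$ with the following components:
\begin{itemize}
\item $\mathbb P$ is the finite-support product $\prod_{n<\omega}\Col(\omega,\aleph_n)$ (finite partial functions $p$ on $\omega\times\omega$ with $p(n,i)<\omega_n$);
\item $\Gamma$ is the group of automorphisms acting coordinatewise, each by a permutation of $\omega_n$;
\item $\mathcal F$ is the normal filter generated by the subgroups $\operatorname{fix}(n)$ consisting of automorphisms that are the identity on coordinates below $n$.
\end{itemize}
This system is set-sized, so Theorem~\ref{thm:preservation} shows that $W=\HS^G\models\ZF+\VP$ for any $\mathfrak M$-generic $G$.

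Next, verify the choice-theoretic content of $W$, following the standard analysis (for instance \cite[Section~5.3 and Example~15.57]{Jech}), which only uses $\ZFC$ in the ground model. The key steps are these:
\begin{itemize}
\item Every real in $W$ has a name that is symmetric for some $\operatorname{fix}(n)$. Homogeneity of the tail then shows that such a real lies in $\mathfrak M[G\restr n]$, where $G\restr n$ is the restriction of $G$ to the first $n$ coordinates.
\item Each $\mathfrak M[G\restr n]$ is a model of $\ZFC$ in which $\aleph_{n}^{\mathfrak M}$ is countable. Hence $\mathbb R^{\mathfrak M[G\restr n]}$ is countable there, and $W$ contains the sequence $\langle\mathbb R\cap\mathfrak M[G\restr n]:n<\omega\rangle$ via the obviously symmetric name.
\item Each term of that sequence is countable in $W$, and $\mathbb R^W$ is their union.
\item Every $\aleph_n^{\mathfrak M}$ is collapsed in $W$, so $\omega_1^W=\aleph_\omega^{\mathfrak M}$.
\end{itemize}

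From these facts, $\neg\AC$ follows because $\mathbb R$ is a countable union of countable sets while $\omega_1$ is not countable. Indeed, under countable choice such a union would be countable, whereas $\mathbb R$ is uncountable. The same observation gives $\neg\DC$, since $\DC$ implies countable choice.

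The main obstacle is the first bullet point: showing that each real of $W$ lies in some $\mathfrak M[G\restr n]$ and that the sequence of these sets of reals is in $W$. To handle it I would use the product factorization $\mathbb P\cong\mathbb P\restr n\times\mathbb P^{(n)}$ together with the weak homogeneity of the tail under $\operatorname{fix}(n)$. This shows that for any $\check k\in\check\omega$, the statement $\check k\in\dot x$ is decided by conditions in $\mathbb P\restr n$. The $\VP$ part requires no further work, because Theorem~\ref{thm:preservation} covers it entirely.
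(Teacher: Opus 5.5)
Your outline is the paper's: a $\ZFC+\VP$ ground model from Theorem~\ref{thm:consistency-transfer}, the Feferman--L\'evy system, and Theorem~\ref{thm:preservation} for $\VP$. However, collapsing exactly the $\aleph_n$ breaks the countability step, because the model $\mathfrak M$ supplied by Theorem~\ref{thm:consistency-transfer} carries no GCH-type information.

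Your second bullet is false as stated. Since $\mathfrak M[G\restr n]\models\ZFC$, Cantor's theorem makes its reals uncountable there. They can become countable only at a later stage, and that needs $2^{\aleph_m}<\aleph_\omega$ in $\mathfrak M$ for the relevant $m$, since once $\aleph_m$ is collapsed, the ground-model subsets of $\omega_m$ become reals.

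Without such an assumption the conclusion actually fails. Suppose $\mathfrak M\models 2^{\aleph_0}>\aleph_\omega$; this is compatible with $\VP$, for instance after adding $\aleph_{\omega+1}$ Cohen reals and applying Theorem~\ref{thm:preservation} with the trivial group. Your $\mathbb P$ has size $\aleph_\omega$, so it preserves $\aleph_{\omega+1}^{\mathfrak M}$. Hence $\mathbb R^{\mathfrak M}$ remains uncountable in $\mathfrak M[G]$. Now suppose $\mathbb R^W$ were the union of countably many sets, each countable in $W$. That family also lies in the $\ZFC$ model $\mathfrak M[G]$, so $\mathbb R^W\supseteq\mathbb R^{\mathfrak M}$ would be countable there, a contradiction.

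This is exactly why the paper chooses the collapse targets recursively, with $\kappa_{n+1}>|N_n|$, where $N_n$ is the set of nice names for reals over the first $n$ columns. Inside $V[G_n]$, the set $R_n$ then injects into $|N_n|^V<\kappa_{n+1}$. The generic surjection $\omega\to\kappa_{n+1}$ from the next column then makes $R_n$ countable in $W$.

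Separately, as written your group permutes the values in $\omega_n$. It must instead permute the $\omega$-coordinate inside each column, as the paper does. Value permutations give no weak homogeneity of the tail: a condition with $p(m,0)=p(m,1)$ can never be moved to be compatible with a condition having $q(m,0)\neq q(m,1)$. Worse, let $g_m$ be the $m$th generic function. The canonical name for the real $\{k:g_m(2k)=g_m(2k+1)\}$ is fixed by every value permutation, yet by genericity this real is not in $\mathfrak M$. So your first bullet would fail for that group even before the cardinal-arithmetic issue arises.
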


\begin{proof}
By Theorem~\ref{thm:consistency-transfer}, it suffices to start with $V\models\ZFC+\VP$.  {Let $\mathbb P_0$ be trivial, let $N_0$ consist of the canonical names for the ground-model reals, and choose an infinite regular cardinal $\kappa_1>|N_0|$.  Recursively, after $\kappa_1,\ldots,\kappa_n$ have been chosen, define the finite-stage forcing by
\[
\mathbb P_n=\prod_{1\leq m\leq n}^{\mathrm{fin}}\Coll(\omega,\kappa_m),
\]
let $N_n$ be the set of nice $\mathbb P_n$-names for reals, and choose an infinite regular cardinal $\kappa_{n+1}>\max\{\kappa_n,|N_n|\}$.}  Following the standard Feferman--L\'evy construction \cite[Theorem~10.6]{Jech}, use the full forcing
\[
{\mathbb P=\prod_{1\leq m<\omega}^{\mathrm{fin}}\Coll(\omega,\kappa_m).}
\]
Let $\Gamma$ consist of the permutations acting independently on the $\omega$-coordinate of each column, and let $\mathcal F$ be generated by the subgroups $H_n$ fixing the first $n$ columns pointwise, with $H_0=\Gamma$.  Let $G$ be generic and $W=\HS^G$.  Proposition~\ref{prop:symmetric-ZF} and Theorem~\ref{thm:preservation} give
\[
W\models\ZF+\VP.
\]
For $n<\omega$, let $G_n$ be the restriction of $G$ to columns $1,\ldots,n$, with $G_0$ trivial.  We use the notation
\[
{R_n=\mathbb R^{V[G_n]}.}
\]

If $x\in W$ is a real, $\dot x$ has support $H_n$, and $p\forcesS\check k\in\dot x$, the usual support argument gives
\[
p\restr n\forcesS\check k\in\dot x.
\]
If $q\leq p\restr n$ forced the opposite decision, choose $\pi\in H_n$ moving the finitely many coordinates of $p$ above column $n$ away from those of $q$.  Then $\pi p$ and $q$ are compatible and $\pi$ fixes $\dot x$, a contradiction.  {Therefore
\[
x=\{k\in\omega:\exists q\in G_n\ (q\Vdash_{\mathbb P}\check k\in\dot x)\},
\]
and ordinary and symmetric forcing agree in this atomic formula for these hereditarily symmetric names.  Hence $x\in V[G_n]$.  Conversely, every $\mathbb P_n$-name is hereditarily fixed by $H_n$, and hence $V[G_n]\subseteq W$.}  Consequently,
\[
\mathbb R^W=\bigcup_{n<\omega}R_n.
\]

{Regard the set $N_n$ fixed above as a set of $\mathbb P$-names.  The names used below are}
\[
\dot R_n=\{\langle\dot y,1_{\mathbb P}\rangle:\dot y\in N_n\},
\qquad
\dot S=\{\langle\langle\check n,\dot R_n\rangle^\bullet,1_{\mathbb P}\rangle:n<\omega\}.
\]
Every automorphism preserves the first $n$ columns as a set and permutes $N_n$, so $\dot R_n$ and $\dot S$ are invariant.  Their constituent names are supported by $H_n$, hence $\dot S\in\HS$ and
\[
\dot S^G=\langle R_n:n<\omega\rangle.
\]
Hence the entire sequence lies in $W$.

Finally, each $R_n$ is countable in $W$.  {Fix $n<\omega$ and, in $V$, well-order $N_n$.  Mapping each real in $R_n$ to the least index of a nice name evaluating to it gives, in $V[G_n]\subseteq W$, an injection $e_n:R_n\longrightarrow |N_n|^V$, where $|N_n|^V<\kappa_{n+1}$.
{The next generic column adds a surjection $g_{n+1}:\omega\to\kappa_{n+1}$.  Extending $e_n^{-1}$ by a fixed value off $\operatorname{ran}(e_n)$ and composing with $g_{n+1}$ gives a surjection from $\omega$ onto $R_n$.  For $n=0$, the filter $G_0$ is trivial, $N_0$ is the chosen set of canonical names, and $g_1$ is the first generic column.}}

Thus $W$ contains the sequence $\langle R_n:n<\omega\rangle$ of countable sets with union $\mathbb R^W$.  By Cantor's theorem, $\mathbb R^W$ is uncountable.  Hence countable choice fails, and so does $\DC$.
\end{proof}
\endgroup

\section{The full Solovay model}

Suppose for this section that $V\models\ZFC+\VP$.  By the final clause of Lemma~\ref{lem:ZF-ranks}, fix a measurable cardinal $\kappa$ and a nonprincipal $\kappa$-complete ultrafilter $U$ on $\kappa$.  Let
$G\subseteq{\Coll}(\omega,{<}\kappa)$ be $V$-generic, put
$R=(\omega^\omega)^{V[G]}$, and define
\begin{equation}\label{eq:full-Solovay}
\mathcal S\!ol=\HOD_{V\cup R}^{V[G]},
\end{equation}
where individual elements of $V$ and individual members of $R$ are allowed as parameters.  We use $\LM$, $\BP$, and $\PSP$ for the assertions that every set of reals is, respectively, Lebesgue measurable, has the Baire property, and has the perfect set property.

\begin{lemma}\label{lem:Solovay-regularity}
The model $\mathcal S\!ol$ satisfies
$\ZF+\DC+\LM+\BP+\PSP$, and $\kappa=\omega_1^{\mathcal S\!ol}$.
\end{lemma}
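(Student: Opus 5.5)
This lemma is Solovay's classical theorem, and I would prove it by reducing to it rather than reproving it. The only hypothesis it needs is that $\kappa$ is inaccessible in $V$, which holds because $\kappa$ is measurable. The argument has three steps.

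First, I would record that $\kappa$ is inaccessible. It is regular and a strong limit, since every measurable cardinal is under $\ZFC$. Hence $\Coll(\omega,{<}\kappa)$ is $\kappa$-c.c., and every set of ordinals of size $<\kappa$ in $V[G]$ lies in some $V[G\restr\alpha]$ with $\alpha<\kappa$. This gives the two standard facts:
\begin{enumerate}
\item every real of $V[G]$ belongs to an intermediate model $V[G\restr\alpha]$;
\item $V[G]$ is a $\Coll(\omega,{<}\kappa)$-generic extension of each $V[G\restr\alpha][x]$ (homogeneity and factoring).
\end{enumerate}
Together these show $\kappa=\omega_1^{V[G]}$.

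Second, I would verify $\ZF+\DC$ in $\mathcal S\!ol$. $\ZF$ is standard for $\HOD_{V\cup R}^{V[G]}$, by the usual argument for hereditarily ordinal definable classes over a set or class of parameters closed under finite sequences; here $V\cup R$ is a definable class of $V[G]$, which is what this needs. For $\DC$, take a relation in $\mathcal S\!ol$ on some $A$ with no terminal nodes. Countable sequences of elements of $\mathcal S\!ol$ can be coded using a real together with ordinal and $V$-parameters. Because every $\omega$-sequence of reals in $V[G]$ already lies in $\mathcal S\!ol$, one builds the branch in $V[G]$ by choice there. Then it is coded by countably many ordinal-definable-from-parameter objects, so it is in $\mathcal S\!ol$. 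I would cite Jech's treatment (Theorem~26.14 and the discussion of $L(\mathbb R)$ versus $\HOD(\mathbb R)$) rather than redo it.

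Third, I would prove the regularity properties. The main step is the standard one. A set of reals $X\in\mathcal S\!ol$ is definable in $V[G]$ from $v\in V$ and a real $r$, with $r\in V[G\restr\alpha]$. By homogeneity of the quotient forcing, membership $y\in X$ depends only on what is forced over $V[G\restr\alpha][y]$. Since almost every real, and comeager many reals, are random, respectively Cohen, over $V[G\restr\alpha]$ (countable in $V[G]$), $X$ agrees with a Borel set off a null set, respectively a meager set, coded in $V[G\restr\alpha]$. This gives $\LM$ and $\BP$. For $\PSP$, if $X$ has a real $y\notin V[G\restr\alpha]$, a perfect set of generics over $V[G\restr\alpha]$ stays in $X$; otherwise $X\subseteq V[G\restr\alpha]$ is countable.

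The main obstacle is bookkeeping for the parameter class. Solovay's original uses only ordinal and real parameters, whereas here arbitrary elements of $V$ are allowed. A ground parameter $v$ is not in $V[G\restr\alpha]$ as a small object, but homogeneity still works since $v\in V$ is fixed by all automorphisms. I would check this explicitly in the homogeneity step; everything else is identical to the literature.
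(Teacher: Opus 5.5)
Your proposal is correct and follows essentially the paper's route: both reduce $\ZF+\DC+\LM+\BP+\PSP$ to Solovay's theorem using that the measurable $\kappa$ is inaccessible, and both get $\kappa=\omega_1$ from the $\kappa$-c.c.\ of $\Coll(\omega,{<}\kappa)$; your sketch of the ground-parameter bookkeeping fills in what the paper delegates to the Sakai--Tanno convention. Two small slips need fixing: you compute $\omega_1^{V[G]}$, so you should add that $\omega_1^{\mathcal S\!ol}=\omega_1^{V[G]}$ because $R\subseteq\mathcal S\!ol\subseteq V[G]$, and in the $\LM$/$\BP$ step it is the set of reals (equivalently, Borel codes) of $V[G\restr\alpha]$, not the model itself, that is countable in $V[G]$.
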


\begin{proof}
The model in \eqref{eq:full-Solovay} is the full Solovay model in the precise convention of \cite[Definition~4 and the following paragraph]{SakaiTanno}.  Since a measurable cardinal is inaccessible in $\ZFC$, Solovay's theorem gives
$\mathcal S\!ol\models\ZF+\DC+\LM+\BP+\PSP$ \cite[Theorem~1]{Solovay}.

The collapse makes every $\alpha<\kappa$ countable in $\mathcal S\!ol$.  Since $\kappa$ is inaccessible, ${\Coll}(\omega,{<}\kappa)$ is $\kappa$-c.c. and preserves the uncountability of $\kappa$.  Hence $\kappa=\omega_1^{\mathcal S\!ol}$.
\end{proof}

\begin{lemma}\label{lem:Solovay-localization}
If $x\in\mathcal S\!ol$ is a set of ordinals, then
$x\in V[G_\beta]$ for some $\beta<\kappa$, where
$G_\beta=G\cap{\Coll}(\omega,{<}\beta)$.  Moreover, if
$\langle A_\xi:\xi<\delta\rangle\in\mathcal S\!ol$, where
$\delta<\kappa$ and every $A_\xi\subseteq\kappa$, then the whole sequence belongs to one such bounded-stage extension.
\end{lemma}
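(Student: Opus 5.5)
The plan is to prove Lemma~\ref{lem:Solovay-localization} with the standard Solovay factor analysis, using that $\kappa$ is inaccessible in $V$ and that ${\Coll}(\omega,{<}\kappa)$ is $\kappa$-c.c.

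\emph{First step: parameters.} Let $x\subseteq\Ord$ be in $\mathcal S\!ol$. By definition of $\HOD_{V\cup R}^{V[G]}$, $x$ is definable in $V[G]$ by a formula $\varphi(\xi,v,r)$ with $v\in V$ and a real $r\in R$. (Finitely many reals can be merged into one, and finitely many ground parameters into one.) Every real of $V[G]$ has a nice name involving countably many antichains. Each antichain has size $<\kappa$ by the $\kappa$-c.c. Since $\kappa$ is regular, this real therefore lies in $V[G_\alpha]$ for some $\alpha<\kappa$.

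\emph{Second step: factoring.} Next I use the factorization
\[
{\Coll}(\omega,{<}\kappa)\cong{\Coll}(\omega,{<}\alpha)\times{\Coll}(\omega,[\alpha,\kappa)).
\]
Over $V[G_\alpha]$, the tail forcing is isomorphic to ${\Coll}(\omega,{<}\kappa)$ and is weakly homogeneous. Hence
\[
\xi\in x\iff 1\Vdash_{\text{tail}}\varphi(\check\xi,\check v,\check r)
\]
is decided in $V[G_\alpha]$, so $x\in V[G_\alpha]$. This gives the first claim with $\beta=\alpha$.

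\emph{Third step: sequences.} For $\langle A_\xi:\xi<\delta\rangle$ with $\delta<\kappa$ and $A_\xi\subseteq\kappa$, I code the sequence as the set of ordinals
\[
\{\Gamma(\xi,\eta):\eta\in A_\xi\},
\]
where $\Gamma$ is Gödel pairing. This code is definable from the sequence together with ordinal parameters, so it lies in $\mathcal S\!ol$. Applying the first part places the code, and therefore the whole sequence, in a single $V[G_\beta]$. This is where the bounded-stage claim comes from.

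\emph{Main obstacle.} The delicate point is the homogeneity step. The definition of $x$ is evaluated in $V[G]$ with the arbitrary ground parameter $v\in V$, and the tail must not disturb the parameters. The ground parameter is harmless because it is a check name. The real $r$ must already belong to $V[G_\alpha]$, so the first step has to come before the factoring.

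I would also check carefully that the quotient forcing is genuinely isomorphic to a homogeneous collapse. This holds because ${\Coll}(\omega,[\alpha,\kappa))$ is a product of homogeneous Lévy collapses. For this I would cite Solovay's lemma \cite{Solovay} rather than reprove it.
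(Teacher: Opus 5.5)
Your proposal is correct and follows the paper's proof essentially step for step. Both arguments bound the support of a nice name for the real parameter using the $\kappa$-c.c.\ and regularity of $\kappa$, use weak homogeneity of the tail collapse over the bounded-stage model so that membership in $x$ is decided by the trivial condition, and reduce the sequence clause to the first clause by coding the sequence as a single set of ordinals (you use G\"odel pairing where the paper uses $\kappa\cdot\xi+\alpha$). One harmless inaccuracy: for $\alpha\geq2$ the tail ${\Coll}(\omega,[\alpha,\kappa))$ is only forcing-equivalent to ${\Coll}(\omega,{<}\kappa)$, not literally isomorphic to it, but your argument uses only its weak homogeneity, which holds as you say.
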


\begin{proof}
Let $x\in\mathcal S\!ol$ be a set of ordinals, and choose an ordinal $\theta$ with $x\subseteq\theta$.  There are a formula $\varphi$, ordinal parameters $\vec\gamma$, a parameter $a\in V$, and a real $r\in V[G]$ such that
\[
\alpha\in x\quad\text{if and only if}\quad
V[G]\models\varphi(\alpha,\vec\gamma,a,r)
\qquad(\alpha<\theta).
\]
A nice name for $r$ has support of size less than $\kappa$, and hence bounded support, so $r\in V[G_\beta]$ for some $\beta<\kappa$.  Let $\mathbb P^\beta$ be the quotient forcing over $V[G_\beta]$.  Since $\mathbb P^\beta$ is weakly homogeneous and its automorphisms fix the parameters,
\[
x=\{\alpha<\theta:
1_{\mathbb P^\beta}\Vdash
\varphi(\check\alpha,\check{\vec\gamma},\check a,\check r)\}.
\]
The definability of the forcing relation and Separation in $V[G_\beta]$ therefore give $x\in V[G_\beta]$.  This is the localization argument for the full Solovay model \cite[Definition~4, Fact~5, and the paragraph following Fact~5]{SakaiTanno}.

For the final assertion, code the sequence by
\[
C=\{\kappa\cdot\xi+\alpha:\xi<\delta,\ \alpha\in A_\xi\}.
\]
Then $C\in\mathcal S\!ol$, so $C\in V[G_\beta]$ for some $\beta<\kappa$.  Since
\[
A_\xi=\{\alpha<\kappa:\kappa\cdot\xi+\alpha\in C\},
\]
the sequence $\langle A_\xi:\xi<\delta\rangle$ belongs to $V[G_\beta]$.
\end{proof}

\begin{lemma}\label{lem:Solovay-measure}
In $\mathcal S\!ol$, the set
\[
U^*=\{A\in\mathcal P(\kappa)^{\mathcal S\!ol}:
(\exists Y\in U)\,Y\subseteq A\}
\]
is a nonprincipal, internally $\kappa$-complete ultrafilter on $\kappa$.
\end{lemma}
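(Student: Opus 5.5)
The plan is to use Lemma~\ref{lem:Solovay-localization} to pull every subset of $\kappa$ in $\mathcal S\!ol$ into a bounded-stage extension $V[G_\beta]$, and then use the small-forcing lifting of measurability. Since $|\Coll(\omega,{<}\beta)|<\kappa$ for $\beta<\kappa$, the standard Lévy--Solovay argument applies. In $V[G_\beta]$, the filter $U_\beta=\{A\subseteq\kappa:\exists Y\in U\,(Y\subseteq A)\}$ is a $\kappa$-complete nonprincipal ultrafilter. I would recall the proof briefly. Given a name $\dot A$ for a subset of $\kappa$, split $\kappa$ according to the set $\{p:p\Vdash\check\alpha\in\dot A\}$. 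This yields fewer than $\kappa$ pieces, so $\kappa$-completeness of $U$ in $V$ gives one piece $Y\in U$ with a constant value $S$. Genericity then decides whether $Y\subseteq A$ or $Y\cap A=\varnothing$, according to whether $S\cap G_\beta\neq\varnothing$. The same argument, applied to a name for a sequence of fewer than $\kappa$ subsets, gives $\kappa$-completeness in $V[G_\beta]$.

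Next I check the properties of $U^*$ inside $\mathcal S\!ol$. First, $U^*\in\mathcal S\!ol$: it is definable in $V[G]$ from the parameter $U\in V$ together with $\mathcal P(\kappa)^{\mathcal S\!ol}$, and $\mathcal S\!ol$ is closed under the relevant definability since $U$ is an allowed parameter. Concretely, $U^*=\{A\in\mathcal S\!ol:A\subseteq\kappa,\ \exists Y\in U\ Y\subseteq A\}$, and this set is ordinal-definable over $V[G]$ from parameters in $V$. For the ultrafilter property, take $A\in\mathcal P(\kappa)^{\mathcal S\!ol}$. Lemma~\ref{lem:Solovay-localization} gives $A\in V[G_\beta]$, and $U_\beta$ then supplies $Y\in U$ with $Y\subseteq A$ or $Y\subseteq\kappa\setminus A$. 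Upward closure is immediate, and intersections follow from $U$ being a filter. Nonprincipality holds because every $Y\in U$ is unbounded in $\kappa$, so no singleton or bounded set contains one.

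Internal $\kappa$-completeness is the step I expect to be the main obstacle, because choice in $\mathcal S\!ol$ is limited. Take $\langle A_\xi:\xi<\delta\rangle\in\mathcal S\!ol$ with $\delta<\kappa$ and each $A_\xi\in U^*$. One cannot simply choose witnesses $Y_\xi$ inside $\mathcal S\!ol$, but we do not need to. By the second clause of Lemma~\ref{lem:Solovay-localization}, the entire sequence lies in a single $V[G_\beta]$. There each $A_\xi\in U_\beta$, because $A_\xi$ is an element of $V[G_\beta]$ containing some $Y\in U$. The $\kappa$-completeness of $U_\beta$, verified in $V[G_\beta]$ where choice holds, then produces $Y\in U$ with $Y\subseteq\bigcap_\xi A_\xi$. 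Hence the intersection belongs to $U^*$.

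The point to handle with care is in the lifting argument of the first paragraph. The ground witnesses must come from $U$ itself, so that $U_\beta$ is generated by $U$ for every $\beta$. This uniformity is what makes membership in $U^*$ agree with membership in $U_\beta$ at every stage.
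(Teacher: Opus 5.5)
Your proposal is correct and follows essentially the paper's proof. You localize each subset of $\kappa$, or the whole ${<}\kappa$-sequence, into a single bounded stage $V[G_\beta]$, where the filter generated by $U$ is a $\kappa$-complete ultrafilter by the L\'evy--Solovay small-forcing argument, and you read off the properties of $U^*$ from $U_\beta$. The paper cites Banerjee for that lifting rather than sketching it, and obtains $U^*\in\mathcal S\!ol$ by Separation in $\mathcal S\!ol$ rather than by definability over $V[G]$.
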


\begin{proof}
Since $U\in V\subseteq\mathcal S\!ol$, Separation gives $U^*\in\mathcal S\!ol$.  For $\beta<\kappa$, define
\[
U_\beta=\{X\in\mathcal P(\kappa)^{V[G_\beta]}:
(\exists Y\in U)\,Y\subseteq X\}.
\]
The forcing ${\Coll}(\omega,{<}\beta)$ has ground-model cardinality less than $\kappa$, so \cite[Theorem~2.5(2)]{Banerjee} shows that $U_\beta$ is a $\kappa$-complete ultrafilter.  If $A\subseteq\kappa$ belongs to $\mathcal S\!ol$, choose $\beta<\kappa$ with $A\in V[G_\beta]$ by Lemma~\ref{lem:Solovay-localization}.  Then $A\in U^*$ if and only if $A\in U_\beta$, so exactly one of $A$ and $\kappa\setminus A$ belongs to $U^*$.

To verify internal $\kappa$-completeness, let $\delta<\kappa$ and
$\langle A_\xi:\xi<\delta\rangle\in\mathcal S\!ol$, with every
$A_\xi\in U^*$.  By Lemma~\ref{lem:Solovay-localization}, the sequence
$\langle A_\xi:\xi<\delta\rangle$ belongs to $V[G_\beta]$ for some
$\beta<\kappa$.  Since each $A_\xi\in U^*$, we have $A_\xi\in U_\beta$.  The $\kappa$-completeness of $U_\beta$ therefore gives
\[
\bigcap_{\xi<\delta}A_\xi\in U_\beta.
\]
The intersection belongs to $\mathcal S\!ol$, and its membership in $U_\beta$ therefore implies that it belongs to $U^*$.  Finally, no singleton belongs to $U^*$ because $U$ is nonprincipal.  Thus $U^*$ is nonprincipal and internally $\kappa$-complete.
\end{proof}

\begin{lemma}\label{lem:Solovay-symmetric}
The model $\mathcal S\!ol$ is the symmetric extension arising from a set-sized symmetric system in $V$.
\end{lemma}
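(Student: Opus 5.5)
I will present $\mathcal S\!ol$ as the symmetric extension given by the standard Lévy-collapse symmetric system, and then check that its hereditarily symmetric interpretations coincide with $\HOD_{V\cup R}^{V[G]}$.

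\textbf{The system.} Let $\mathbb P={\Coll}(\omega,{<}\kappa)$, a set in $V$. Let $\Gamma$ be the group of automorphisms of $\mathbb P$ that act coordinatewise. For each $\alpha<\kappa$, such an automorphism applies a permutation of $\omega$ to the $\alpha$th column, that is, to the first argument of the partial functions $\omega\to\alpha$. Let $\mathcal F$ be the filter generated by the subgroups
\[
\operatorname{fix}(\beta)=\{\pi\in\Gamma:\pi\restr{\Coll}(\omega,{<}\beta)=\mathrm{id}\}\qquad(\beta<\kappa).
\]
This filter is normal. Indeed, every $\pi\in\Gamma$ preserves each initial segment ${\Coll}(\omega,{<}\beta)$ as a set, so conjugating $\operatorname{fix}(\beta)$ by $\pi$ returns $\operatorname{fix}(\beta)$.

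\textbf{$\HS^G\subseteq\mathcal S\!ol$.} I argue by $\in$-induction on names. Let $\sigma\in\HS$ with $\operatorname{fix}(\beta)\leq\sym(\sigma)$. The value $\sigma^G$ is the set of $\tau^G$ for $\tau$ in the domain of $\sigma$ such that some $p\in G$ forces $\tau\in\sigma$. A support argument, as in the proof of Theorem~\ref{thm:FL}, shows that membership is decided by conditions modulo $\operatorname{fix}(\beta)$. Conditions whose parts above $\beta$ are compatible can be moved onto one another by automorphisms in $\operatorname{fix}(\beta)$. Hence $\sigma^G$ is definable in $V[G]$ from the parameters $\sigma\in V$ and a real coding $G_\beta$, together with the (inductively) definable map $\tau\mapsto\tau^G$ on names in $\HS$. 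Definability is uniform, because the evaluation $\tau\mapsto\tau^G$ restricted to names in $\HS$ with support at most $\beta$ is definable from $G_\beta$ using homogeneity of the quotient. The same computation places each element of $\sigma^G$ in $\HOD_{V\cup R}$, so hereditariness is automatic.

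\textbf{$\mathcal S\!ol\subseteq\HS^G$.} I induct on rank. Let $x\in\mathcal S\!ol$ be defined in $V[G]$ by $\varphi(\cdot,a,r)$ with $a\in V$ and $r\in V[G_\beta]$, and assume $x\subseteq\HS^G$. By the weak homogeneity of the quotient used in Lemma~\ref{lem:Solovay-localization}, $y\in x$ holds if and only if some $p\in G_\beta$ forces $\varphi(\dot y,\check a,\dot r)$ over the quotient. Take $\dot r$ a nice ${\Coll}(\omega,{<}\beta)$-name. Collect all names $\tau\in\HS$ of bounded name rank whose values lie in $x$, and form
\[
\sigma=\{\langle\tau,p\rangle:\tau\in\HS\cap V_\zeta,\ p\Vdash\varphi(\tau,\check a,\dot r)\}.
\]
Each automorphism in $\operatorname{fix}(\beta)$ fixes $\dot r$ and $\check a$ and permutes the set $\HS\cap V_\zeta$. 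Hence $\sym(\sigma)\geq\operatorname{fix}(\beta)$, so $\sigma\in\HS$ and $\sigma^G=x$. The bound $\zeta$ exists by Collection, since the induction hypothesis gives a name in $\HS$ for each element of $x$.

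\textbf{Main obstacle.} I expect the hardest step to be the first inclusion, in particular making the evaluation of names in $\HS$ definable in $V[G]$ from a single real and a ground parameter. This needs the standard fact that $V[G]$ is a ${\Coll}(\omega,{<}\kappa)$-generic extension of $V[G_\beta]$, with a quotient that is weakly homogeneous under $\operatorname{fix}(\beta)$. It also needs the observation that $G_\beta$ is coded by a real in $V[G]$, since $\beta$ is countable there. I will cite the standard identification of the full Solovay model with this symmetric model \cite[Definition~4 and Fact~5]{SakaiTanno} for these details, and supply only the support and homogeneity arguments sketched above.
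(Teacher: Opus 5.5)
\textbf{Your symmetric system does not give $\mathcal S\!ol$, so the inclusion $\HS^G\subseteq\mathcal S\!ol$ is false, not merely unproved.}

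Your group $\Gamma$ only permutes the $\omega$-coordinate inside each column. So every $\pi\in\Gamma$ maps ${\Coll}(\omega,{<}\gamma)$ onto itself and permutes the nice ${\Coll}(\omega,{<}\gamma)$-names for reals. Consider the names
$\dot R_\gamma=\{\langle\dot r,1_{\mathbb P}\rangle:\dot r\text{ a nice }{\Coll}(\omega,{<}\gamma)\text{-name for a real}\}$ and
$\dot s=\{\langle\langle\check\gamma,\dot R_\gamma\rangle^\bullet,1_{\mathbb P}\rangle:\gamma<\kappa\}$.
Both are fixed by all of $\Gamma$, and each constituent $\dot r$ is fixed by $\operatorname{fix}(\gamma)$. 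Hence $\dot s\in\HS$, and $s=\dot s^G=\langle\mathbb R^{V[G_\gamma]}:\gamma<\kappa\rangle$ lies in your model. This is just the $\kappa$-column version of the Feferman--L\'evy system of Section~7.

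But $s\notin\mathcal S\!ol$. In $\mathcal S\!ol$ we have $\kappa=\omega_1$, and each $\mathbb R^{V[G_\gamma]}$ is countable, since it is countable in some $V[G_\delta]\subseteq\mathcal S\!ol$. These sets increase with union $\mathbb R$. Let $f(x)$ be the least $\gamma$ with $x\in\mathbb R^{V[G_\gamma]}$. Then $\{(x,y):f(x)\leq f(y)\}$ has countable vertical sections and co-countable horizontal sections. Sierpi\'nski's Fubini argument makes this set non-measurable, contradicting $\LM$ in $\mathcal S\!ol$.

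\textbf{Where your argument breaks.} The failing step is your claim that evaluation of names in $\HS$ with support at most $\beta$ is definable from $G_\beta$ ``by homogeneity of the quotient.'' The name $\dot s$ has support $0$, yet $\dot s^G$ is not definable in $V[G]$ from ground parameters and a real. Weak homogeneity of the quotient only shows that sets of ordinals definable from parameters in $V[G_\beta]$ lie in $V[G_\beta]$. It does not make $\tau\mapsto\tau^G$ definable for names whose hereditary constituents have unbounded supports. Moreover, a group that never moves one column onto another leaves symmetric exactly those names, like $\dot R_\gamma$, that record at which column a real was added.

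\textbf{What a repair needs.} You would have to enlarge the group so that it can transport columns of equal size onto one another. The paper uses all of $\Aut(\mathbb B)$ for the Boolean completion $\mathbb B$. Even then, $\HS^G\subseteq\HOD^{V[G]}_{V\cup R}$ requires a genuine argument. The paper gets both inclusions at once from Karagila--Schilhan's Proposition~9.4, applied to the self-reflecting name $\dot R_{\mathrm{can}}$.

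Also, the Sakai--Tanno reference you plan to cite concerns the $\HOD$ definition and localization. It does not contain any identification of $\mathcal S\!ol$ with a symmetric model.
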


\begin{proof}
Let $\mathbb B$ be the Boolean completion of ${\Coll}(\omega,{<}\kappa)$, let $H\subseteq\mathbb B$ be the induced generic, and define the Boolean class name below using the condition-first convention of \cite{KaragilaSchilhan}:
\[
\dot R_{\mathrm{can}}=
\{(b,\tau):\tau\text{ is a }\mathbb B\text{-name and }
0<b\leq\|\tau\in\check\omega^{\check\omega}\|_{\mathbb B}\}.
\]
Then $\dot R_{\mathrm{can}}^H=R$.  Equivariance of Boolean values gives
$\pi\dot R_{\mathrm{can}}=\dot R_{\mathrm{can}}$ for every
$\pi\in\Aut(\mathbb B)$.  Moreover, $\dot R_{\mathrm{can}}$ self-reflects in the sense of \cite[Definitions~9.1--9.2]{KaragilaSchilhan}: for any fixed ground-model real $r_0$, both $\dot R_{\mathrm{can}}$ and $\check r_0$ have stabilizer $\Aut(\mathbb B)$, and
$1_{\mathbb B}\Vdash\check r_0\in\dot R_{\mathrm{can}}$.

Consequently, by \cite[Proposition~9.4]{KaragilaSchilhan}, the symmetric extension arising from
$\mathcal O(\mathbb B,\dot R_{\mathrm{can}})$ is exactly
\[
\HOD_{V\cup R}^{V[H]}=\mathcal S\!ol,
\]
using the displayed self-reflecting name.  This symmetric system is set-sized in $V$: $\mathbb B$ and $\Aut(\mathbb B)$ are sets, and its generated normal filter is a definable subset of
$\mathcal P(\Aut(\mathbb B))$.
\end{proof}

\begin{corollary}\label{cor:Solovay}
$\Con(\ZF+\VP)$ implies the consistency of
\[
\ZF+\VP+\DC+\LM+\BP+\PSP+\neg\AC+
``\omega_1\text{ carries a nonprincipal }\omega_1\text{-complete ultrafilter}.''
\]
\end{corollary}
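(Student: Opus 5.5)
The plan is to start from a model of $\ZFC+\VP$, build the full Solovay model over it, and then transfer every listed property to the symmetric extension. By Theorem~\ref{thm:consistency-transfer}, $\Con(\ZF+\VP)$ yields a model of $\ZFC+\VP$, and we may take it countable, so that generic filters exist. Working inside it as $V$, Lemma~\ref{lem:ZF-ranks} supplies a measurable cardinal $\kappa$ with a nonprincipal $\kappa$-complete ultrafilter $U$. We then form $\mathcal S\!ol$ as in \eqref{eq:full-Solovay} from a $V$-generic $G\subseteq\Coll(\omega,{<}\kappa)$.

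The first point is that $\VP$ survives. By Lemma~\ref{lem:Solovay-symmetric}, $\mathcal S\!ol$ is the symmetric extension given by a set-sized symmetric system in $V$. Theorem~\ref{thm:preservation} then yields $\mathcal S\!ol\models\ZF+\VP$. The one thing to check is that the generic filter for the Boolean completion $\mathbb B$ is $V$-generic. This holds because it is induced by $G$.

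The remaining properties come from the earlier lemmas.
\begin{enumerate}
\item Lemma~\ref{lem:Solovay-regularity} gives $\DC+\LM+\BP+\PSP$ and $\kappa=\omega_1^{\mathcal S\!ol}$.
\item Lemma~\ref{lem:Solovay-measure} shows that $U^*\in\mathcal S\!ol$ is a nonprincipal, internally $\kappa$-complete ultrafilter on $\kappa$. Since $\kappa=\omega_1^{\mathcal S\!ol}$, inside $\mathcal S\!ol$ this is a nonprincipal $\omega_1$-complete ultrafilter on $\omega_1$.
\end{enumerate}
We also need $\kappa$-completeness in the internal sense, meaning closure under intersections of fewer than $\kappa$ sets along sequences lying in $\mathcal S\!ol$. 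That is exactly what the lemma proves.

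Finally, $\neg\AC$ follows from $\LM$ alone. Under $\AC$ there is a Vitali set, which is non-measurable, and so $\LM$ would fail. Alternatively, $\PSP$ already rules out a well-ordering of $\mathbb R$, since $\mathbb R$ would then contain an uncountable set of size $\aleph_1$ with no perfect subset.

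The main delicate point is the application of Theorem~\ref{thm:preservation}, specifically that the symmetric system of Lemma~\ref{lem:Solovay-symmetric} is genuinely a set in $V$. This is asserted in that lemma, so the argument only needs to combine the cited results. The statement is therefore witnessed by $\mathcal S\!ol$, and the implication holds.
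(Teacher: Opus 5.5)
Your proposal is correct and follows the paper's proof essentially step for step: pass to a model of $\ZFC+\VP$ via Theorem~\ref{thm:consistency-transfer}, obtain $\VP$ in $\mathcal S\!ol$ from Lemma~\ref{lem:Solovay-symmetric} and Theorem~\ref{thm:preservation}, and read off the remaining properties from Lemmas~\ref{lem:Solovay-regularity} and~\ref{lem:Solovay-measure}. Your derivation of $\neg\AC$ from $\LM$ via a Vitali set spells out what the paper calls ``the standard Solovay argument.''
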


\begin{proof}
By Theorem~\ref{thm:consistency-transfer}, take the starting model to satisfy $\ZFC+\VP$.  By Lemma~\ref{lem:Solovay-symmetric} and Theorem~\ref{thm:preservation},
$\mathcal S\!ol\models\VP$.  Lemma~\ref{lem:Solovay-regularity} gives $\ZF+\DC+\LM+\BP+\PSP$ and identifies $\kappa$ with $\omega_1^{\mathcal S\!ol}$, while Lemma~\ref{lem:Solovay-measure} gives the nonprincipal, internally $\kappa$-complete ultrafilter.  The standard Solovay argument gives $\neg\AC$.
\end{proof}

\begingroup

\section{{Limits of Choice restoration}}\label{sec:class-forcing}

Question~\ref{q:strong-restoration} asks whether every countable model of $\ZF+\VP$ has a class-generic extension satisfying $\ZFC+\VP$.  Under the stated forcing-theorem hypotheses, Proposition~\ref{prop:class-forcing-criteria} characterizes when a class forcing preserves $\VP(\Pi_m)$.  Proposition~\ref{prop:SVC-restoration} gives a positive answer under $\mathsf{SVC}$.  Corollary~\ref{cor:wellorderable-no-choice-restoration} rules out well-orderable set forcing, while Proposition~\ref{prop:countability-essential} gives an $\aleph_1$-sized counterexample to the version without countability.

\begingroup

\begin{proposition}\label{prop:class-forcing-criteria}
Let $(M,\mathcal X)\models\GBC$ be a countable transitive two-sorted model, {where $\GBC$ denotes G\"odel--Bernays set theory with Global Choice}, and let $\mathbb P\in\mathcal X$ be a class forcing with a largest condition.  Assume that the forcing theorem holds for first-order formulas, that the relevant forcing relations belong to $\mathcal X$, that a generic may be chosen through every condition, and that every generic extension has the same ordinals and satisfies $\ZFC$.  For standard $m\geq1$, let $K_m(\kappa)$ mean that $\kappa$ is supercompact if $m=1$, and $C^{(m-1)}$-extendible if $m\geq2$.  Then
\[
1_{\mathbb P}\Vdash\VP(\Pi_m)\quad\text{if and only if}\quad
\forall p\in\mathbb P\ \forall\eta\in\Ord^M\ 
\exists q\leq p\ \exists\kappa>\eta\ 
q\Vdash K_m(\check\kappa).
\]
Here the left side means that every coded instance of $\VP(\Pi_m)$ is forced.  Consequently, $\mathbb P$ forces the full first-order scheme exactly when the right side holds for every standard $m\geq1$.
\end{proposition}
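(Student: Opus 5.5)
The plan is to reduce the proposition to Bagaria's $\ZFC$ characterization, applied inside each generic extension, and then translate "unboundedly many" between the extension and the forcing language by ordinary density arguments. Over $\ZFC$, Bagaria \cite{BagariaCn} shows that $\VP(\Pi_1)$ is equivalent to the existence of a proper class of supercompact cardinals, and that $\VP(\Pi_{m})$ for $m\geq2$ is equivalent to a proper class of $C^{(m-1)}$-extendible cardinals. This equivalence holds with set parameters and arbitrary set-sized languages, because arbitrary languages reduce to a fixed finite one by the coding of Lemma~\ref{lem:finite-language-reduction}. Every generic extension $M[G]$ satisfies $\ZFC$ and has the same ordinals, so inside $M[G]$ we have
\[
\VP(\Pi_m)\iff\{\kappa:K_m(\kappa)\}\text{ is unbounded in }\Ord^M .
\]
Bagaria's proofs are schematic: each instance of $\VP(\Pi_m)$ is derived from finitely many instances of unboundedness of $K_m$, and conversely. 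Since the forcing theorem holds for first-order formulas, the equivalence is forced by $1_{\mathbb P}$, instance by instance. It therefore suffices to show
\[
1_{\mathbb P}\Vdash\forall\eta\,\exists\kappa>\eta\,K_m(\kappa)
\]
if and only if the density condition on the right holds.

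For the forward direction, fix $p$ and $\eta$. Since $1_{\mathbb P}$ forces $\exists\kappa>\check\eta\,K_m(\kappa)$, so does $p$. By the forcing theorem, and using that a generic through $p$ exists, there are a generic $G\ni p$ and a $\kappa$ with $M[G]\models K_m(\kappa)$. The ordinal $\kappa$ lies in $M$ because the ordinals are unchanged. The truth lemma then gives some $r\in G$ with $r\Vdash K_m(\check\kappa)$. Let $q$ be a common extension of $p$ and $r$ in $G$; then $q\leq p$ and $q\Vdash K_m(\check\kappa)$.

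For the converse, suppose some $p$ forces that the $K_m$-cardinals are bounded. By the forcing theorem, some $p'\leq p$ and some ordinal $\eta$ satisfy $p'\Vdash\forall\kappa>\check\eta\,\neg K_m(\kappa)$; this uses that the ordinals are unchanged, so the bound may be taken to be a check name. The right-hand side, applied to $p'$ and $\eta$, yields $q\leq p'$ and $\kappa>\eta$ with $q\Vdash K_m(\check\kappa)$. This contradicts the fact that $q$ extends $p'$. Hence $1_{\mathbb P}$ forces unboundedness. The final sentence of the proposition follows by taking the conjunction over all $m$.

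The main obstacle is the first step: confirming that Bagaria's equivalence holds uniformly enough to be forced. One point is that it must hold with arbitrary set parameters, and with $\Pi_m$ counted after the language-coding of Lemma~\ref{lem:finite-language-reduction}; that coding must not raise the complexity beyond $\Pi_m$, and this needs checking. The other point is that only the first-order forcing theorem is available, so each direction must be a single first-order schema instance that $\ZFC$ proves. I will take this uniformity from the cited results of \cite{BagariaCn} together with the paragraph following Theorem~4.12 there, as in the proof of Theorem~\ref{thm:consistency-transfer}.
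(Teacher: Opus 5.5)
Your proposal is correct and takes essentially the paper's route: apply Bagaria's $\ZFC$ characterization inside each generic extension, get the forward direction from a generic through $p$, preservation of ordinals, the truth lemma and directedness, and get the converse by density (your contradiction via a condition forcing boundedness is just a repackaging of the paper's direct density argument). The one loose end you flag, the reduction of arbitrary set-sized languages at the exact level $m$, is handled in the paper by citing Brooke-Taylor's Lemma~23 rather than Lemma~\ref{lem:finite-language-reduction}; that lemma by itself only yields some possibly larger level $N$, so it would not give the exact level $m$.
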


\begin{proof}
Over $\ZFC$, $\VP(\Pi_m)$ is equivalent to the unboundedness of the cardinals satisfying $K_m$ \cite[Theorem~4.3(2), Corollary~4.7, Theorem~4.11, and the paragraph following Theorem~4.12]{BagariaCn}; arbitrary set-sized languages reduce to the usual coding under Choice \cite[Lemma~23]{BrookeTaylor}.  Suppose first that $\VP(\Pi_m)$ is forced, and fix $p$ and $\eta$.  Choose a generic $G$ containing $p$.  Some $\kappa>\eta$ in $M[G]$ satisfies $K_m$, and the preservation of ordinals gives $\kappa\in M$.  By the assumed truth lemma and directedness of $G$, some $q\leq p$ forces $K_m(\check\kappa)$.

Conversely, for each $\eta$, the conditions forcing a $K_m$-cardinal above $\eta$ are dense, so every generic extension satisfies $\VP(\Pi_m)$.  For each instance, the hypotheses that a generic can be chosen through every condition and that the truth lemma holds imply that the conditions forcing it are dense.  Hence $1_{\mathbb P}$ forces every instance of $\VP(\Pi_m)$.
\end{proof}
\endgroup

\begin{question}\label{q:strong-restoration}
{For every countable first-order model $M\models\ZF+\VP$, do there exist a class part $\mathcal X$ over $M$, a class forcing $\mathbb P\in\mathcal X$, and an externally $(M,\mathcal X)$-generic filter $G\subseteq\mathbb P$ such that the quotient-of-names extension satisfies $M[G]\models\ZFC+\VP$?}
\end{question}

\begin{question}\label{q:selective-destruction}
{For each standard $n\geq1$, is it relatively consistent with $\ZFC+\VP(\Pi_{n+1})$ that a class forcing, definable in the language of set theory from set parameters, preserves $\ZFC$ and $\VP(\Pi_n)$ but destroys $\VP(\Pi_{n+1})$?}
\end{question}

Two related results bear on Question~\ref{q:selective-destruction}.  {For $n\geq2$, suitable $\Delta_2$-definable class iterations preserve $\VP(\Pi_n)$ over models of $\VP(\Pi_{n+1})$ \cite[Theorem~9.6]{BagariaPoveda}.}  {On the other hand, Hamkins shows that the first-order scheme need not imply Vop\v{e}nka's principle for arbitrary classes \cite[Theorem~11]{Hamkins}.}

Blass introduced the axiom of small violations of Choice, $\mathsf{SVC}$.  We use his characterization that, over $\ZF$, $\mathsf{SVC}$ is equivalent to the existence of a set forcing which forces $\AC$ \cite{Blass}; see also \cite[Section~3]{KaragilaDC}.

\begin{proposition}\label{prop:SVC-restoration}
Let $M\models\ZF+\VP+\mathsf{SVC}$.  There is a set forcing $\mathbb P\in M$ such that every $M$-generic extension by $\mathbb P$ satisfies $\ZFC+\VP$.  In particular, if $M$ is countable, Question~\ref{q:strong-restoration} has a positive answer for $M$.
\end{proposition}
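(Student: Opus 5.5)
The plan is to use Blass's characterization to obtain a set forcing $\mathbb P\in M$ that forces $\AC$, and then to show that this forcing also preserves $\VP$.

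The key observation is that an ordinary generic extension is a symmetric extension. Take the trivial symmetric system $\mathscr S=\langle\mathbb P,\{\mathrm{id}\},\{\{\mathrm{id}\}\}\rangle$. Its group is trivial and its filter is the singleton of the trivial subgroup. This is a normal filter, and every name has stabilizer $\{\mathrm{id}\}\in\mathcal F$. Hence $\HS$ is the class of all $\mathbb P$-names. So $\HS^G=M[G]$ for every $M$-generic $G$, and the symmetric forcing relation is the ordinary one. The system is set-sized because $\mathbb P$ is a set in $M$.

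Theorem~\ref{thm:preservation} then gives $M[G]\models\ZF+\VP$, with $\VP$ in the arbitrary-language sense. The choice of $\mathbb P$ gives $M[G]\models\AC$. Together these yield $M[G]\models\ZFC+\VP$, which is the first claim.

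For the consequence about Question~\ref{q:strong-restoration}, let $M$ be countable. Since $\mathbb P\in M$ has only countably many dense subsets lying in $M$, an $M$-generic filter $G$ exists through any condition. Take $\mathcal X$ to be the definable classes of $M$, or any class part containing $\mathbb P$. A set forcing is a particular class forcing, and the quotient-of-names extension coincides with the usual $M[G]$. So this $\mathcal X$, $\mathbb P$ and $G$ witness a positive answer for $M$.

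The only step needing care is the appeal to Blass. The equivalence ``$\mathsf{SVC}$ iff some set forcing forces $\AC$'' must hold over $\ZF$ inside $M$, and $\mathbb P$ must force $\AC$ with $1_{\mathbb P}$, so that every generic extension satisfies Choice. If the characterization only gives some condition forcing $\AC$, replace $\mathbb P$ by the cone below that condition. This is harmless, because Theorem~\ref{thm:preservation} applies to any set-sized system.
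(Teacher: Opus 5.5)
Your proposal is correct and follows the paper's own argument. You choose, via Blass's characterization, a set forcing $\mathbb P$ with $1_{\mathbb P}\Vdash\AC$, and then apply the trivial-symmetry case of Theorem~\ref{thm:preservation} to preserve $\VP$. Your additional details spell out what the paper leaves implicit: that the trivial system makes $\HS$ the class of all names, and that generics exist over countable $M$.
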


\begin{proof}
By Blass's characterization, choose $\mathbb P\in M$ with $1_{\mathbb P}\Vdash\AC$.  Set forcing preserves $\ZF$, and the trivial-symmetry case of Theorem~\ref{thm:preservation} preserves $\VP$.  Thus $1_{\mathbb P}\Vdash\ZFC+\VP$.
\end{proof}

\begin{lemma}\label{lem:wellorderable-forcing}
Let $\mathbb P$ be a well-orderable set forcing, $X$ a ground-model set, and suppose that for some ordinal $\kappa$ and name $\dot f$,
\[
1_{\mathbb P}\Vdash\dot f:\check\kappa\twoheadrightarrow\check X.
\]
Then $X$ is well-orderable in the ground model.
\end{lemma}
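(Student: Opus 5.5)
The plan is to use the well-order of $\mathbb P$ and of the ordinal $\kappa$ to define, in the ground model, an injection of $X$ into an ordinal. Fix a well-order $\lhd$ of $\mathbb P$. For each $x\in X$, the statement $1_{\mathbb P}\Vdash\dot f:\check\kappa\twoheadrightarrow\check X$ gives $1_{\mathbb P}\Vdash\exists\xi<\check\kappa\,\dot f(\xi)=\check x$. Hence the set
\[
D_x=\{\langle p,\xi\rangle\in\mathbb P\times\kappa: p\Vdash\dot f(\check\xi)=\check x\}
\]
is nonempty, and indeed its first projection is dense. By the definability of the forcing relation, $D_x$ is a set in the ground model, and $x\mapsto D_x$ is definable.

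Next I will extract an injection. Well-order $\mathbb P\times\kappa$ lexicographically using $\lhd$ and the ordinal order, and let $g(x)$ be the least element of $D_x$. It remains to check that $g$ is injective. Suppose $g(x)=g(y)=\langle p,\xi\rangle$. Then $p$ forces both $\dot f(\check\xi)=\check x$ and $\dot f(\check\xi)=\check y$. Since $\dot f$ is forced to be a function, $p\Vdash\check x=\check y$, so $x=y$ by absoluteness for check names.

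Thus $g:X\to\mathbb P\times\kappa$ is an injection into a well-ordered set, and pulling back the lexicographic order gives a well-order of $X$ in the ground model. I would in fact only need $D_x\neq\varnothing$, since any $\langle p,\xi\rangle\in D_x$ already determines $x$. The only step needing care is the definability of $\Vdash$ for the formula $\dot f(\check\xi)=\check x$ over $\ZF$, which the paper already uses. There is no real obstacle beyond that, and no Choice is used: all selections are made by the fixed well-order.
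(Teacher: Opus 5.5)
Your proof is correct and is essentially the paper's argument. You fix a well-order of $\mathbb P\times\kappa$, send each $x\in X$ to the least pair $\langle p,\xi\rangle$ with $p\Vdash\dot f(\check\xi)=\check x$ (nonempty because $1_{\mathbb P}$ forces surjectivity), and get injectivity because $\dot f$ is forced to be a function.
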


\begin{proof}
Fix a well-order of $\mathbb P\times\kappa$.  For $x\in X$, let $d(x)$ be the least pair $(p,\alpha)$ such that $p\Vdash\dot f(\check\alpha)=\check x$.  Since $1_{\mathbb P}$ forces $\dot f$ to be surjective, Replacement yields a function $d:X\longrightarrow\mathbb P\times\kappa$.
If $d(x)=d(y)=(p,\alpha)$, then $p$ forces both $\dot f(\check\alpha)=\check x$ and $\dot f(\check\alpha)=\check y$, so $x=y$.  Thus $d$ is injective, and hence $X$ is well-orderable.
\end{proof}

\begin{corollary}\label{cor:wellorderable-no-choice-restoration}
A well-orderable set forcing cannot restore Choice over a model of $\neg\AC$.
\end{corollary}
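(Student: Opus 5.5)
The plan is to argue by contradiction using Lemma~\ref{lem:wellorderable-forcing}. Suppose $V\models\ZF+\neg\AC$, let $\mathbb P\in V$ be a well-orderable set forcing, and suppose some $V$-generic extension $V[G]$ satisfies $\AC$; we want to deduce that $V$ satisfies $\AC$.

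First, strengthen ``some extension satisfies $\AC$'' to a statement forced by a condition. By the truth lemma, some $p\in G$ forces $\AC$. Restrict to the cone $\mathbb P\restr p=\{q\in\mathbb P:q\leq p\}$. This cone is again a well-orderable forcing, its largest condition is $p$, and $p$ forces $\AC$ there. So we may assume without loss of generality that $1_{\mathbb P}\Vdash\AC$.

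Next, fix an arbitrary set $X\in V$; we show $X$ is well-orderable in $V$. Since $1_{\mathbb P}$ forces $\AC$, it forces that $\check X$ is the surjective image of some ordinal. The ordinal may depend on the generic, so bound it uniformly. Every extension satisfies $|X|\leq|\mathcal P(X)|$, and any well-order of $X$ has order type below Hartogs-type bounds computable in $V$. Concretely, the order type of a well-order of $X$ is determined by a subset of $X\times X$, and $\mathcal P(X\times X)^{V[G]}$ has only set many names up to equivalence in $V$. So in $V$ we can find an ordinal $\kappa$, for instance $\kappa$ a bound given by Replacement over nice names for subsets of $\check X\times\check X$, such that $1_{\mathbb P}\Vdash\exists f\,(f:\check\kappa\twoheadrightarrow\check X)$. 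Mapping surplus elements of $\kappa$ to a fixed element of $X$ when $X\ne\varnothing$ makes the surjection total on $\kappa$.

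Then apply the maximum principle in the form available without choice. Because $\mathbb P$ is well-orderable, we can pick names by least witness along the well-order, or equivalently mix over a maximal antichain chosen using that well-order. This yields a single name $\dot f$ with $1_{\mathbb P}\Vdash\dot f:\check\kappa\twoheadrightarrow\check X$. Lemma~\ref{lem:wellorderable-forcing} then gives that $X$ is well-orderable in $V$. As $X$ was arbitrary, $V\models\AC$, which is the desired contradiction.

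I expect the main obstacle to be this use of the maximum principle in $\ZF$, which normally needs choice to select witnesses. The plan is to avoid it entirely by applying the proof of Lemma~\ref{lem:wellorderable-forcing} directly. For $x\in X$, let $d(x)$ be the least triple $(q,\alpha,\dot g)$ such that $q\Vdash \dot g(\check\alpha)=\check x$ and $q\Vdash\dot g:\check\kappa\to\check X$, where $\dot g$ ranges over a fixed set of nice names. To make the names well-orderable as well, restrict $\dot g$ to nice names for subsets of $\check\kappa\times\check X$ that are built only from $\check\kappa\times\check X$ and antichains of $\mathbb P$. A nice name of this form is determined by a function from $\kappa\times X$ to antichains, which is not obviously well-orderable. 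To handle that, first fix a single name $\dot g$ and a condition $q$ forcing that $\dot g$ is a surjection. This choice is a single instance of existential instantiation, so it is legitimate without choice. Then run the least-pair argument below $q$ on $\mathbb P\times\kappa$ exactly as in Lemma~\ref{lem:wellorderable-forcing}; the resulting map $d$ is injective.
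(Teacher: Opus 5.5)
Your final argument, which fixes one name $\dot g$ and one condition $q$ with $q\Vdash\dot g:\check\kappa\twoheadrightarrow\check X$ and then applies Lemma~\ref{lem:wellorderable-forcing} to $\mathbb P\restr q$, is essentially the paper's proof; the paper obtains $\kappa$, the name and the condition directly from a surjection in $V[G]$ via the truth lemma. The reduction to $1_{\mathbb P}\Vdash\AC$ and the uniform bound on $\kappa$ are therefore unnecessary, and you should drop the intermediate ``maximum principle'' paragraph, because a well-order of $\mathbb P$ does not well-order the names and so cannot justify choosing names by least witness.
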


\begin{proof}
Let a generic extension by a well-orderable $\mathbb P$ satisfy $\AC$, and fix a ground-model set $X$.  In the extension, choose a surjection $f:\kappa\twoheadrightarrow X$ from an ordinal $\kappa$.  By the truth lemma, there are a condition $p\in\mathbb P$ and a name $\dot f$ such that $p\Vdash\dot f:\check\kappa\twoheadrightarrow\check X$.  After restricting $\dot f$ below $p$, Lemma~\ref{lem:wellorderable-forcing} applied to the well-orderable forcing $\mathbb P\mathbin{\restriction}p$ shows that $X$ is well-orderable in the ground model.  Thus every ground-model set is well-orderable, and the ground model satisfies $\AC$.
\end{proof}

\begin{proposition}\label{prop:countability-essential}
Assume $\Con(\ZF+\VP)$.  There is a model $N$ of cardinality $\aleph_1$ satisfying $\ZF+\VP+\neg\AC$ which has no proper generic extension satisfying $\ZF$.  Consequently, the analogue of Question~\ref{q:strong-restoration} for all set-sized models is false.
\end{proposition}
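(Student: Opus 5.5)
The plan is to take $N$ to be an uncountable elementary submodel, of size $\aleph_1$, of a countable model, built so that every set forcing in $N$ already has all its generics inside the ambient structure. Concretely, I would start from a countable model $\mathfrak M\models\ZF+\VP+\neg\AC$; this exists by Corollary~\ref{cor:choice}. I would then pass to an $\omega_1$-like elementary extension. By a Keisler-style $\omega_1$-like construction (MacDowell--Specker or Keisler's theorem for countable models of theories that prove Replacement), there is $N\equiv\mathfrak M$ of cardinality $\aleph_1$ in which every set of $N$ has only countably many $N$-elements. Elementary equivalence gives $N\models\ZF+\VP+\neg\AC$, since these are first-order schemes.

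The intended meaning of ``no proper generic extension satisfying $\ZF$'' is that for every forcing $\mathbb P\in N$ that is nontrivial in $N$ (not atomic below every condition), there is no filter $G$ that is $N$-generic, that is, meets every dense set of $\mathbb P$ lying in $N$. The key step is to make $N$ so that, for each nonatomic $\mathbb P\in N$, the dense subsets of $\mathbb P$ in $N$ cannot all be met. I would arrange this by building $N=\bigcup_{\xi<\omega_1}N_\xi$ as a continuous elementary chain of countable models. At each stage I would kill all candidate generics: whenever $\mathbb P\in N_\xi$ and $G$ is a filter through $\mathbb P^{N_\xi}$ that is $N_\xi$-generic, the next model $N_{\xi+1}$ should contain a dense set avoided by $G$. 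A filter is then $N$-generic only if it is $N_\xi$-generic for a club of $\xi$, and it will be destroyed at the next successor stage.

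The obstacle is that there are continuum many generics over a countable $N_\xi$, while we may add only countably many new dense sets per stage. I would fix this with a diamond-style guessing or bookkeeping argument: use $\diamondsuit_{\omega_1}$, or, to stay absolute, an end-extension argument in which $N_{\xi+1}$ adds a new element to the extension of $\mathbb P$ at the top. Either way, the new dense set must omit the guessed filter, and a filter $G$ that is $N$-generic is guessed at stationarily many $\xi$ where $G\cap N_\xi$ is $N_\xi$-generic. I expect this to be the hardest step. In particular, showing that $N_{\xi+1}\succ N_\xi$ can contain a dense set avoiding a given generic $G_\xi$ uses that $\mathbb P$ is nonatomic: by compactness, realize a type asserting that $D$ is a dense set disjoint from each $p\in G_\xi$, which is finitely satisfiable because finitely many conditions of a filter have a common extension $q$, and density below an incompatible branch gives a dense set avoiding $q$. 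Since diamond may not be available, I would instead carry it out inside a countable model of $\ZFC$ plus $\diamondsuit$, which costs only absoluteness of consistency.

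Finally, the ``consequently'' clause follows. $N$ is a set-sized model of $\ZF+\VP$, and any positive answer to the analogue of Question~\ref{q:strong-restoration} would require a proper generic extension to restore $\AC$, since $N\models\neg\AC$. This is impossible: class forcings restricted to set-sized stages reduce to the same genericity failure, and the trivial extension $N$ itself fails $\AC$.
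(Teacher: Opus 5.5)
\textbf{There is a genuine gap: the generic-killing construction, which is the whole content here, does not work as described.}

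\emph{The construction contradicts its own first step.} You begin with an $\omega_1$-like $N$ in which every set has only countably many $N$-elements. In such an $N$, for every $\mathbb P\in N$ the set $\mathcal P(\mathbb P)^N$ also has only countably many $N$-elements. So Rasiowa--Sikorski gives an $N$-generic filter through any condition of an atomless $\mathbb P$. That filter is not in $N$, and $N[G]$ is a proper generic extension satisfying $\ZF$, which is exactly what you want to rule out. Your later chain, which keeps adding new dense subsets of a fixed $\mathbb P$, is therefore not compatible with that first step.

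\emph{The killing step does not kill.} Realizing the type ``$D$ is dense and $p\notin D$ for $p\in G\cap N_\xi$'' only makes $D$ avoid the countably many \emph{old} conditions of the guessed filter. But $D^{N_{\xi+1}}$, and later $D^N$, may contain new conditions, and an $N$-generic $G$ on $\mathbb P^N$ may contain conditions added after stage $\xi$. Nothing prevents $G$ from meeting $D$ at one of them. What you actually need is that every member of $D^N$ be incompatible with some member of $G\cap N_\xi$. That is, the type $\{x\in D\}\cup\{\text{$x$ is compatible with $p$}:p\in G\cap N_\xi\}$ must be omitted at every later stage, for all guessed filters simultaneously and through limit stages. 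This omitting-types bookkeeping is the hard part, and the proposal does not address it.

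\emph{Working inside a model of $\diamondsuit$ does not prove the statement.} If $W\models\ZFC+\diamondsuit$ is countable, the resulting $N$ has size $\aleph_1^W$ but is countable in $V$. There it has Cohen-generic filters and hence proper generic extensions satisfying $\ZF$. The proposition asserts the existence of such an $N$, not the consistency of its existence. So you need a $\ZFC$ construction or a genuine absoluteness argument, not ``absoluteness of consistency.''

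\emph{Class forcing is not handled.} Your reduction of class forcings to ``set-sized stages'' is unjustified, and the ``consequently'' clause concerns class-generic extensions.

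The paper avoids all of this by citing Enayat's theorem \cite[Theorem~5.18]{Enayat}. The countable model $M_0\models\ZF+\VP+\neg\AC$ has an elementary extension $N$ of size $\aleph_1$ with no proper \emph{end extension} satisfying $\ZF$. The only remaining observation is that any quotient-of-names extension, by set or class forcing, is an end extension of $N$: the atomic forcing recursion shows that old sets acquire no new members. Hence a proper such extension satisfying $\ZF$ cannot exist. This handles the class-forcing analogue of Question~\ref{q:strong-restoration} uniformly, with no need to kill generic filters one at a time.
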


\begin{proof}
By Corollary~\ref{cor:choice}, take a countable $M_0\models\ZF+\VP+\neg\AC$.  Enayat's theorem gives an elementary extension $M_0\prec N$ of cardinality $\aleph_1$ having no proper end extension to a model of $\ZF$ \cite[Theorem~5.18]{Enayat}.  Elementarity gives $N\models\ZF+\VP+\neg\AC$.  {The atomic forcing recursion shows that old sets acquire no new members in a quotient-of-names extension.  Hence any such extension that adds a set is both proper and an end extension.}  Therefore no such extension of $N$ can satisfy $\ZF$.
\end{proof}
\endgroup

\section*{Acknowledgements}

{The authors used OpenAI's ChatGPT 5.6 Sol and Anthropic's Claude Fable 5 for proof development, literature reviews, editorial revisions, and LaTeX preparation.  The authors retain full responsibility for the mathematical arguments and claims, the accuracy, and the final text.}

\end{document}